\newtheorem{theorem}{Theorem}[section]
\newtheorem{definition}[theorem]{Definition}
\newtheorem{example}[theorem]{Example}
\newtheorem{lemma}[theorem]{Lemma}
\newtheorem{proposition}[theorem]{Proposition}
\newtheorem{conjecture}[theorem]{Conjecture}
\newenvironment{proof}[1][Proof]{\noindent \emph{#1.} }{\hfill \ 
\rule{0.5em}{0.5em}}
\makeatletter\@addtoreset{equation}{section}\makeatother
\makeatletter\@addtoreset{figure}{section}\makeatother
\makeatletter\@addtoreset{table}{section}\makeatother
\def\rank{\mathop{\mathrm{rank}}\nolimits}
\DeclareMathOperator*{\diag}{diag}
\newcommand{\MTran}[1]{{#1}'}
\def\blszh{0.3}
\def\blszv{0.3}
\def\lnlen{0.6}
\def\genblock[#1,#2,#3]{
\coordinate (z) at ($(z)-(#2*\blszh,#3*\blszv)$);
\draw[semithick,fill=white] ($(z)+(\blszh,-\blszv)$) -- ($(z)+(-\blszh,-\blszv)$) -- ($(z)+(-\blszh,\blszv)$);
\draw[semithick,fill=white] ($(z)+(\blszh,-\blszv)$) -- ($(z)+(\blszh,\blszv)$) -- ($(z)+(-\blszh,\blszv)$);
\node at (z) {#1};
}
\def\activeblock[#1,#2,#3]{
\coordinate (z) at ($(z)-(#2*\blszh,#3*\blszv)$);
\draw[very thick,color=red,fill=white] ($(z)+(\blszh,-\blszv)$) -- ($(z)+(-\blszh,-\blszv)$) -- ($(z)+(-\blszh,\blszv)$);
\draw[very thick,color=red,fill=white] ($(z)+(\blszh,-\blszv)$) -- ($(z)+(\blszh,\blszv)$) -- ($(z)+(-\blszh,\blszv)$);
\node at (z) {#1};
}
\def\leftorthblock[#1,#2,#3]{
\coordinate (z) at ($(z)-(#2*\blszh,#3*\blszv)$);
\draw[semithick,fill=blue!35] ($(z)+(\blszh,-\blszv)$) -- ($(z)+(-\blszh,-\blszv)$) -- ($(z)+(-\blszh,\blszv)$);
\draw[semithick,fill=white] ($(z)+(\blszh,-\blszv)$) -- ($(z)+(\blszh,\blszv)$) -- ($(z)+(-\blszh,\blszv)$);
\node at (z) {#1};
}
\def\rightorthblock[#1,#2,#3]{
\coordinate (z) at ($(z)-(#2*\blszh,#3*\blszv)$);
\draw[semithick,fill=white] ($(z)+(\blszh,-\blszv)$) -- ($(z)+(-\blszh,-\blszv)$) -- ($(z)+(-\blszh,\blszv)$);
\draw[semithick,fill=blue!35] ($(z)+(\blszh,-\blszv)$) -- ($(z)+(\blszh,\blszv)$) -- ($(z)+(-\blszh,\blszv)$);
\node at (z) {#1};
}
\def\rdorthblock[#1,#2,#3]{
\coordinate (z) at ($(z)-(#2*\blszh,#3*\blszv)$);
\draw[semithick,fill=white] ($(z)+(-\blszh,-\blszv)$) -- ($(z)+(-\blszh,\blszv)$) -- ($(z)+(\blszh,\blszv)$);
\draw[semithick,fill=blue!35] ($(z)+(-\blszh,-\blszv)$) -- ($(z)+(\blszh,-\blszv)$) -- ($(z)+(\blszh,\blszv)$);
\node at (z) {#1};
}
\def\luorthblock[#1,#2,#3]{
\coordinate (z) at ($(z)-(#2*\blszh,#3*\blszv)$);
\draw[semithick,fill=blue!35] ($(z)+(-\blszh,-\blszv)$) -- ($(z)+(-\blszh,\blszv)$) -- ($(z)+(\blszh,\blszv)$);
\draw[semithick,fill=white] ($(z)+(\blszh,\blszv)$) -- ($(z)+(\blszh,-\blszv)$) -- ($(z)+(-\blszh,-\blszv)$);
\node at (z) {#1};
}
\def\leftline[#1,#2]{
\path[sloped,semithick,-] ($(z)-(\blszh,-#2*\blszv)-(\lnlen,0)$)
edge [out= 0, in= 180] node[above,color=black] {#1}
($(z)-(\blszh,-#2*\blszv)$);
\coordinate (znew) at ($(z)-(\blszh,-#2*\blszv)-(\lnlen,0)$);
}
\def\rightline[#1,#2]{
\path[sloped,semithick,-] ($(z)+(\blszh,#2*\blszv)$)
edge [out= 0, in= 180] node[above,color=black] {#1}
($(z)+(\blszh,#2*\blszv)+(\lnlen,0)$);
\coordinate (znew) at ($(z)+(\blszh,#2*\blszv)+(\lnlen,0)$);
}
\def\bottomline[#1,#2]{
\path[semithick,-] ($(z)-(-#2*\blszh,\blszv)-(0,\lnlen)$)
edge [out= 90, in= -90] node[left,color=black] {#1}
($(z)-(-#2*\blszh,\blszv)$);
\coordinate (znew) at ($(z)-(-#2*\blszh,\blszv)-(0,\lnlen)$);
}
\def\topline[#1,#2]{
\path[semithick,-] ($(z)+(#2*\blszh,\blszv)$)
edge [out= 90, in= -90] node[left,color=black] {#1}
($(z)+(#2*\blszh,\blszv)+(0,\lnlen)$);
\coordinate (znew) at ($(z)+(#2*\blszh,\blszv)+(0,\lnlen)$);
}
\title{Tensor Numerical Methods for High-dimensional PDEs: 
Basic Theory and Initial Applications
}
    \author{
BORIS N. KHOROMSKIJ\thanks{Max-Planck-Institute for
        Mathematics in the Sciences, Inselstr.~22-26, D-04103 Leipzig,
        Germany ({\tt bokh@mis.mpg.de, http://personal-homepages.mis.mpg.de/bokh/}).}
}
\begin{document}
\date{}

\maketitle

\begin{abstract} 
We present a brief survey on the  modern tensor numerical methods 
for multidimensional stationary and time-dependent partial differential equations (PDEs).
The guiding principle of the tensor approach
is the rank-structured separable approximation of multivariate functions and 
operators represented on a grid. 
Recently, the traditional Tucker, canonical, and matrix product states (tensor train) 
tensor models have been applied to the grid-based electronic structure calculations, 
to parametric PDEs, and to dynamical equations arising in scientific computing.  
The essential progress is based on the quantics tensor approximation  
method  proved to be capable to represent (approximate) function related $d$-dimensional 
data arrays  of size $N^d$ with log-volume complexity, $O(d \log N)$.
Combined with the traditional numerical schemes, these novel tools establish a 
new promising approach 
for solving multidimensional integral and differential equations using low-parametric
rank-structured tensor formats.
As the main example, we describe the  grid-based tensor numerical approach
for solving the 3D nonlinear Hartree-Fock eigenvalue problem, that was the starting point for 
the developments of tensor-structured numerical methods
for large-scale computations in solving real-life multidimensional problems.
We also discuss a new method for the fast 3D lattice summation of 
electrostatic  potentials by assembled low-rank tensor approximation capable to 
treat the potential sum over millions of atoms in few seconds.
We address new results on tensor approximation of the 
dynamical Fokker-Planck and master equations in many dimensions up to $d=20$.
Numerical tests demonstrate the benefits of the rank-structured tensor approximation 
on the aforementioned examples of multidimensional PDEs. In particular, 
the use of grid-based tensor representations in the reduced basis of atomics orbitals 
yields an accurate solution of the Hartree-Fock equation on large $N\times N \times N$ grids
with a grid size of up to $N= 10^{5}$.
\end{abstract}

\section{Introduction}\label{sec:Introd}

In the recent years, the tensor numerical methods  were
recognized  as the basic tool to render  numerical simulations
in higher dimensions tractable. The guiding principle of the tensor numerical methods
is the reduction of the computational process onto a low parametric rank-structured manifold 
by using an approximation of multivariate functions and operators, that relies
on a certain separation of variables. Possible applications
of tensor numerical methods include high-dimensional problems arising in material- and 
bio-sciences, computational quantum chemistry, stochastic modeling and 
uncertainty quantification, dynamical systems, machine learning, financial mathematics, etc. 

In the following discussion a tensor of order $d$ and mode size $N$, or briefly
$N$-$d$ tensor, is considered as a function
on a $d$-fold product index set, $\textsf{\textbf{A}}:I^{\otimes d}\to \mathbb{R}$ with
$I^{\otimes d}=I\times \cdot \cdot\cdot \times I$, and $I=\{1,...,N\}$. 
In the traditional grid-based numerical techniques for $d$-dimensional PDEs  
the parameter $N$ can be associated with the univariate grid size.
The representation of arising $N$-$d$ tensors (coefficient vectors)   
requires a storage size that is exponential in $d$, $N^{d}$,
which causes severe computational difficulties, often called  
the ``curse of dimensionality`` \cite{bellman-dyn-program-1957}.



A class of methods which lead to linear scaling in the dimension are
distinctly linked with the principle of separation of variables.
The multi-linear tensor decompositions based on the Tucker and canonical models 
have long since been used  as a tool
for data processing in the computer science community (e. g. PCA type methods), 
and applied to multidimensional experimental data in chemometrics,
psychometrics and in  signal processing, see a comprehensive bibliography in \cite{KoldaB:07}.
The remarkable approximating properties of the Tucker and canonical decomposition
for wide classes of function related tensors  were revealed in \cite{Khor:06,KhBoVe1:06},
promoting its usage as a tool for the numerical treatment of multidimensional problems in
numerical analysis. 
An introductory description of traditional tensor formats 
with the focus on tensor-structured numerical methods for the calculation of multidimensional
functions and operators is presented in \cite{VeKh_Diss:10,KhorSurv:10,KhorLecZuri2010:10}.
Moreover, the canonical tensor representations obtained by $\mbox{sinc}$ approximations
on a class of analytic multivariate functions
have been proven to provide fast exponential convergence in the separation rank 
\cite{GaHaKh4:02,HaKhtens:04I,Khor:06,Yserent:00}. 


First successes in the rank-structured tensor calculations  of
multivariate functions and operators in the Hartree-Fock equation
originated the grid-based tensor numerical methods in scientific computing
\cite{KhKh3:08,KhKhFl_Hart:09,VeKh_Diss:10,KhVBSchn_TEI:12,VeKhorMP2:13,VeBoKh:Ewald:14,VeKhorCorePeriod:14,ManbyCh:11}.
Combined with the matrix product states (MPS) techniques developed in the physics community,
\cite{White:93,Cirac_TC:04,WangTho:03,Vidal2003,PerVerWoCirac:07}, 
including its particular form, the tensor train (TT) format \cite{OsTy_TT:09,Osel-TT:09},
and with the newly developed quantized  tensor approximation of discretized 
functions \cite{KhQuant:09} and operators \cite{Osel-TT-LOG:09},
these methods boiled up to a powerful tool for the numerical analysis 
in higher dimensions. Concerning computational quantum chemistry, 
the real space numerical methods combined with FEM or plane waves approximations 
have become attractive in (post) Hartree-Fock and DFT calculations as the possible alternative
to traditional approaches 
\cite{LaPySu:1986,HaFaYaBeyl:04,BiVale:11,CanEhrMad:2012,Ortn:ArX,BGKh:12,Frediani:13,RahOsel:13}. 

Literature surveys on the most frequently used tensor formats can be found in 
\cite{KhorSurv:10,Scholl:11,Hack_Book12,GraKresTo:13}. 
In addition, methods of multilinear algebra and nonlinear tensor approximation 
have been discussed, see
\cite{AbMaSe:Book08,KhKh3:08,OsTy_TT:09,KhorLecZuri2010:10,KhQuant:09,GrasHTUCK:09}
and references therein.
The numerical cost of basic multilinear algebra operations on formatted
$N$-$d$ tensors usually
scales linearly in $d$, but could be polynomial in the mode size $N$. 
This leads to essential limitations, since the high precision numerical
simulations might require $N^{\otimes d}$-grids with large mode size $N\sim 10^4,...,10^6$.

The new paradigm of the quantics-TT (QTT) approximation for a class of discretized functions, 
as introduced and rigorously justified in  \cite{KhQuant_P:09,KhQuant:09}, leads to a data compression 
with  $O(\log N)$ complexity scaling.
In this way the QTT approximation applies to the quantized image of the target discrete function,
obtained by its isometric folding transformation to
the higher dimensional {\it quantized tensor space}. 
For example, a vector of size $N=q^L$ ($q=2,3,...$) can be successively 
reshaped by a $q$-adic folding to an $L$-fold tensor in $\bigotimes_{j=1}^L \mathbb{R}^{q}$
of the irreducible mode size $n=q$ (quantum of information), then
the  low-rank approximation in the canonical or TT  
formats can be applied consequently. 

The principal question arises: how can the folding of a vector to a higher dimensional 
tensor in $\bigotimes_{j=1}^L \mathbb{R}^{q}$
lead  to the essential data compression using representations in low-rank tensor formats
and, hence, to the efficient $O(\log N)$-approximation method? 
The constructive answer is formulated by the QTT approximation theory  
proven for the basic classes of function related tensors: 
The TT-rank of quantized exponential, trigonometric, and polynomial $N$-vectors 
remains constant, that is independent of $N$  \cite{KhQuant_P:09,KhQuant:09}.
As a simple corollary, it was shown that the QTT approximation provides exponential
convergence in the QTT-rank for a class of  
analytic function related $N$-vectors and  $N$-$d$ tensors,
which are well representable in terms of the aforementioned functional classes.  
These beneficial approximation features allow to understand why the computation in 
quantized tensor spaces may lead to a log-volume complexity, $O(d \log N)$.
Further QTT approximation results for functional  vectors can be found in 
\cite{KhOsel_3_DMRG:10,Gras_Quant:10,OselQTTfunc:10,DoKhOsel:11,OselTyr_WT:11,DoKh_QTTT:12,VeKhSchn_QTTHF:11,KazReichSchw:13,VeBoKh:Ewald:14}.

The important point for solving PDEs is that typical integral and elliptic differential operators
also admit the low QTT-rank representation.
It was found in \cite{Osel-TT-LOG:09} by numerical tests 
that in some cases the dyadic reshaping of an $2^L \times 2^L$ matrix  leads to a small
TT-rank of the reshaped operator.
An explicit low-rank QTT representation of the matrix exponential as well as  
the discrete quantized Laplacian and its 
inverse were derived in \cite{KhOsel_1_P:09,KhOsel_1:09,KazKhor_1:10}. 


We summarize that the idea of quantized tensor approximation in $d\log N$-complexity
allows a fast functional and operator calculus on very large discretization grids  
(practically unlimited grid size). This opens a way to the efficient computation  
of $d$-dimensional integrals with controllable accuracy, the representation of 
discrete elliptic operators and their inverse, as well as various linear and bilinear mappings
on tensors of order $d$, thus making the decisive step toward tractable numerical methods
for multi-dimensional PDEs.


As a matter of fact the concept of low-rank separable representation of multidimensional functions 
and operators in combination with properly modified traditional numerical schemes,
and with well developed algebraic tools for formatted tensors
has penetrated into the new branch of numerical analysis 
in the form of tensor-structured numerical methods (TNM) for solving multi-dimensional PDEs.
The main ingredients of TNMs for PDEs include:
\begin{itemize}
\item FEM or spectral discretization of $d$-dimensional PDEs in tensor product Hilbert spaces
\item Numerical multilinear algebra of rank-structured tensors
\item Approximate grid-based tensor calculus of multivariate functions and operators in low-parametric 
tensor formats 
\item Quantized tensor approximation
\item Tensor truncated iterative methods for solving discrete
systems of stationary and  time dependent $d$-dimensional PDE. 
\end{itemize}

This survey is merely an attempt 
to outline the main results on TNMs for multidimensional PDEs 
obtained  in the recent years by the author and in collaborations,
which have been presented at the Summer School CEMRACS-2013, 
``Modeling and simulation of complex systems: Stochastic and deterministic approaches'',  
CIRM, Marseille-Luminy, France, 22-26.07.2013.
We mainly focus on the recently developed grid-based tensor numerical methods
for solving the 3D nonlinear Hartree-Fock eigenvalue problem,
including the new method of fast lattice summation of electrostatic  potentials
by the assembled low-rank tensor approximation, and also discuss 
the tensor method of simultaneous $(x,t)$-approximation in TT/QTT formats for the 
dynamical Fokker-Planck and master equations in many dimensions up to $d=20$.

Several important applications of tensor numerical methods
remain beyond the scope of this review. They include  
multidimensional preconditioning \cite{KhorCA:09,AndrTobl:12} and solution of linear systems 
\cite{tobkress-krylov-2010,DoOs-dmrg-solve-2011,USchm_ALS:12,DoSav_AMEn:13,ds-amr1-2013,benner-lowrank-lyapunov-2013}, 
 parametric/stochastic PDEs 
\cite{MattKee:05,KhSchw1:09,KhOsel_2_SPDE:10,DolKhLitMat:14,benner-spde-2013}, 
greedy algorithms 
\cite{BrisMaday-greedy_pdes-2009,Binev-conv_rate_greedy-2011,Cances-greedy-2011,Sueli-greedy_FP-2011},
 PGD model reduction methods \cite{Ammar-cme-2011,FalNouy:12,TamMaitNouy:12}, 
super-fast FFT, convolution and wavelet transforms \cite{DoKhSav:11,KazKhTyrt_Toepl:11,KhMiao:13}, 
tensor-product interpolation and cross approximation 
\cite{OsTy_TT2:09,bebe-aca-2011,so-dmrgi-2011proc,lars-htcross-2013}, 
integration of singular or highly oscillating functions \cite{KhSautVeit:11}, 
control problems \cite{stoll-lowrank-time-2013}, 
complexity theory \cite{BachDahm:13,DahmDevGrSul:14}, etc. 

The rest of the manuscript is structured as follows. Section \ref{sec:TS-formats} introduces the 
basic rank-structured tensor formats
with a focus on the quantized tensor representation of functions and operators.
Section \ref{sec:QTT-solvers} describes the main building blocks in the construction 
and justification of TNMs for the solution of the Hartree-Fock equation  
in {\it ab initio} electronic structure calculations, and 
for the tensor approximation of the real-time high-dimensional stochastic multiparticle 
dynamics governed by master equations. Section \ref{sec:Conclusion} concludes the paper
and outlines challenging problems for future research.

\section{Rank-structured tensor approximation}\label{sec:TS-formats}

In this section we present a short
description of the basic additive and multiplicative tensor formats. 
These formats allow low-parametric function and operator representations by a nonlinear 
mapping onto the rank-structured tensor manifolds.

\subsection{Basic tensor formats for representation of functions and operators}
\label{ssec:Tformat}

A tensor of order $d$, further called $N$-$d$ tensor, is defined as an element 
of finite dimensional {\it tensor-product Hilbert space} 
of the $d$-fold, $N_1\times ... \times N_d$ real/complex-valued arrays,
$\mathbb{W}_{\bf n}\equiv \mathbb{W}_{{\bf n},d}= \bigotimes_{\ell=1}^d X_\ell$,
where $X_\ell=\mathbb{R}^{N_\ell}$ or $X_\ell=\mathbb{C}^{N_\ell}$, and ${\bf n}=(N_1,...,N_d)$. 
An element of $\mathbb{W}_{\bf n}$  can be represented entrywise by
$$
\textsf{\textbf{A}}=[\textsf{A}(i_1,...,i_d)]\equiv [\textsf{A}_{i_1,...,i_d}]
\quad \mbox{with}\quad i_\ell\in I_\ell:=\{1,...,N_\ell\}.
$$
We confine ourselves to the case of real-valued tensors in 
$\mathbb{W}_{\bf n}=\mathbb{R}^{\cal I}$, ${\cal I}=I_1\times ... \times I_d$, 
though all the constructions can be extended to the case of complex-valued 
tensor-product Hilbert space, $\mathbb{W}_{{\bf n}}=\mathbb{C}^{\cal I}$.
The Euclidean scalar product, $\left\langle \cdot ,\cdot \right\rangle :
\mathbb{W}_{\bf n}\times \mathbb{W}_{\bf n}\to \mathbb{R}$,
is defined by
$$
\left\langle \textsf{\textbf{A}} , \textsf{\textbf{B}} \right\rangle:=
\sum_{{\bf i}\in {\cal I}} \textsf{A}({\bf i}) \textsf{B}({\bf i}), \quad
\textsf{\textbf{A}},\textsf{\textbf{B}}\in \mathbb{W}_{\bf n},
$$ 
that merely identifies tensors with multi-indexed Euclidean vectors.
Any particular tensor can be associated with a function of a discrete variable, 
$\textsf{\textbf{A}}: I_1\times ... \times I_d \to \mathbb{R}$.
For ease of presentation, we often consider the equal-size tensors,
i.e. $I=I_\ell=\{1,...,N\}$ ($\ell=1,...,d$) with the short notation ${\cal I}=I^{\otimes d}$.

The storage size for $N$-$d$ tensor scales exponentially in $d$, 
$\operatorname{dim}(\mathbb{W}_{{\bf n},d})=N^d$ (the "curse of dimensionality")
that makes the traditional numerical methods, 
characterized by linear complexity scaling in the discrete problem size, non-tractable
already for moderate $d$.
The tensor numerical methods are based on the idea of a low-rank separable tensor 
decomposition (approximation) applied 
to all discretized functions and operators describing the physical model governed, 
say, by partial differential equation (PDE).
In this way, the simplest separable elements are given by rank-$1$ tensors,
\[
 \textsf{\textbf{A}}= \bigotimes_{\ell=1}^d \textsf{A}^{(\ell)},
\quad \textsf{A}^{(\ell)}\in \mathbb{R}^{N_\ell},
\]
which can be stored with $d N$ numbers (parameters). 
 in the recent decades different classes of rank-structured tensor representations 
 (formats) have been introduced in the literature. 
 The important feature of such formats
is that the respective ``formatted`` elements are represented with a small number of
parameters that scales linearly in the dimension $d$. Each of these low-parametric 
formats include rank-$1$ tensors as the ''simplest`` elements. All these formats
can be viewed as multidimensional generalizations of the notion of a rank-$R$ matrix.

The basic commonly used separable representations of tensors are
described by the canonical and Tucker formats.
We say that an element $\textsf{\textbf{A}}\in \mathbb{W}_{\bf n}$ belongs to
the class of $R$-term canonical tensors if it has the representation
\begin{equation}\label{eqn:canTens}
\textsf{\textbf{A}}= \sum_{\alpha=1}^R \bigotimes_{\ell=1}^d \textsf{A}^{(\ell)}_\alpha,
\quad \textsf{A}^{(\ell)}_\alpha \in \mathbb{R}^{N_\ell},
\end{equation}
or in index notation
\begin{equation*}\label{par:can}
\textsf{A}(i_1,\ldots,i_d) = \sum_{\alpha=1}^R \textsf{A}^{(1)}_\alpha(i_1) \cdots 
\textsf{A}^{(d)}_\alpha(i_d),
\quad [\textsf{A}^{(\ell)}_\alpha(\cdot)] \in \mathbb{R}^{N_\ell}.
\end{equation*}
Now the storage cost is bounded by  $d R N$. For $d\geq 3$
computation of the canonical rank of the tensor $\textsf{\textbf{A}}$, i.e.
the minimal number $R$ in representation (\ref{par:can}) and the respective
decomposition, is an $N$-$P$ hard problem.
In the case $d=2$ the representation (\ref{eqn:canTens}) is merely a rank-$R$ matrix.

We say that
$\textsf{\textbf{A}}\in \mathbb{W}_{\bf n}$ belongs to the rank
$\mathbf{r} = [r_1,\ldots,r_d]$ Tucker format \cite{Tuck:66,DMV-SIAM:00} if there 
exists a representation
\begin{equation*}\label{par:tucker}
\textsf{A}(i_1,\ldots,i_d) = 
\sum\limits_{\alpha_1=1}^{r_1} \cdots \sum\limits_{\alpha_d =1}^{r_d}
\textsf{B}_{\alpha_1,\ldots,\alpha_d} \textsf{A}^{(1)}_{\alpha_1}(i_1) \cdots \textsf{A}^{(d)}_{\alpha_d}(i_d), 
\quad \textsf{A}^{(\ell)}_{\alpha_\ell}(\cdot)\in \mathbb{R}^{N_\ell}.
\end{equation*}
In this case the storage cost is bounded by $drN +r^d$, $r=\max r_\ell$, where the second term 
estimates the size of the Tucker core tensor 
$\textsf{\textbf{B}}=[\textsf{B}_{\alpha_1,\ldots,\alpha_d}]\in \mathbb{R}^{r_1\times ... \times r_d}$.
Without loss of generality, the set of Tucker vectors 
$\{\textsf{A}^{(\ell)}_{\alpha_\ell}\}$ with
$\textsf{A}^{(\ell)}_{\alpha_\ell} \in \mathbb{R}^{N_\ell}$
($\ell=1,...,r_\ell$) can be orthogonalized. 
In the case $d=2$ the orthogonal Tucker decomposition is equivalent to the singular 
value decomposition (SVD) of a rectangular matrix.

The important generalization to the Tucker 
representation is the two-level Tucker-canonical format \cite{Khor1:08,KhKh3:08} 
that consists of Tucker tensors with the core array $\textsf{\textbf{B}}$ represented in the 
low-rank canonical form.

The product-type representation of $d$th order tensors, which is
called   the matrix product states (MPS) decomposition in the physical literature,
was introduced and successfully applied in DMRG quantum computations
\cite{White:93,Vidal2003,Cirac_TC:04},
and, independently, in quantum molecular dynamics 
as the multilayer (ML) MCTDH methods \cite{WangTho:03,MeyGaWo_book:09,LubichBook:08}.
Representations by MPS type formats reduce the complexity of storage to $O(d r^2 N)$,
where $r$ is the maximal rank parameter. 

In the recent years the various versions of the MPS-type
tensor format were discussed and further investigated in mathematical literature 
including the hierarchical dimension splitting \cite{Khor:06}, 
the tensor train (TT) \cite{OsTy_TT:09,Osel-TT:09}, the tensor chain (TC) and
combined Tucker-TT \cite{KhQuant:09}, the QTT-Tucker \cite{DoKh_QTTT:12} formats,
as well as the  hierarchical Tucker  representation \cite{Hack_Book12} that
belongs to the class of ML-MCTDH methods \cite{WangTho:03}, or more generally tensor network 
states models.
The MPS-type tensor approximation was
proved by extensive numerics to be efficient in high-dimensional electronic/molecular structure
calculations, in molecular dynamics and in quantum information theory 
(see survey papers \cite{Cirac_TC:04,HuklWaSch:10,KhorSurv:10,Scholl:11}).

The TT format that is the particular case of MPS type factorization in the case of 
open boundary conditions, can be defined as follows.
For a given rank parameter ${\bf r}=(r_0,...,r_d)$, and the respective
index sets $J_\ell=\{1,...,r_\ell\}$ ($\ell=0,1,...,d$),
with the constraint $ J_0= J_d=\{1\}$ (i.e., $r_0=r_d$),
the rank-${\bf r}$ TT format contains all elements
$\textsf{\textbf{A}}=[\textsf{A}(i_1,...,i_d)]\in\mathbb{W}_{\bf n} $
which
can be represented as the contracted products of $3$-tensors 
over the $d$-fold product index set ${\cal J}:=\times_{\ell=1}^d J_\ell$, such that
\[
\textsf{\textbf{A}}=\sum\limits_{{\bf \alpha}\in {\cal J}} \textsf{A}^{(1)}_{\alpha_1}
\otimes \textsf{A}^{(2)}_{\alpha_1,\alpha_2} \otimes 
\cdots \otimes \textsf{A}^{(d)}_{\alpha_{d-1}}
\equiv \textsf{A}^{(1)}\Join \textsf{A}^{(2)}\Join \cdots \Join \textsf{A}^{(d)}, 
\]
where
$\textsf{A}^{(\ell)}_{\alpha_\ell,\alpha_{\ell+1}}   \in \mathbb{R}^{N_\ell}$, 
($\ell=1,...,d$), and 
$\textsf{A}^{(\ell)}=[\textsf{A}^{(\ell)}_{\alpha_\ell,\alpha_{\ell+1}}]$
is the vector-valued $r_\ell\times r_{\ell+1}$ matrix ($3$-tensor).
Here and in the following (e.g. in Definition \ref{def:OTT_Matformat}) 
the rank product operation ``$\Join$'' is defined as a regular 
matrix product of the two core matrices, their fibers (blocks) being multiplied 
by means of tensor product \cite{KazKhor_1:10}. In the index notation we have
$$
\textsf{A}(i_1,...,i_d)=\sum\limits_{\alpha_1 =1}^{r_1} \cdots  \sum\limits_{\alpha_d =1}^{r_d}
\textsf{A}^{(1)}_{\alpha_1}(i_1)  \textsf{A}^{(2)}_{\alpha_1,\alpha_2}(i_2)
\cdot\cdot\cdot \textsf{A}^{(d)}_{\alpha_{d-1}}(i_d)
\equiv \textsf{A}^{(1)}({i_1}) \textsf{A}^{(2)}({i_2}) \ldots \textsf{A}^{(d)}({i_d}),
$$
such that the latter is written in the {\it matrix product} form
(explaining the notion MPS) 
where $\textsf{A}^{(\ell)}(i_\ell) $ is $r_{\ell-1} \times r_\ell$ matrix.

\begin{example}\label{exm:figMPS}
 Figure \ref{fig:MPS_format} illustrates the TT representation of a $5$th-order tensor: each
 particular entry indexed by $(i_1,i_2,...,i_5)$ is factorized as a product of five matrices,
 thus explaining the name MPS.
\end{example}

\begin{figure}[tbh]\label{fig:MPS_format}
  \begin{center}
\includegraphics[width=11.0cm]{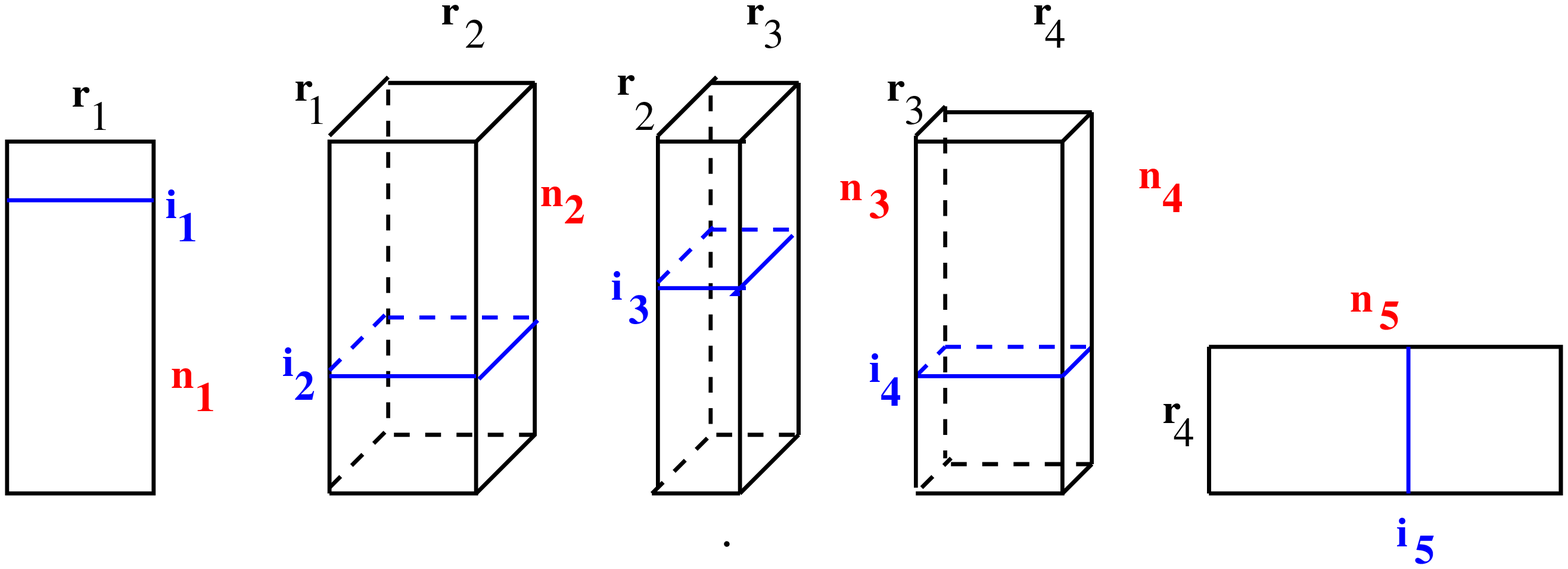}
\end{center}
\caption{Visualizing $5$th-order TT tensor.}
\end{figure}

In case $J_0 = J_d \neq \{1\}$, we arrive at the more general form of MPS,
the so-called tensor chain (TC) format. 
In some cases TC tensor can be represented as a sum of not more than $r_\ast$ TT-tensors
($r_\ast=\max {r_\ell}$), which can be converted to the TT tensor based on multilinear algebra
operations like sum-and-compress.
The storage cost for both TC and TT formats is bounded by $O(d r^2 N)$, $r=\max {r_\ell}$.

Clearly, one and the same tensor might have different ranks in different formats
(and, hence, different number of representation parameters).
Next example considers the Tucker and TT representations of a {\it function related} 
canonical tensor $\textsf{\textbf{F}}:=T(f)$ obtained by sampling of
the function $f(x)=x_1+...+x_d$, $x\in [0,1]^d$, on the Cartesian grid
of size $N^{\otimes d}$ and specified by $N$-vectors $X_\ell=\{ih\}_{i=1}^N$, ($h=1/N$, $\ell=1,...,d$)
and all-ones vector $1\in \mathbb{R}^N$.
The canonical rank of this tensor can be proven to be exactly $d$, \cite{Landsberg_book:2012}.
\begin{example} We have
 $rank_{Tuck}(\textsf{\textbf{F}})={2}$, with the explicit representation
$$
\textsf{\textbf{F}}= \sum\limits_{{\bf k}={\bf 1}}^{\bf 2} b_{\bf k} V^{(1)}_{k_1} \otimes\ldots\otimes 
V^{(d)}_{k_d}, \quad V^{(\ell)}_{1} =1, V^{(\ell)}_{2} = X_\ell, 
\quad [b_{\bf k}] \in \bigotimes_{\ell=1}^d \mathbb{R}^2.
$$
as well as  $rank_{TT}(\textsf{\textbf{F}})={2}$, due to exact decomposition,
$$
\textsf{\textbf{F}} =\begin{bmatrix}
 X_1 & 1       \\
\end{bmatrix} \Join
\left[\begin{array}{c c}   1 & 0 \\  
X_2 & 1 \end{array}\right]\Join ... \Join
\left[\begin{array}{c c}   1 & 0 \\  
X_{d-1} & 1 \end{array}\right] \Join 
\begin{bmatrix}
1  \\
 X_d  \\
\end{bmatrix}. 
$$
\end{example}

The rank-structured tensor formats like canonical, Tucker and MPS/TT-type decompositions 
induce the important concept
of {\it canonical, Tucker or matrix product operators} (CO/TO/MPO) acting between two 
tensor-product Hilbert spaces, each of dimension $d$,
$$
{\bm{\mathcal{A}}}: \mathbb{X}=\bigotimes_{\ell=1}^d X^{(\ell)} \to 
\mathbb{Y}=\bigotimes_{\ell=1}^d Y^{(\ell)}.
$$
For example, the $R$-term canonical operator (matrix) takes a form
\begin{equation*}\label{eqn:canMatr}
{\bm{\mathcal{A}}}= \sum_{\alpha=1}^R \bigotimes_{\ell=1}^d {\cal A}^{(\ell)}_\alpha,
\quad {\cal A}^{(\ell)}:X^{(\ell)} \to Y^{(\ell)}.
\end{equation*}
The action ${\bm{\mathcal{A}}} \textsf{\textbf{X}} $ on rank-$R_X$ canonical tensor 
$\textsf{\textbf{X}} \in \mathbb{X}$
is defined as $R R_X$-term canonical sum in $\mathbb{Y}$,
\[
 {\bm{\mathcal{A}}} \textsf{\textbf{X}}= 
\sum_{\alpha=1}^R \sum_{\beta=1}^{R_X} 
\bigotimes_{\ell=1}^d {\cal A}^{(\ell)}_\alpha \textsf{X}^{(\ell)}_\beta \in \mathbb{Y}.
\]
The rank-${\bf r}$ Tucker matrix can be defined by the similar way.

In the case of rank-${\bf r}$ TT format the respective matrices are defined as follows.
\begin{definition}\label{def:OTT_Matformat} 
The rank-$\bf r$ TT-operator (TTO/MPO) decomposition symbolized by a set of factorized 
operators ${\bm{\mathcal{A}}}$, is defined by
\[
{\bm{\mathcal{A}}}=\sum\limits_{{\bf \alpha}\in {\cal J}} {\cal A}^{(1)}_{\alpha_1} \otimes
{\cal A}^{(2)}_{\alpha_1 \alpha_2} \otimes 
\cdots \otimes {\cal A}^{(d)}_{\alpha_{d-1}} 
\equiv {\cal A}^{(1)}\Join {\cal A}^{(2)}\Join \cdots \Join {\cal A}^{(d)},
\]
where ${\cal A}^{(\ell)}=[A^{(\ell)}_{\alpha_\ell \alpha_{\ell+1}}]$
denotes the operator valued $r_\ell\times r_{\ell+1}$ matrix, and where
${A}^{(\ell)}_{\alpha_\ell \alpha_{\ell+1}}: X^{(\ell)} \to Y^{(\ell)}$, 
($\ell=1,...,d$), or in the index notation 
\begin{eqnarray}\label{Def:TTM}
{\bm{\mathcal{A}}}({i_{1}, j_{1}, \ldots ,i_{d},j_{d}})
& = &
\sum_{\alpha_{1}=1}^{r_{1}}\ldots\sum_{\alpha_{d-1}=1}^{r_{d-1}}
{\cal A}^{(1)}_{\alpha_{1}}(i_{1},j_{1})
{\cal A}^{(2)}_{\alpha_{1}\alpha_{2}}(i_{2},j_{2})\cdot\ldots\cdot
\nonumber \\
& \cdot &
{\cal A}^{(d-1)}_{\alpha_{d-2}\alpha_{d-1}}(i_{d-1},j_{d-1})
{\cal A}^{(d)}_{\alpha_{d-1}}(i_{d},j_{d}).
\end{eqnarray}
\end{definition}

Given rank-${\bf r}_X$ TT-tensor 
$\textsf{\textbf{X}}=\textsf{X}^{(1)}\Join \textsf{X}^{(2)}\Join \cdots 
\Join \textsf{X}^{(d)}\in \mathbb{X}$,
the action ${\bm{\mathcal{A}}} \textsf{\textbf{X}}= \textsf{\textbf{Y}}$ 
is defined as the TT element $\textsf{\textbf{Y}}=\textsf{Y}^{(1)}\Join \textsf{Y}^{(2)}\Join \cdots 
\Join \textsf{Y}^{(d)} \in \mathbb{Y}$,
$$
{\bm{\mathcal{A}}} \textsf{\textbf{X}} = \textsf{Y}^{(1)}\Join \textsf{Y}^{(2)}\Join \cdots 
\Join \textsf{Y}^{(d)}  \in \mathbb{Y},\quad \mbox{with} \quad 
\textsf{Y}^{(\ell)}=
[{\cal A}^{(\ell)}_{\alpha_1 \alpha_2}  \textsf{X}^{(\ell)}_{\beta_1 \beta_2}]_{\alpha_1 \beta_1,\alpha_2 \beta_2},
$$ 
where in the brackets we use the standard matrix-vector multiplication. 
The TT-rank of $\textsf{\textbf{Y}}$ is bounded by ${\bf r}_Y \leq {\bf r}\odot{\bf r}_X $, 
where $\odot$ means the standard Hadamard (entry-wise) product of two vectors.

To describe the index-free operator representation of the TT matrix-vector product,
we introduce the tensor operation denoted by $ \Join^\ast$ that 
can be viewed as dual to $\Join$: it is defined as the tensor (Kronecker) product of the two corresponding core 
matrices, their blocks being multiplied by means of a regular matrix product operation.
Now with the substitution $\textsf{Y}^{(\ell)}={\cal A}^{(\ell)}\Join^\ast \textsf{X}^{(\ell)}$ 
the matrix-vector product in TT format takes the operator form,
\[
 {\bm{\mathcal{A}}} \textsf{\textbf{X}} = ({\cal A}^{(1)}\Join^\ast \textsf{X}^{(1)} ) \Join
\cdots \Join ({\cal A}^{(d)}\Join^\ast \textsf{X}^{(d)} ).
\]

As an example, we consider the finite difference
negative $d$-Laplacian over uniform tensor grid, which is known to have
the Kronecker rank-$d$ representation, 
\begin{equation}\label{dLAPL}
{\Delta}_d = A \otimes I_N\otimes ... \otimes I_N + I_N \otimes A\otimes I_N
...\otimes I_N + ... + I_N \otimes I_N ... \otimes A 
\in \mathbb{R}^{{N}^{\otimes d}\times {N}^{\otimes d}},
\end{equation}
with $A= {\Delta}_1=\operatorname{tridiag}\{-1,2,-1\}\in \mathbb{R}^{N\times N}$, 
and $I_N$ being the $N\times N$ identity matrix.

For the canonical rank we have 
$rank_{Can}({\Delta}_d) = d$, while
the TT-rank of ${\Delta}_d$ is equal to $2$ for any dimension due to the 
explicit representation \cite{KazKhor_1:10},
$$
{\Delta}_d =\begin{bmatrix}
 \Delta_1 & I_N       \\
\end{bmatrix}
\Join
\left[\begin{array}{c c}   I_N & 0 \\  
\Delta_1 & I_N \end{array}\right]^{\otimes\left(d-2\right)}
\Join
\begin{bmatrix}
I_N  \\
\Delta_1  \\
\end{bmatrix}, 
$$
where the rank product operation ``$\Join$'' in the matrix case is defined as above \cite{KazKhor_1:10}.
The similar statement is true concerning the Tucker rank, $rank_{Tuck}({\Delta}_d)=2$.


\subsection{Quantics tensor approximation of function related vectors}\label{ssec:QQTform}


In the case of large mode size, the asymptotic storage cost for a class of function related
 $N$-$d$ tensors can be reduced to $O(d \log N)$ by using quantics-TT (QTT) 
tensor approximation method \cite{KhQuant_P:09,KhQuant:09}.
The QTT-type approximation of an $N$-vector with $N=q^L$, $L\in \mathbb{N}$, is defined as the 
tensor decomposition (approximation) in the canonical, TT or more general formats applied to a tensor 
obtained by the $q$-adic folding (reshaping) of the target vector to an $L$-dimensional 
$q\times ... \times q$ data array (tensor) that is thought as an element of the $L$-dimensional 
quantized tensor space.

                      
In particular, in the vector case, i.e. for $d=1$, a vector 
$
X=[X(i)]_{i\in I}\in \mathbb{W}_{{N},1},
$
is reshaped to its quantics image in 
$
\mathbb{Q}_{q,L}= \bigotimes_{j=1}^{L}\mathbb{K}^{q},
 \; \mathbb{K}\in \{\mathbb{R},\mathbb{C}\},
$ 
by $q$-adic folding, 
\[
\mathcal{F}_{q,L}: X \to \textsf{\textbf{Y}}
=[Y({\bf j})]\in \mathbb{Q}_{q,L}, \quad {\bf j}=\{j_{1},...,j_{L}\}, 
\quad \mbox{with} \quad j_{\nu}\in \{1,2,...,q\}, \,\nu=1,...,L, 
\]
where for fixed $i$, we have $Y({\bf j}):= X(i)$, and $j_\nu=j_\nu(i)$ is defined via $q$-coding,
$
j_\nu - 1= C_{-1+\nu}, 
$
such that the coefficients $C_{-1+\nu} $ are found from the
$q$-adic representation of $i-1$,
\[
i-1 =  C_{0} +C_{1} q^{1} + \cdots + C_{L-1} q^{L-1}\equiv
\sum\limits_{\nu=1}^L (j_{\nu}-1) q^{\nu-1}.
\]
For $d> 1$ the construction is similar \cite{KhQuant:09}.

Suppose that the quantics image for certain  $N$-$d$  tensor  
(i.e. an element of $D$-dimensional quantized tensor space with $D=d \log_q N= dL$)
can be effectively represented (approximated) in the low-rank canonical or TT format
living in the higher dimensional tensor space $\mathbb{Q}_{q,dL}$.
In this way we introduces the QTT approximation of an $N$-$d$ tensor. 
For given rank $\{r_k\}$, ($k=1,...,dL$) QTT approximation of an $N$-$d$ tensor  
the number of  representation parameters can be estimated by
$$
d q r^2 \log_q N \ll N^d, \quad \mbox{where}\quad r_k \leq r, k=1,...,dL,
$$ 
providing $\log$-volume scaling in the size of initial tensor $O(N^d)$.
The optimal choice of the base $q$ is shown to be $q=2$ or $q=3$ \cite{KhQuant:09}, however
the numerical realizations are usually implemented  by using binary coding, i.e. for $q=2$. 

The {\it principal question} arises: either there is the rigorous theoretical substantiation of 
the QTT approximation scheme that establishes it as the {\it new powerful approximation 
tool} applicable to the broad class of data, or this is simply the heuristic 
algebraic procedure that may be efficient on certain numerical examples.

The answer is positive: the power of QTT approximation method is due to the 
perfect rank-$r$ decomposition discovered in \cite{KhQuant_P:09} for 
the wide class of function-related tensors obtained by sampling a continuous functions
over uniform (or properly refined) grid:
\begin{itemize}
\item $r=1$ for complex exponents, 
\item $r=2$ for trigonometric functions and  for Chebyshev polynomials 
sampled on Chebyshev-Gauss-Lobatto grid,
\item $r\leq m+1$ for polynomials of degree $m$, 
\item $r$ is a small constant for some wavelet basis functions, etc.
\end{itemize}
all independently on the vector size $N$ and all applicable to the general case $q=2,3,...$.


Note that the name quantics (or quantized) tensor approximation (with a shorthand QTT) 
originally introduced in 2009 by \cite{KhQuant_P:09} is reminiscent of 
the entity ``quantum of information'', that mimics 
the minimal possible mode size ($q=2$ or $q=3$) of the quantized image.
Later on the QTT approximation method was also renamed  
{\it vector tensorization} \cite{Gras_Quant:10}. 

It is worth to comment that imposing different names for the same tensor
approximation method may lead to  
confusions in historical surveys on the topic.
For example, \S 14 in monograph \cite{Hack_Book12}, actually describing the quantized
tensor approximation method, 
designates this approach as the vector tensorization following 
the preprint \cite{Gras_Quant:10} (2010), 
but missing the reference on the original MPI MiS preprint \cite{KhQuant_P:09} (2009)
published merely one year earlier, where the method was introduced 
(and analyzed theoretically) as quantics-TT (QTT) approximation for vectors.
A recent survey paper \cite{Hack_SIAMSurv:14} presents yet another misleading interpretation 
by suggesting that the original name of the approach is the vector tensorization introduced 
in preprint \cite{Gras_Quant:10}, and stating that the quantics-TT (QTT) method has appeared 
later in 2011 in the journal version \cite{KhQuant:09}. 
Notice that the notion ``QTT tensor approximation'' has been since 2009 
recognized in the literature as the common notation (see, for example \cite{KhOsel_1_P:09})
for this efficient tensor approximation tool, now applied to various nontrivial 
multidimensional models.
This remark is an attempt to establish correct chronology in the development 
of the QTT approximation techniques. 

Concerning the matrix case, 
it was first found in \cite{Osel-TT-LOG:09} by numerical tests 
that in some cases the dyadic reshaping of an $N \times N$ matrix with $N=2^L$ may lead to a small
TT-rank of the resultant  matrix rearranged to the tensor form.
The efficient low-rank QTT representation for a class of discrete multidimensional operators
(matrices) was proven in \cite{KazKhor_1:10,KhOsel_3_DMRG:10}.
Moreover, based on the QTT approximation, the important algebraic matrix operations
like FFT, convolution and wavelet transforms can be performed in $O(\log_2 N)$ complexity
\cite{DoKhSav:11,KazKhTyrt_Toepl:11,KhMiao:13} (see also \S\ref{ssec:QTT-Oper} below).

The recently introduced combined two-level Tucker-TT \cite{KhQuant:09} 
and QTT-Tucker \cite{DoKh_QTTT:12} formats encapsulate the
benefits of the Tucker, TT and QTT representations and relax certain disadvantages 
of their independent use. 
The numerical experiments clearly indicate
that the two-level QTT-Tucker format outperforms both the TT and global QTT representations
applied independently due to systematic reduction of the effective tensor ranks.

To conclude this section we note that the remarkable property concerning the uniform 
in the grid size $N$ 
bound on the QTT-ranks for some classes of function related vectors (tensors) may address 
the natural question:
is there some meaningful interpretation of quantized tensor formats in the infinite 
dimensional setting ? 
The constructive answer on this question should take into account the following issues: 
(a) the target $N$-vector, $N=2^L$,  is likely to represent a  
sequence of continuous functions corresponding to different $L$  
so that one may expect the continuous limit as $L\to \infty$; 
(b) the QTT representation of interest is supposed to be used
as an intermediate  quantity involved in the solution of certain PDE, so that it would be
possible to calculate functionals and operators on such an element 
involved in the solution process designed in the quantized tensor space.
The latter is exactly the point why the QTT representation of the basic 
partial differential operators and functional transforms 
(say, discrete elliptic operators, FFT, wavelet, 
and circulant convolution) were also in the focus of the
QTT approximation theory (see also \S\ref{ssec:QTT-Oper}). 
The more detailed discussion on this intriguing topic will be addressed in the 
forthcoming papers.

%

\subsection{Functions in quantized tensor spaces}\label{ssec:QTT-Fun}

The simple isometric folding of a multi-index data array into the $2\times 2 \times ... \times 2$ 
format leaving in the virtual (higher) dimension $D=d \log N$ is 
the conventional reshaping operation in computer data representation. 
The most gainful features of numerical computations in the quantized tensor space 
appear via the remarkable rank-approximation properties discovered  
for the wide class of function-related vectors/tensors \cite{KhQuant:09}.

Next lemma presents the basic results on the rank-$1$ (resp. rank-$2$)
$q$-folding representation of the exponential (resp. trigonometric) vectors.

\begin{lemma}  \label{q-splitting} (\cite{KhQuant:09})
 For given $N=q^L$, with $q=2,3,...$,  $L\in \mathbb{N}$, and 
 $z  \in \mathbb{C}$, the exponential $N$-vector, 
$
Z :=\{x_n = z^{n-1}\}_{n=1}^N,
$
can be reshaped by the  $q$-folding 
to the rank-$1$, $q^{\otimes L}$-tensor, 
\begin{equation}\label{eq exp-vect q}
\mathcal{F}_{q,L}:  Z \mapsto  \textsf{\textbf{Z}}=
 \otimes_{p=1}^L [1 \; z^{q^{p-1}}\,...\,z^{(q-1)q^{p-1}} ]^T  \in \mathbb{Q}_{{q},L}.
\end{equation}
The number of representation parameters specifying the QTT image is reduced dramatically from $N$ to $q L $.

The trigonometric $N$-vector, 
$
 T= \Im m(Z) :=\{t_n =  \sin(\omega (n-1))\}_{n=1}^N,\quad \omega \in \mathbb{R},
$
can be reshaped by the successive $q$-adic folding 
\[
\mathcal{F}_{q,L}:   T \mapsto \textsf{\textbf{T}} \in \mathbb{Q}_{{q},L},
\]
to the $q^{\otimes L}$-tensor $\textsf{\textbf{T}}$, which has both the canonical 
$\mathbb{C}$-rank, and the TT-rank equal exactly $2$.
The number of representation parameters does not exceed $4 q L $. 
\end{lemma}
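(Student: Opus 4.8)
The plan is to prove the exponential statement first and then derive the trigonometric one by taking imaginary parts. For the exponential vector $Z=\{z^{n-1}\}_{n=1}^N$, the key observation is that the $q$-adic coding of the index is additive in the exponent: with $n-1=\sum_{p=1}^L (j_p-1)q^{p-1}$, we have
\[
z^{n-1}=z^{\sum_{p=1}^L (j_p-1)q^{p-1}}=\prod_{p=1}^L \bigl(z^{q^{p-1}}\bigr)^{\,j_p-1}.
\]
Hence $Y(\mathbf{j})=\prod_{p=1}^L W_p(j_p)$ with $W_p=[1,\ z^{q^{p-1}},\ z^{2q^{p-1}},\ \ldots,\ z^{(q-1)q^{p-1}}]^T\in\mathbb{K}^q$, which is by definition a rank-$1$ canonical tensor in $\mathbb{Q}_{q,L}$, namely $\textsf{\textbf{Z}}=\otimes_{p=1}^L W_p$. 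This establishes \eqref{eq exp-vect q}; the parameter count is $qL$ because each of the $L$ factors has $q$ entries. Since a rank-$1$ canonical tensor has TT-rank all-ones, there is nothing more to check for the exponential case.

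For the trigonometric vector, I would write $t_n=\sin(\omega(n-1))=\Im m(z^{n-1})$ with $z=e^{i\omega}$. The real-valued vector $T$ is therefore the imaginary part of the complex exponential vector $Z$, and equally $T=\tfrac{1}{2i}(Z-\bar Z)$ where $\bar Z=\{\bar z^{n-1}\}$ is the exponential vector built from $\bar z=e^{-i\omega}$. Applying the just-proved folding to each of $Z$ and $\bar Z$ gives two rank-$1$ tensors, so $\textsf{\textbf{T}}=\tfrac{1}{2i}\bigl(\otimes_{p=1}^L W_p(z)-\otimes_{p=1}^L W_p(\bar z)\bigr)$ is a sum of two rank-$1$ terms; thus the canonical $\mathbb{C}$-rank is at most $2$, and since any canonical-rank-$2$ tensor has TT-rank at most $2$, the TT-rank is at most $2$ as well. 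The parameter count of at most $4qL$ follows by storing the two real $q$-vectors $\Re W_p$, $\Im W_p$ at each of the $L$ sites (or equivalently the two complex factors).

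The remaining point — showing the ranks are \emph{exactly} $2$ rather than merely $\le 2$ — is the only nonroutine part, and it is needed precisely to exclude the degenerate cases. I would argue that if the TT-rank (equivalently the canonical rank over $\mathbb{C}$) were $1$, then $T$ would factor as $\otimes_{p=1}^L u_p$, forcing every $2\times 2$ "unfolding" matrix of $T$ to have rank $1$; a direct computation of, say, the first unfolding (splitting off the least-significant coded digit) yields a matrix whose entries are values of $\sin$ at an arithmetic progression, and its $2\times 2$ minors are of the form $\sin a\,\sin b-\sin c\,\sin d$ with $a+b\ne c+d$ in general, which does not vanish identically for generic $\omega\ne 0$. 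Hence the rank cannot drop below $2$, and combined with the upper bound the ranks are exactly $2$. One should note the implicit genericity caveat ($\omega$ not an integer multiple of $\pi$, etc.), under which the statement is meant; the degenerate $\omega$ reduce $T$ to the zero vector or to a scaled exponential and are of no interest.
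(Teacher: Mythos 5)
Your argument is correct, and it is the standard route to this result. Note that the paper itself offers no proof of the lemma (it is quoted from the cited reference); what it does display, in the example immediately following, is the explicit \emph{real} rank-$2$ TT factorization of the $\sin$-vector for $q=2$, with $2\times 2$ rotation-matrix cores coming from the angle-addition formula $\sin(a+b)=\sin a\cos b+\cos a\sin b$. Your derivation via $T=\tfrac{1}{2i}(Z-\bar Z)$ with $z=e^{i\omega}$ produces the same conclusion from the complex side: two rank-$1$ exponential tensors give canonical $\mathbb{C}$-rank $\le 2$ and hence TT-rank $\le 2$, and the real rotation-matrix cores of the paper's example are exactly what you get by passing from the complex diagonal pair $\{z,\bar z\}$ to a real basis. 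The $4qL$ parameter count and the genericity caveat on $\omega$ (which the lemma leaves implicit) are both handled appropriately.

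One small imprecision in your lower-bound sketch: for the first unfolding of a vector whose entries depend only on $n-1=a+b$ (least-significant digit $a$, remainder $b$), the two products appearing in any $2\times 2$ minor have \emph{equal} argument sums, not unequal ones as you state. The minor is still generically nonzero, because the product-to-sum identity collapses it to $-\sin(\omega\,\Delta a)\,\sin(\omega\,\Delta b)$, which vanishes only for exceptional $\omega$. So the conclusion stands, but the reason you give for the nonvanishing is not the right one; the cancellation of the sum-terms is precisely what makes the computation clean.
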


\begin{example} 
In the case  $q=2$, the single $\sin$-vector 
has the explicit rank-$2$ QTT-representation in $\{0,1\}^{\otimes L}$ 
(see \cite{DoKhSav:11,OselQTTfunc:10})
with $k_p=2^{p-L} i_p -1$, $i_p\in \{0,1\}$,
\begin{equation*}\label{eq sin-vect}
T \mapsto \textsf{\textbf{T}}=\Im m (\textsf{\textbf{Z}})= 
[\sin\, \omega k_1 \cos\,\omega k_1] \Join_{p=2}^{L-1}
\left[
\begin{array}{c c}  \cos\,\omega k_p &-\sin\,\omega k_p \\  
\sin\,\omega k_p &\cos\,\omega k_p \end{array}
\right] \Join \left[
\begin{array}{c} \cos\,\omega k_L \\ \sin\,\omega k_L \end{array}
\right].
\end{equation*}
\end{example}

Other results on QTT representation of polynomial, Chebyshev polynomial, Gaussian-type 
vectors, multi-variate polynomials and their piecewise continuous versions have been derived 
in \cite{KhQuant:09} and in
subsequent papers \cite{KhOsel_3_DMRG:10,Gras_Quant:10,DoKhOsel:11},
substantiating the capability of numerical calculus in quantized tensor spaces.

In computational practice the binary coding representation with $q=2$
is the most convenient choice, though the Euler number $q_\ast=\operatorname{e}\approx 2.7...$ 
is shown to be the optimal value \cite{KhQuant:09}).

The following example demonstrates the low-rank QTT approximation can be applied 
for $O(|\log \varepsilon|)$ complexity integration of functions. 
Given continuous function $f(x)$ and weight function $w(x)$, $x\in [0,A]$, 
consider the rectangular $N$-point quadrature, $N=2^L$, ensuring the error bound  
$|I-I_N|=O(2^{-\alpha L})$. Assume that the corresponding functional vectors 
allow low-rank QTT approximation. Then the rectangular quadrature can be implemented as   
the scalar product on QTT tensors, in {$O(\log N)$} operations.
$$
\int_{-1}^1 w(x) f(x) dx \approx I_N(f):=h\sum\limits_{i=1}^N w(x_i) f(x_i) =
\langle \textsf{\textbf{W}},\textsf{\textbf{F}} \rangle_{QTT},
\quad \textsf{\textbf{W}},\textsf{\textbf{F}} \in \bigotimes_{\ell=1}^L \mathbb{R}^2.
$$
Example \ref{exm:QTT-quadr} illustrates the uniform bound on the QTT rank
for nontrivial highly oscillating functions and with choice $w(x)=1$, see Figure \ref{fig:QTT-quadr}.
Here and in the following the threshold error like $\epsilon_{QTT}$ corresponds to the Euclidean norm.


\begin{figure}[tbh] 
\begin{center}
\includegraphics[width=7.5cm]{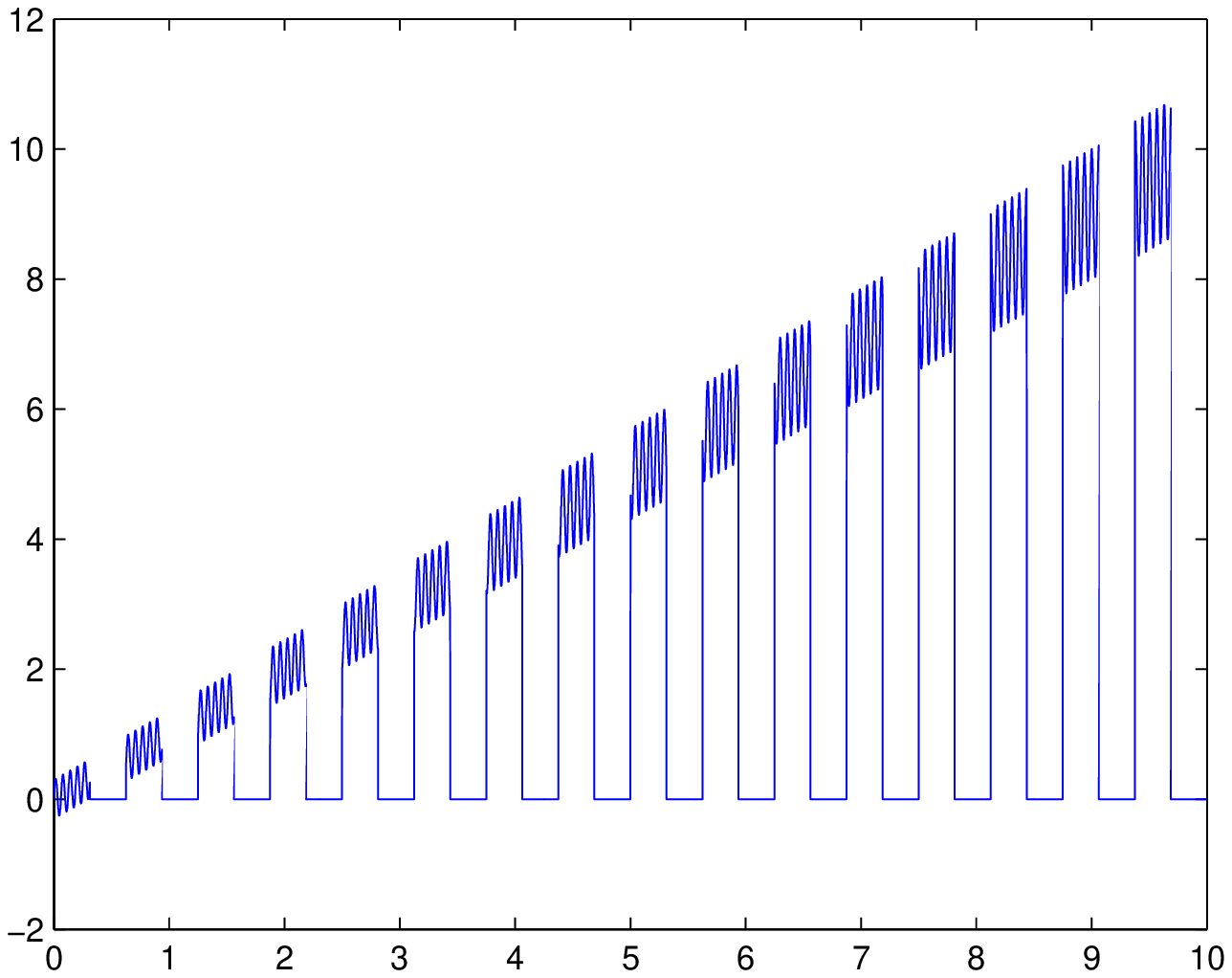} \quad 
\includegraphics[width=7.5cm]{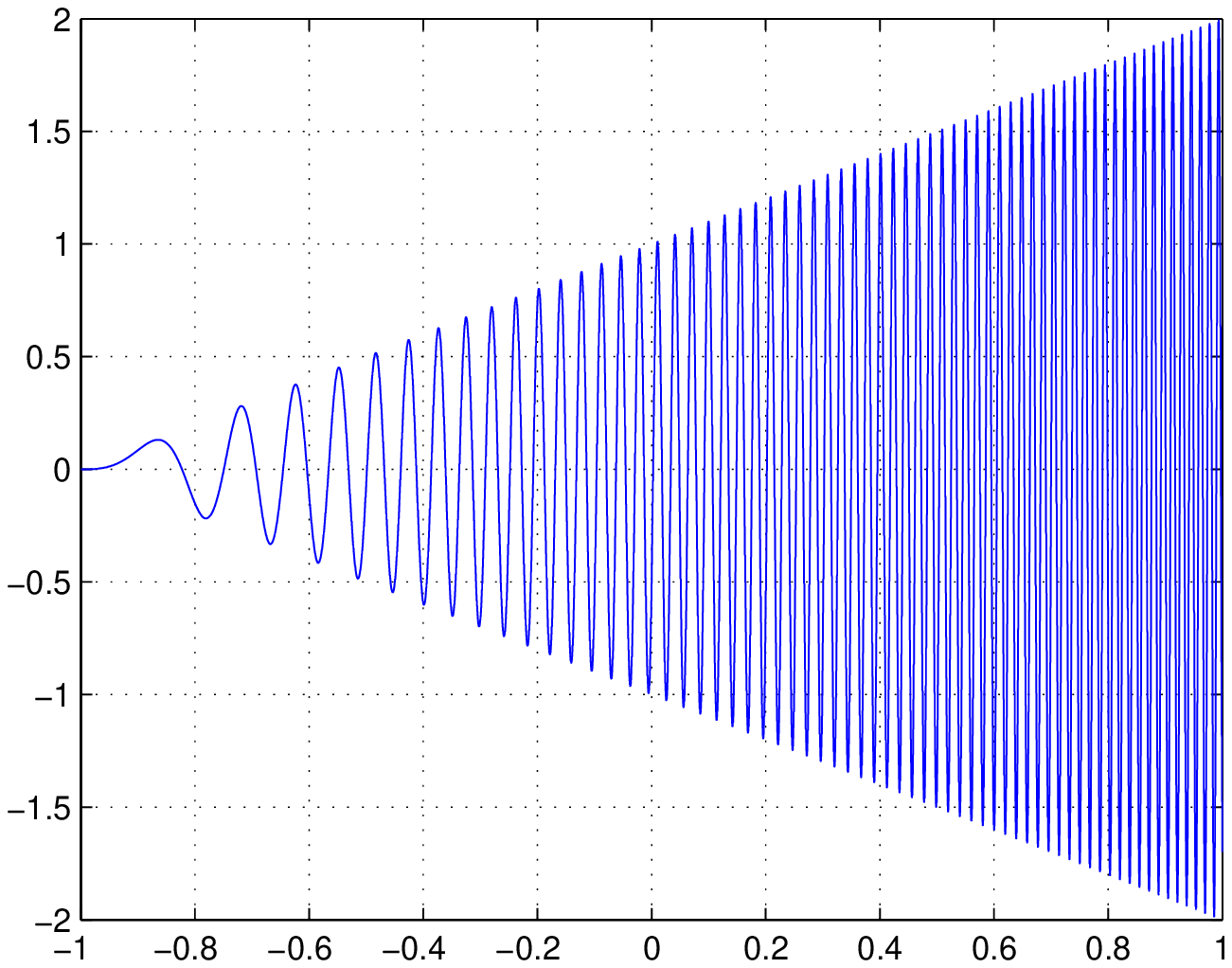}
\end{center}
\label{fig:QTT-quadr}
\caption{Visualizing functions $f_3$ (left) and $f_4$.}
\end{figure}

\begin{example} \label{exm:QTT-quadr}
Highly oscillated and singular functions on $[0,A]$, $\omega=100$, $\epsilon_{QTT}=10^{-6}$,
\begin{align*}
f_3(x) & =  \left\{ \begin{array}{ll}
       x+ a_k \sin(\omega x), & x\in 10\left(\frac{k-1}{p};\frac{k-0.5}{p}\right]\\
       0,                     & x\in 10\left(\frac{k-0.5}{p};\frac{k}{p}\right]
                    \end{array}
            \right. \\ \nonumber
f_4(x) & =  (x+1)\sin(\omega (x+1)^2), \quad x\in [0,1], 
\quad \mbox{(Fresnel integral)}. 
\end{align*}
where function $f_3(x)$, $x\in [0,10]$, $k=1,...,p$, $p=16$, $a_k=0.3 + 0.05(k-1)$,
is recognized on three different scales.
\end{example}
The average QTT rank over all mode ranks for the corresponding functional vectors are given in the next table.
The maximum rank over all the fibers is nearly the same as the average one.

\begin{table}[h]
\begin{center}
\begin{tabular}{c|c|c}
$N \setminus\overline{r}$  &  $r_{QTT}(f_3)$& $r_{QTT}(f_4)$ \\
\hline
$2^{14}$  &  3.5 & 6.5\\
\hline
$2^{15}$  &  3.6 & 7.0 \\
\hline
$2^{16}$  & 3.6  & 7.5 \\
\hline
$2^{17}$  & 3.6 & 7.9 
\end{tabular}
\end{center}
\caption{Average QTT ranks of $N$-vectors generated by $f_3$ and $f_4$.}
\end{table}

Notice that 1D and 2D numerical quadratures  based on interpolation by Chebyshev polynomials
have been developed \cite{HalTref:12}. Taking into account that Chebyshev polynomial sampled 
on Chebyshev grid has exact rank-$2$ QTT representation \cite{KhQuant:09}, allows us 
the efficient numerical integration by  Chebyshev interpolation using the QTT approximation.

\subsection{Operators in quantized tensor spaces}\label{ssec:QTT-Oper}

This section discusses the quantised representation of operators/matrices.
The explicit low-rank QTT representations for the wide class of discrete elliptic operators (FEM/FDM matrices)
was recently developed in 
\cite{KazKhor_1:10,KhQuant:09,KhOsel_3_DMRG:10,KazKhTyrt_Toepl:11,DoKh_CME_NLAA:13}.

As the first result is this direction the explicit rank-$3$ QTT representation of ${\Delta}_1$ 
was introduced \cite{KazKhor_1:10},
\begin{eqnarray}\label{Eq:Laplace-1-DD-QTT}
\Delta_1 & = &
\begin{bmatrix}
I       &\MTran{J}      &J      \\
\end{bmatrix}
 \Join
\begin{bmatrix}
I       &\MTran{J}      &J      \\
        &J      &               \\
        &       &\MTran{J}      \\
      \end{bmatrix}^{\otimes\left(d-2\right)}
    \Join  
\begin{bmatrix}
2I-J-\MTran{J}  \\
-J              \\
-\MTran{J}      \\
\end{bmatrix},\nonumber
\end{eqnarray}
with the Pauli matrices
\begin{equation*}\label{eq I, J}
I=
\left[\begin{array}{c c}   1 & 0 \\  
0 & 1 \end{array}\right], \quad
J=
\left[\begin{array}{c c}   0 & 1 \\  
0 & 0 \end{array}\right].
\end{equation*}
Other results concerning QTT representation of ${\Delta}_d$ and its inverse for $d\geq 2$ 
are collected in Proposition \ref{Df:TT-Rank_Laplac} below.

The analysis of the low QTT-rank approximations of elliptic operator inverse
for $d\geq 2$ is based on certain assumptions on the QTT-rank of the
 matrix exponential family.
\begin{conjecture}\label{Exp-matr rank-r TT}
For any given $\varepsilon>0$, and for fixed $a,b>0$, 
the family of matrix exponentials, 
$\{\exp(-t_{k}{{\Delta}_1})\}$, $t_{k}>0 $, $k=-M,...,M$, 
allows the QTT $\varepsilon$-approximation with rank-$r_\Delta$
being uniformly bounded in the grid size $N$
and in the scaling factors $t_{k} \in [a,b]\subset \mathbb{R}_{>0}$  
(see Table \ref{2-matrix ranks} for numerical justification).
\end{conjecture}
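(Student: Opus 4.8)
The plan is to reduce the claim to a scalar best‑approximation problem via functional calculus, and then to exploit the band (locality) structure of polynomials of $\Delta_1$. First I would use that the QTT‑rank of a matrix is subadditive under linear combinations and submultiplicative under matrix products, together with the explicit rank‑$3$ QTT factorization of $\Delta_1=\operatorname{tridiag}\{-1,2,-1\}$ recalled above; hence for every polynomial $p_m$ of degree $m$ the matrix $p_m(\Delta_1)$ has QTT‑rank controlled by $m$ alone. Since $\Delta_1$ is symmetric positive definite with $\sigma(\Delta_1)\subset(0,4)$ \emph{independently of} $N$, one has
\[
\bigl\|\exp(-t\Delta_1)-p_m(\Delta_1)\bigr\|_2=\max_{x\in\sigma(\Delta_1)}\bigl|e^{-tx}-p_m(x)\bigr|\le\max_{x\in[0,4]}\bigl|e^{-tx}-p_m(x)\bigr|,
\]
so it is enough to let $p_m$ be a near‑best polynomial approximant of $x\mapsto e^{-tx}$ on the fixed interval $[0,4]$.

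Next I would fix the degree. For $t$ in the fixed compact interval $[a,b]$, Chebyshev‑approximation estimates for the entire function $x\mapsto e^{-tx}$ on $[0,4]$ give, via the boundary‑layer square‑root law, a bound $m=m(\varepsilon,b)=\mathcal{O}(\sqrt{b\,|\log\varepsilon|})$ that is \emph{independent of} $N$. Then $p_m(\Delta_1)$ equals a banded Toeplitz matrix of bandwidth $\le m$ up to corrections of matrix rank $\le 2m$ supported in the two $m\times m$ corners (the Dirichlet‑boundary defect, coming from $(P_N T P_N^{\ast})^m=P_N T^m P_N^{\ast}+(\text{corner terms})$ for the bi‑infinite stencil $T$). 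I would then invoke the QTT‑rank estimates for banded finite‑difference/Toeplitz matrices from \cite{KazKhor_1:10,KhOsel_3_DMRG:10,KazKhTyrt_Toepl:11}: a bandwidth‑$w$ Toeplitz $N\times N$ matrix has QTT‑rank $\mathcal{O}(w)$ uniformly in $N=2^L$, because for a dyadic unfolding into blocks of size $\ge w$ only three distinct block‑diagonals occur, while at the finest levels at most $\mathcal{O}(w)$ distinct blocks occur. Adding the rank‑$\le 2m$ corner term, whose QTT‑rank is at most its matrix rank, yields $r_{QTT}(p_m(\Delta_1))\le C m=:r_\Delta(\varepsilon,a,b)$, uniformly in $N$ and in $t\in[a,b]$, which together with the first step proves the statement. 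A fully equivalent route replaces the polynomial by a Dunford contour integral and uses the exponential off‑diagonal decay — hence low QTT‑rank — of the resolvents $(zI-\Delta_1)^{-1}$ for $z$ on a fixed contour encircling $[0,4]$; here the delicate imported ingredient is again the QTT‑rank bound for near‑banded (resp. exponentially decaying) matrices, and that is the step I would expect to demand the most care.

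The genuine difficulty, and I believe the reason the statement is phrased as a conjecture rather than a theorem, is that the estimate above is comfortable only because $t$ is confined to a \emph{fixed} interval, so the effective bandwidth $\sqrt{t}$ of the discrete heat kernel $e^{-t\Delta_1}$ does not grow with $N$. The situation really relevant for the QTT representation of $\Delta_d^{-1}$ in Proposition \ref{Df:TT-Rank_Laplac} is the one in which the sinc‑quadrature nodes $t_k$ are allowed to grow with $N$ — up to $t_k\sim N^2$, since $\Delta_d$ has smallest eigenvalue $\sim d\pi^2 N^{-2}$ — and there the polynomial degree $\sqrt{t_k}$ reaches $\mathcal{O}(N)$ and the band argument collapses. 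For such large $t$ I would instead pass to the spectral side: all but the $\mathcal{O}\bigl((N/\sqrt{t})\,\sqrt{|\log\varepsilon|}\bigr)$ lowest eigenmodes of $\Delta_1$ contribute terms of size $\le\varepsilon$, and each of those eigenvectors is a discrete sine vector, hence of QTT‑rank $2$ by Lemma \ref{q-splitting}; so for large $t$ the matrix exponential is $\varepsilon$‑close to a short sum of QTT‑rank‑$2$ projectors. The hard part is that neither the banded‑polynomial estimate nor the spectral‑gap estimate is by itself effective in the intermediate window $1\ll t\ll N^2$ (each of the two bounds then still grows with $N$, worst around $t\sim N$), so one must splice the two mechanisms into a single $N$‑independent rank bound uniform across the whole range of $t_k$; the numerical data in Table \ref{2-matrix ranks} nonetheless indicate that the rank remains bounded there as well.
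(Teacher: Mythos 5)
The paper offers no proof of this statement: it is stated as a \emph{conjecture} and is supported only by the numerical evidence of Table \ref{2-matrix ranks}, so there is nothing to compare your argument against. Taken on its own terms, your argument for the literal statement (fixed $a,b$) is essentially sound and would in fact upgrade the conjecture to a proposition in that regime: the spectral inclusion $\sigma(\Delta_1)\subset(0,4)$ uniformly in $N$, the reduction to scalar Chebyshev approximation of $e^{-tx}$ on $[0,4]$ with an $N$-independent degree $m=m(\varepsilon,b)$, and the splitting $p_m(\Delta_1)=P_N\,p_m(T)\,P_N^{\ast}+E$ into a bandwidth-$m$ Toeplitz part (QTT rank $O(m)$ by the results you cite) plus corner defects are all correct. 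One step needs repair: the assertion that the QTT rank of the corner term $E$ ``is at most its matrix rank'' is false in general, since a rank-one matrix $uv^{T}$ can have QTT rank as large as the QTT ranks of $u$ and $v$ allow. What saves you is not the matrix rank but the \emph{support}: $E$ lives in two $m\times m$ corner blocks, hence is a sum of at most $2m^{2}$ terms $e_i e_{i'}^{T}$ with $i,i'\le m$ (or $\ge N+1-m$), each of QTT rank $1$, giving $r_{QTT}(E)\le 2m^{2}$ uniformly in $N$ --- a worse power of $m$ but still $N$-independent, which is all the conjecture asks. (A smaller point: for Chebyshev approximation on the fixed interval $[0,4]$ the required degree scales like $O(b+|\log\varepsilon|)$ rather than $O(\sqrt{b|\log\varepsilon|})$; the square-root law pertains to the Gaussian off-diagonal decay of the heat kernel itself, not to the polynomial degree. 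Either way the bound is uniform in $N$ for $t\in[a,b]$.) You should also note that the paper measures $\epsilon_{QTT}$ in the Euclidean (Frobenius) norm, so the spectral-norm estimate must be converted; on a fixed $[a,b]$ this costs only a constant $e^{4b}$ in relative accuracy.

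Your closing diagnosis is the most valuable part and is correct: the statement as literally phrased (fixed $[a,b]$) is the easy case, whereas Lemma \ref{Anis Lapl Inv} and the bound (\ref{QTT_LaplInv}) consume the conjecture at sinc nodes $t_k=e^{k\mathfrak{h}}$ that grow up to $\sim N^{2}|\log\varepsilon|$ because $\SpMin{\Delta_1}\sim N^{-2}$. In that range both of your mechanisms --- the banded/Toeplitz bound (effective bandwidth $\sqrt{t|\log\varepsilon|}$) and the spectral truncation to the $\sim N\sqrt{|\log\varepsilon|/t}$ surviving sine modes of QTT rank $2$ each (Lemma \ref{q-splitting}) --- degrade to $O(\sqrt{N})$ near $t\sim N$, and gluing them into a single $N$-uniform bound is precisely the open content of the conjecture that Table \ref{2-matrix ranks} only probes numerically. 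So: your write-up proves the conjecture as literally stated, identifies correctly that this does not yet justify the downstream application, and needs only the corner-rank argument repaired as above.
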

Table \ref{2-matrix ranks} represents the average QTT-ranks in approximation  
of certain function related matrices up to fixed tolerance
$\varepsilon_{QTT}=10^{-5}$.
It includes the important example of matrix exponential
(cf. Conjecture \ref{Exp-matr rank-r TT}).
The matrix $A=diag({f(\|x\|^2)})$, ($x=(x_1,x_2)$, $|x_i|\leq 1$) is a diagonal matrix with
diagonal entries obtained by sampling a function $f(\|x\|^2)$ over uniform grid points situated 
on the diagonal $x_1=x_2$.

We observe that rank parameters depend very mildly on the grid size.
\begin{table}[h]
\begin{center}
\begin{tabular}{|c|c|c|c|c|}
\hline
$N \setminus \overline{r}$  & $e^{-\alpha \Delta_1}, \alpha=0.1/1/10/10^2$ &   
$diag({1/ \|x\|^2})$ & $diag({e^{-\|x\|^2}})$ \\
\hline
$2^9$ & 6.2/6.8/9.7/11.2 &  5.1 & 4.0\\
\hline
$2^{10}$ & 6.3/6.8/9.5/10.8 &  5.3 & 4.0\\
\hline
$2^{11}$ & 6.4/6.8/9.0/10.4  &  5.5  & 4.1 \\
\hline
\end{tabular}
\end{center}
\caption{Average QTT ranks of $N\times N$-matrices for large  $N=2^p$.}
\label{2-matrix ranks}
\end{table}
We note that the QTT ranks of the matrix $A=diag({f(\|x\|^2)})$ coincides with those for  
the generating vector $X$ that follows from the explicit QTT representations 
(see Def. \ref{def:OTT_Matformat}).
Given vector $X$ and the corresponding rank-${\bf r}_X$ QTT-tensor 
$\textsf{\textbf{X}}=\textsf{X}^{(1)}\Join \textsf{X}^{(2)}\Join \cdots 
\Join \textsf{X}^{(d)}\in \mathbb{X}$, then the QTT representation
of the matrix $A=diag(X)$ is given by
 \[
{\bm{\mathcal{A}}}
= {\cal A}^{(1)}\Join {\cal A}^{(2)}\Join \cdots \Join {\cal A}^{(d)}, 
\quad {\cal A}^{(2)} = diag( \textsf{X}^{(k)}),
\]
 where $diag(\textsf{X}^{(k)})$ is the matrix diagonalization of the core tensor $\textsf{X}^{(k)}$.

Define the anisotropic $d$-Laplacian
\begin{equation}\label{anis dLapl}
\Delta_{d,\alpha}:= \sum\limits_{\ell=1}^d \alpha_\ell
\bigotimes_{k=1}^d \Delta_1^{\delta_{\ell-k}},\quad \alpha_\ell>0, \quad \delta_{m} \quad
\mbox{is the Kronecker symbol}.
\end{equation}
Now we can prove the following Lemma (see \cite{KhQuant:09}).
\begin{lemma}\label{Anis Lapl Inv} 
Under claims of Conjecture \ref{Exp-matr rank-r TT} on the QTT-rank of the univariate
matrix exponentials, 
there exist $C,\beta >0$ independent of $d$, such that for all $M\in \mathbb{N}$,
\begin{equation}\label{appr_dLaplm1}
\|{\Delta}_{d,\alpha}^{-1} -{B}_M \|\leq C e^{- \beta \sqrt{M}}, \quad \beta >0,
\end{equation}
where ${B}_M$ is defined as
\begin{equation}\label{Lapl Inv}
  {B}_M:=\sum\limits_{k=-M}^{M}c_{k}\bigotimes_{\ell=1}^{d}
  \exp(-t_{k}{\alpha_\ell {\Delta}_1}),\quad
t_{k}=e^{k\mathfrak{h}},\quad c_{k}=\mathfrak{h}t_{k},\quad \mathfrak{h}=\pi/\sqrt{M}. 
\end{equation}
\end{lemma}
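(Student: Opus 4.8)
The plan is to reduce the operator estimate to a one–dimensional one and then invoke the classical sinc (Stenger) quadrature theory. First I would observe that the $d$ summands of $\Delta_{d,\alpha}$ in (\ref{anis dLapl}) commute pairwise, so that $\exp(-t\Delta_{d,\alpha})=\bigotimes_{\ell=1}^{d}\exp(-t\alpha_{\ell}\Delta_{1})$, and since $\Delta_{d,\alpha}$ is symmetric positive definite the Laplace–transform identity gives $\Delta_{d,\alpha}^{-1}=\int_{0}^{\infty}\bigotimes_{\ell=1}^{d}\exp(-t\alpha_{\ell}\Delta_{1})\,dt$. The matrix $B_{M}$ in (\ref{Lapl Inv}) is exactly the truncated trapezoidal quadrature of this integral under the substitution $t=e^{u}$: with nodes $u_{k}=k\mathfrak{h}$ and weights $\mathfrak{h}$ one has $e^{u_{k}}=t_{k}$ and the Jacobian factor produces the weight $\mathfrak{h}t_{k}=c_{k}$.

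Next, because $\Delta_{1}$ is symmetric, $\Delta_{d,\alpha}$ and all the factors $\exp(-t_{k}\alpha_{\ell}\Delta_{1})$ are simultaneously diagonalizable: on a joint eigenvector whose $\Delta_{1}$–eigenvalues are $\mu_{j_{1}},\dots,\mu_{j_{d}}$, the operator $\Delta_{d,\alpha}$ acts by the scalar $\Lambda=\sum_{\ell=1}^{d}\alpha_{\ell}\mu_{j_{\ell}}$ and $B_{M}$ acts by $\sum_{k=-M}^{M}c_{k}e^{-t_{k}\Lambda}$. Hence
\[
\|\Delta_{d,\alpha}^{-1}-B_{M}\|=\max_{\Lambda\in\operatorname{spec}(\Delta_{d,\alpha})}\Big|\tfrac{1}{\Lambda}-\sum_{k=-M}^{M}c_{k}e^{-t_{k}\Lambda}\Big|,
\]
and the spectrum lies in $[\Lambda_{\min},\infty)$ with $\Lambda_{\min}=\big(\sum_{\ell}\alpha_{\ell}\big)\lambda_{\min}(\Delta_{1})>0$. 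This is the point where the dimension disappears: the right-hand side is a purely one-dimensional quantity in $\Lambda$, so any bound uniform in $\Lambda$ over the relevant half-line is automatically independent of $d$. After rescaling the operator (equivalently, rescaling $t_{k},c_{k}$ by $\Lambda_{\min}$) I would assume $\operatorname{spec}(\Delta_{d,\alpha})\subset[1,\infty)$.

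It then remains to bound $\big|\tfrac1\Lambda-\sum_{k}c_{k}e^{-t_{k}\Lambda}\big|$ for $\Lambda\ge 1$. Writing $\tfrac1\Lambda=\int_{-\infty}^{\infty}e^{u}e^{-e^{u}\Lambda}\,du$, the integrand $F(u)=e^{u}e^{-e^{u}\Lambda}$ extends holomorphically to the strip $|\operatorname{Im}u|<\pi/2$ (there $\operatorname{Re}(e^{u})>0$), decays like $e^{u}$ as $u\to-\infty$ and double-exponentially as $u\to+\infty$, uniformly for $\Lambda\ge1$. Splitting the error into a discretization part and a truncation part, the classical sinc estimate (see e.g.\ \cite{GaHaKh4:02,HaKhtens:04I}) bounds $\big|\int_{\mathbb{R}}F-\mathfrak{h}\sum_{k\in\mathbb{Z}}F(k\mathfrak{h})\big|$ by $O(e^{-2\pi a/\mathfrak{h}})$ for any strip half-width $a<\pi/2$, while the truncation remainder $\mathfrak{h}\sum_{|k|>M}F(k\mathfrak{h})$ is $O(e^{-M\mathfrak{h}})$, the dominant contribution being the slowly decaying left tail $\mathfrak{h}\sum_{k<-M}e^{k\mathfrak{h}}$ (the right tail is negligible because of the double-exponential decay). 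Balancing the two rates forces $2\pi a/\mathfrak{h}\sim M\mathfrak{h}$, i.e.\ $\mathfrak{h}\sim 1/\sqrt{M}$; with the choice $\mathfrak{h}=\pi/\sqrt{M}$ and $a$ close to $\pi/2$, both contributions are $O(e^{-\beta\sqrt{M}})$ for any $\beta<\pi$, which is exactly (\ref{appr_dLaplm1}) with $C,\beta$ absolute (in particular $d$-independent) constants. Conjecture~\ref{Exp-matr rank-r TT} would enter only at the very end, when one additionally wants $B_{M}$ to be a genuine low-parameter object: replacing each factor $\exp(-t_{k}\alpha_{\ell}\Delta_{1})$ by its rank-$r_{\Delta}$ QTT $\varepsilon$-approximation and using the triangle inequality combines the uniform rank bound $r_{\Delta}$ with (\ref{appr_dLaplm1}), at the cost of a QTT-rank for $B_{M}$ that is polylogarithmic in $\varepsilon$.

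The step I expect to be the main obstacle is making the scalar estimate genuinely uniform over the full, unbounded (and, before normalization, $N$-dependent) spectrum. The integrand $F$ peaks near $u\approx-\log\Lambda$, so for large $\Lambda$ the peak sits near the left edge of the quadrature window $[-M\mathfrak{h},M\mathfrak{h}]$, and one must check that no mass is lost there and, symmetrically, that the right-hand truncation tail is controlled uniformly in $\Lambda$ via the double-exponential decay; this is precisely what the normalization $\operatorname{spec}\subset[1,\infty)$ is designed to handle. Everything else — the commutativity reduction, the spectral diagonalization, and the citation of the sinc estimate — is routine.
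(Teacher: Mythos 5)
Your proposal is correct and follows essentially the same route as the proof the paper defers to (\cite{KhQuant:09}, going back to the sinc-quadrature constructions of \cite{GaHaKh4:02,HaKhtens:04I}): the Laplace-transform representation of $\Delta_{d,\alpha}^{-1}$, spectral reduction to the scalar quadrature error for $1/\Lambda$ on $[\Lambda_{\min},\infty)$, and the balance of the $O(e^{-\pi^{2}/\mathfrak{h}})$ discretization error against the $O(e^{-M\mathfrak{h}})$ left-tail truncation error with $\mathfrak{h}=\pi/\sqrt{M}$. Your observation that Conjecture \ref{Exp-matr rank-r TT} plays no role in the error bound (\ref{appr_dLaplm1}) itself and only enters for the QTT-rank bound on $B_{M}$ is also accurate, as is your handling of the uniformity in $\Lambda$, which matches the paper's remark that $C$ scales with $\|\Delta_{d,\alpha}^{-1}\|$.
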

Lemma \ref{Anis Lapl Inv} implies that the matrix family $\{B_M\}$ possesses the low-rank approximation  
(or preconditioner if $M$ is small) to the anisotropic $d$-Laplacian inverse ${\Delta}_{d,\alpha}^{-1}$,
whose ranks scale like $O(\log^2 \varepsilon)$, where  $\varepsilon$ is the error threshold.
The constant $\beta>0$ depends logarithmically on the grid size $N$, while $C$ scales linearly 
in the norm of inverse matrix $\|{\Delta}_{d,\alpha}^{-1}\|$.

The following statement summarizes the previous discussion and the related results in \cite{KazKhor_1:10}.
\begin{proposition}\label{Df:TT-Rank_Laplac}
For $ d\geq 2$ the canonical, TT and QTT rank estimates hold:
\begin{align*} 
rank_C({\Delta}_d)  & = d, \quad rank_{TT}({\Delta}_d)=2, \quad rank_{QTT}({\Delta}_d) = 4. 
\\ \nonumber
rank_{QTT}({\Delta}_1) & = 3, \quad rank_{QTT}({\Delta}_1^{-1}) \leq 5. 
\end{align*}
Given $\varepsilon>0$, then for the $\varepsilon$-rank we have
\begin{align} \label{QTT_LaplInv}
rank_{TT}({\Delta}_d^{-1}) & \leq rank_C({\Delta}_d^{-1}) \leq C
| \log \varepsilon | \log N. 
\end{align}
Moreover, under claims of Conjecture \ref{Exp-matr rank-r TT} there holds 
\begin{equation}\label{QTT_LaplInv}
 rank_{QTT}({\Delta}_d^{-1})  \leq C | \log \varepsilon |^2 \log N.
\end{equation}
In both cases the constant $C$ does not depend on $d$.
\end{proposition}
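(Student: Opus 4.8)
The plan is to treat the statement in two parts: the exact integer ranks of $\Delta_d$, $\Delta_1$ and $\Delta_1^{-1}$, obtained from explicit rank‑revealing factorizations together with matching lower bounds, and the $\varepsilon$‑rank estimates for $\Delta_d^{-1}$, which rest on exponential‑sum (sinc‑quadrature) approximation of the Cauchy integral for the inverse.

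For the exact ranks, the upper bounds $rank_C(\Delta_d)\le d$ and $rank_{TT}(\Delta_d)\le 2$ are read off directly from the Kronecker decomposition (\ref{dLAPL}) and from the explicit bidiagonal $\Join$‑factorization displayed after it, while $rank_{QTT}(\Delta_d)\le 4$ follows by composing that rank‑$2$ TT skeleton with the level‑wise $q$‑folding: $I_N$ folds to a rank‑$1$ QTT matrix and, by the explicit rank‑$3$ QTT decomposition of $\Delta_1$ recalled above, $\Delta_1$ folds to a rank‑$3$ QTT matrix, and inside each physical mode the two block entries $I_N$ and $\Delta_1$ of the TT cores combine additively to give $3+1$. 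The matching lower bounds ($=d,2,4$, and $rank_{QTT}(\Delta_1)=3$) I would get from standard unfolding arguments — for instance the first TT‑unfolding of $\Delta_d$ (mode $1$ against the rest) has matrix rank exactly $2$ because $\Delta_1$ and $I_N$ are linearly independent, the canonical rank cannot drop below $d$ by the argument used for the additive function tensor in \cite{Landsberg_book:2012}, and the corresponding quantized unfoldings pin down $4$ and $3$. The bound $rank_{QTT}(\Delta_1^{-1})\le 5$ I would simply quote from the explicit construction of \cite{KazKhor_1:10}.

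For $\Delta_d^{-1}$, the inequality $rank_{TT}\le rank_C$ is the elementary fact that an $R$‑term canonical representation is in particular a TT representation with all ranks $\le R$. For $rank_C(\Delta_d^{-1})$ I would use that $\Delta_d=\sum_{\ell=1}^d I_N\otimes\cdots\otimes\Delta_1\otimes\cdots\otimes I_N$ is a sum of commuting SPD terms, so $e^{-t\Delta_d}=\bigotimes_{\ell=1}^d e^{-t\Delta_1}$ is a rank‑$1$ tensor operator for every $t>0$ and $\Delta_d^{-1}=\int_0^{\infty} e^{-t\Delta_d}\,dt$; after the substitution $t=e^{s}$ a trapezoidal/sinc rule turns this integral into exactly $B_M=\sum_{k=-M}^{M}c_k\bigotimes_{\ell=1}^d e^{-t_k\Delta_1}$, a canonical tensor operator of rank $2M+1$, and the standard exponential‑sum error analysis for $1/\lambda$ on the spectral interval of $\Delta_d$ — whose condition number is $O(N^2)$ — gives accuracy $\varepsilon$ with $M=O(\log N\,|\log\varepsilon|)$, hence $rank_C(\Delta_d^{-1})\le C|\log\varepsilon|\log N$ with $C$ independent of $d$ since the construction is dimension‑blind. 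For the QTT estimate I would instead invoke Lemma \ref{Anis Lapl Inv} with $\alpha_\ell\equiv 1$: it already supplies $\|\Delta_d^{-1}-B_M\|\le C e^{-\beta\sqrt{M}}$ with $\beta$ of order $(\log N)^{-1/2}$, so $\varepsilon$‑accuracy needs only $M=O(\log N\,|\log\varepsilon|^2)$ terms; and under Conjecture \ref{Exp-matr rank-r TT} each factor $e^{-t_k\Delta_1}$ admits a QTT approximation of rank $\le r_\Delta$ bounded uniformly in $N$ and in $t_k$, so $q$‑folding $B_M$ level‑wise and noting that the $2M+1$ summands inflate the rank at the interior quantization interfaces by at most the factor $2M+1$ yields $rank_{QTT}(\Delta_d^{-1})\le (2M+1)\,r_\Delta\le C|\log\varepsilon|^2\log N$.

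The main obstacle is the quantitative error analysis of the exponential‑sum / sinc quadrature: one must track precisely how the number of nodes $M$ depends jointly on the target accuracy $\varepsilon$ and on the spectral condition number $\kappa(\Delta_d)=O(N^2)$ — equivalently on the lower cut‑off $\lambda_{\min}\sim N^{-2}$, where the integrand $e^{-t\lambda}$ decays only slowly — since this bookkeeping is exactly what decides whether the exponents of $|\log\varepsilon|$ and $\log N$ come out as claimed (linear in each for $rank_C$, quadratic in $|\log\varepsilon|$ for $rank_{QTT}$). A secondary but genuine point is the QTT‑rank arithmetic when the $d$‑fold physical tensor structure of $B_M$ is interleaved with the level‑wise folding inside each mode, i.e.\ verifying that the $2M+1$ parallel exponential factors and the uniform bound $r_\Delta$ only multiply and do not interact more badly; the exact‑rank lower bounds are by comparison routine unfolding computations.
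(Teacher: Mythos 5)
Your proposal is correct and follows essentially the same route as the paper, which proves this proposition only by summarizing the preceding discussion: the explicit Kronecker, TT and QTT factorizations for the exact ranks, the exponential-sum (sinc-quadrature) representation $B_M$ of Lemma \ref{Anis Lapl Inv} for the canonical/TT $\varepsilon$-rank, and Conjecture \ref{Exp-matr rank-r TT} for the QTT bound. The extra details you supply (the unfolding lower bounds and the $M$-versus-$(\varepsilon,N)$ bookkeeping distinguishing the $|\log\varepsilon|\log N$ and $|\log\varepsilon|^2\log N$ regimes) are exactly what the paper delegates to \cite{KazKhor_1:10} and the remarks following Lemma \ref{Anis Lapl Inv}.
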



The $d$-dimensional FFT over $N^{\otimes d}$ grid can be realized
on the rank-$k$ tensor  
with the linear-logarithmic cost $O(d k N\log_2 N)$,  due to the rank-$1$
factorized representation
\[
{\cal F}_{N}^{(d)}=(F_{N}^{(1)}\otimes I ...\otimes I)
(I\otimes F_{N}^{(2)}...\otimes I ) ... (I\otimes I ...
\otimes F_{N}^{(d)})\equiv F_{N}^{(1)}\otimes ... \otimes F_{N}^{(d)},
\]
where $ F_{N}^{(\ell)}\in \mathbb{R}^{N \times N} $ represents
the univariate FFT matrix along mode $\ell$.

The super-fast Fourier  transform of $N$-$d$ tensors can be computed in $\log$-volume complexity, 
$O(d \log^2 N)$, by using the QTT approximation as proposed in \cite{DoKhSav:11}.

Direct convolution transform of $N$-$d$ tensors represented in the canonical/Tucker  formats
\cite{Khor1:08,KhKh3:08}, see (\ref{eqn:canon_conv}),
can be implemented in $O(d \log N)$ operations by representing canonical/Tucker vectors
by the low-rank QTT tensors (canonical-QTT or Tucker-QTT formats) 
\cite{VeKhSchn_QTTHF:11,KhVBSchn_TEI:12}.

The super-fast convolution transform of the complexity $O(d \log^2 N)$
using the explicit QTT representation of multilevel Toeplitz matrices is developed in 
\cite{KazKhTyrt_Toepl:11}. 

The super-fast QTT wavelet transform of logarithmic complexity by the exact low-rank
representation of the wavelet transform matrix was described in \cite{KhMiao:13}
(see also \cite{OselTyr_WT:11} for the related discussion).  

\subsection{Multilinear algebra and tensor-rank truncation}\label{ssec:MLA_TTr}

Low-parametric tensor formats provide  prerequisites for 
multidimensional algebraic calculus since the storage complexity of rank-structured tensors
scales linearly in dimension $d$. However, the numerical capability of $\varepsilon$-truncated 
tensor operations is essentially determined by the following issues:
\begin{itemize}
\item Understanding of nonlinear tensor approximation in the canonical, Tucker, and MPS/TT formats
\item Developments on efficient multilinear algebra and rank optimization algorithms
\item Approximation theory for functions and operators in (quantized) tensor spaces
\item Construction of stable iterative methods for solving multidimensional PDEs in tensor formats.
\end{itemize}

It is worth to note that the important multilinear algebraic
operations with canonical, Tucker and TT tensors can be  
implemented with linear complexity scaling in the univariate mode size $N$ and in the dimension $d$ 
by representing them in terms of tensor operations on rank-$1$ elements. 

For example, following \cite{KhBoVe1:06}, for the rank-$R_1$ and rank-$R_2$ canonical tensors 
$\textsf{\textbf{X}},\textsf{\textbf{Y}}\in\mathbb{R}^{\cal I}$, ${\cal I}:=\{1,...,N\}^d$, we have 
$$
\langle \textsf{\textbf{X}},\textsf{\textbf{Y}} \rangle =
\sum\limits_{k=1}^{R_1} \sum\limits_{m=1}^{R_2}
\prod\limits_{\ell=1}^d \left\langle  \textsf{X}^{(\ell)}_k,\textsf{Y}^{(\ell)}_m \right\rangle,
$$
while the Hadamard product is computed by 
\begin{equation*}\label{eqn:TIP_Had}
\textsf{\textbf{X}} \odot \textsf{\textbf{Y}}:=
\sum\limits_{k=1}^{R_1} \sum\limits_{m=1}^{R_2}
\left( \textsf{X}^{(1)}_k \odot  \textsf{Y}^{(1)}_m  \right)
\otimes \ldots \otimes
\left(  \textsf{X}^{(d)}_k \odot   \textsf{Y}^{(d)}_m \right).
\end{equation*}

We define the {\it discrete convolution product} of two convolving  tensors \cite{KhBoVe1:06} by
\[
\textsf{\textbf{X}} \ast \textsf{\textbf{Y}}:=\left[\sum\limits_{{\bf k} \in 
    {\cal I}}{\textsf{X}}_{\bf k}\textsf{Y}_{{\bf j}-{\bf k}}\right]_{{\bf j}
  \in{\cal J}}, \quad {\cal J}:=\{1,...,2N-1\}^d.
\]
The convolution product of two tensors in the canonical format
can be realized in $O(d R_1 R_2  N \log N )$ operations \cite{KhBoVe1:06} relying on the representation
\begin{equation}\label{eqn:canon_conv}
\textsf{\textbf{X}} \ast  \textsf{\textbf{Y}}  :=
\sum\limits_{k=1}^{R_1} \sum\limits_{m=1}^{R_2}
\left(\textsf{X}^{(1)}_k \ast \textsf{Y}^{(1)}_m  \right) \otimes \ldots \otimes
\left(  \textsf{X}^{(d)}_k \ast   \textsf{Y}^{(d)}_m \right)\in \mathbb{R}^{\cal J},
\end{equation}
where $\textsf{X}^{(\ell)}_k \ast \textsf{Y}^{(\ell)}_m  \in \mathbb{R}^{2N-1}$ denotes the convolution 
product of $N$-vectors defined as
\[
\textsf{X}^{(\ell)}_{k}\ast \textsf{Y}^{(\ell)}_{m}=
\left[\sum\limits_{n=1}^n x_n y_{j-n}\right]_j, \quad j=1,...,2N-1.
\]
Hence, the one-dimensional convolution can be performed by FFT in
$O(N\log N)$ operations. Similarly, the above mentioned tensor operations for Tucker tensors
can be reduced to 1D linear algebra \cite{KhBoVe1:06,Khor1:08}.

The formatted implementation of the scalar product of TT tensors \cite{Osel-TT:09}
$$
\langle \textsf{\textbf{X}},\textsf{\textbf{Y}} \rangle =
\langle \textsf{\textbf{X}} \odot \textsf{\textbf{Y}} ,{\bf 1} \rangle
$$
can be performed by using the Hadamard product in TT format
$
\textsf{\textbf{Z}}= \textsf{\textbf{X}} \odot \textsf{\textbf{Y}}:\;  
\textsf{Z}^{(k)}_{i_k}=\textsf{X}^{(k)}_{i_k}\otimes \textsf{Y}^{(k)}_{i_k}.
$

Furthermore, the standard multilinear operations like addition, the scalar, Hadamard, 
contracted and convolution products, etc. in the QTT, QTT-canonical or QTT-Tucker representations
can be implemented in $O(d \log N)$ operations and storage costs.
This allows fast computations on large spatial grids without practical limitations on $N$, 
where $N$ is usually associated with the univariate grid size.

Representation of tensors in low separation rank formats is the key
point in the design of fast tensor-structured numerical methods for large-scale higher
dimensional simulations. In fact, it allows the implementation of basic linear and
bilinear algebraic operations on tensors mentioned above
with linear complexity in the univariate tensor size
(see \cite{Khor1:08,KhorSurv:10,KazKhor_1:10,VeKh_Diss:10,DoKhSav:11}).
However, the bilinear tensor operations normally increase the
separation rank of the resultant tensor.

To perform  computation over  nonlinear set (manifold) of rank-structured tensors ${\cal S}$
(say, the canonical, Tucker, MPS/TT, QTT, and QTT-Tucker formats) with controllable complexity,
we need to perform a ``projection'' of the current iterand in ${\cal S}_0 \supset {\cal S}$ onto 
that manifold ${\cal S}$ by using the ``formatted`` tensor operations. This action
  is fulfilled by implementing the tensor truncation operator
  ${T_{\cal S}}:\mathbb{W}_{{\bf n},d} \to {\cal S}$ defined by
\begin{equation}\label{Cost f1}
 \textsf{\textbf{A}}_0\in {\cal S}_0\subset \mathbb{W}_{{\bf n},d} :
 \quad {T}_{\cal S} \textsf{\textbf{A}}_0=  \operatorname{argmin}_{ \textsf{\textbf{T}}\in {\cal S} }
 \|\textsf{\textbf{A}}_0 - \textsf{\textbf{T}}\|,
\end{equation}
that reduces to a challenging nonlinear approximation problem.
The replacement of $\textsf{\textbf{A}}_0$ by its approximation in ${\cal S}$
is called the {\it tensor truncation} to ${\cal S}$ and denoted
by ${T}_{\cal S} \textsf{\textbf{A}}_0 $. 
In practice, the computation of the minimizer
${T}_{\cal S} \textsf{\textbf{A}}_0$ can be performed only approximately.
The set ${\cal S}$ of rank-structured tensors can be chosen adaptively in order to
control the approximation error $\varepsilon>0$,
\[
 \|\textsf{\textbf{A}}_0 - {T}_{\cal S} \textsf{\textbf{A}}_0\|\leq \varepsilon.
\]
 
In the case of Tucker, TT/QTT and QTT-Tucker
formats the quasi-optimal approximation can be computed by conventional QR/SVD algorithm 
\cite{KoldaB:07,OsTy_TT:09,GrasHTUCK:09,DoKh_QTTT:12},
also known in the physical literature as the Schmidt decomposition.
In particular, the Tucker tensors can be approximated by the so-called higher order SVD (HOSVD),
\cite{DMV-SIAM:00}.
Robust SVD-based algorithms are applicable since the Tucker and TT ranks can be controlled 
by a certain matrix rank.
Indeed, for MPS/TT format we have the equivalent definition in terms of 
the TT-unfolding matrix,
$TT[{\bf r}]:=\{\textsf{\textbf{A}}\in \mathbb{V}_{\bf n}: \rank 
A_{[p]}\leq r_p$, p=1,...,d-1 \}, where the TT-unfolding matrix $A_{[p]}$ is defined by
\begin{equation*}\nonumber
A_{[p]}:= \textsf{\textbf{A}}
(\underbrace{{j_1 \: j_2\: \ldots j_p}}_{\mbox{column ind.}}; 
\underbrace{\textcolor{blue}{j_{p+1}\ldots j_d }}_{\mbox{row ind.}}).
\end{equation*}
For the Tucker format we have the alternative definition
$
{\cal T}[{\bf r}]:=\{ {\textsf{\textbf{A}}} \in \mathbb{V}_{\bf n}: \mbox{rank}\, 
A_{(p)} \leq r_p, p=1,...,d \} $, where the Tucker unfolding matrix $A_{(p)}$ is defined by
\begin{equation*}\nonumber
A_{(p)}:= \textsf{\textbf{A}}
(\underbrace{\textcolor{blue}{j_1 \: j_2\: \ldots j_{p-1}}}_{\mbox{row ind.}}; 
\underbrace{{j_p}}_{\mbox{column ind.}};
\underbrace{\textcolor{blue}{j_{p+1}\ldots j_d }}_{\mbox{row ind.}}).
\end{equation*}

In turn, the canonical and TC ranks can not be identified with certain matrix ranks
that may result in instabilities of approximation schemes.
Approximation in the $R$-term canonical format is considered as a subtle problem that cannot 
in general be solved by  a stable algorithms in polynomial cost. 
One of the reasons is that the set of canonical tensors with rank $\leq R$ is not closed as shown by the following
example.
\begin{example} \label{exm:can_Nclosed}
The tensor $\textsf{\textbf{F}}$, with $rank_{Can}(\textsf{\textbf{F}})={d}$,
generated by sampling of the function $f(x)=x_1+...+x_d$ on a tensor grid
can be approximated with arbitrary precision by rank-$2$ elements
$$
\textsf{\textbf{F}}= \lim_{\varepsilon\to 0}\frac{{\bigotimes}_{\ell=1}^d(1+ \varepsilon X_\ell)-1}{\varepsilon}.
$$ 
\end{example}
However, for some classes of functions and operators the robust analytic 
methods based on $R$-term explicit $\operatorname{sinc}$-quadrature approximations can be 
successfully applied 
(see \cite{KhorSurv:10} and references therein). 

Notice that the so-called reduced HOSVD (RHOSVD) method introduced in \cite{KhKh3:08}
provides robust rank reduction for the canonical tensors with large initial rank.
This is  a two-step method which, first, reduces the canonical tensor to low-rank Tucker
form by SVD-based low-rank approximation of the factor matrices,
and then applies the ALS type rank reduction iteration to the small-size Tucker core to represent it 
in canonical format.
The RHOSVD method was successfully applied  in electronic structure calculations
 \cite{KhKh3:08,VKh_Ex:09,VeKh_Diss:10,KhKhFl_Hart:09}.

\section{Tensor numerical methods for $d$-dimensional PDEs}\label{sec:QTT-solvers}

In this section, we discuss the benefits of tensor numerical methods (TNM) in two 
important applications: the nonlinear Hartree-Fock (HF) equation in electronic structure 
calculations (\S\ref{ssec:HaFockeqn})
and the real-time parabolic equations in many-particle dynamics (\S\ref{ssec:dynamics}).

The numerical challenge in the HF calculations is due to the presence of large number of
complicated convolution-type integrals in $\mathbb{R}^3$, multiply recomputed 
within the iterations on nonlinearity, as well as hard complexity scaling of the 
traditional methods with respect to the size of a molecular system
(see for example \cite{LeBris1:05,SaadRev:10} and references therein).
At this point the beneficial features
of tensor computations has been successfully realized in the form of fast 
grid-based black-box HF-solver that manifests linear 
complexity in the univariate grid size 
\cite{KhKh3:08,KhKhFl_Hart:09,VeKh_Diss:10,VKH_solver:13,KhVBSchn_TEI:12}.
The latter can be reduced to the logarithmic scale by implementation in quantized tensor spaces
\cite{VeKhSchn_QTTHF:11}.
Further steps toward TNMs for post Hartree-Fock calculations \cite{VeKhorMP2:13},
and for solving the Hartree-Fock problem for large lattice-structured and periodic systems 
\cite{VeBoKh:Ewald:14,VeKhorCorePeriod:14} are the subject of current research.

The real space numerical methods combined with FEM approximations 
have become attractive in electronic structure calculations as the possible alternative
to the traditional approaches 
(see \cite{LaPySu:1986,HaFaYaBeyl:04,TenAppInTuk:07,BiVale:11,Frediani:13,RahOsel:13} 
and references therein).

For a class of multi-dimensional parabolic problems including, in particular,
the molecular Schr\"odinger, the Fokker-Planck and master equations, 
the computational challenges arise due to the curse of dimensionality. 
We discuss the main issues which allow to understand how the $d$-dimensional dynamics
can be simulated in quantized tensor spaces with the linear complexity in $d$
and in log-log complexity in the mesh size for simultaneous space-time discretization. 
Further details can found in \cite{KhorSurv:10,DoKhOsel:11,GavlKh_Caley:11,DoKh_CME_NLAA:13}.
Exposition of tensor methods based on the Dirack-Frenkel variational principle implemented on the 
Tucker and MPS/TT tensor manifolds can be found in \cite{LubichBook:08,lubich-dynht-2012,okhs-dyntt-2012} 
and references therein. 

Recently, the  TNM were shown to be efficient for solving parameter dependent PDEs in the case of
high dimensional parametric space \cite{KhSchw1:09,KhOsel_2_SPDE:10,benner-spde-2013,DolKhLitMat:14}. 

Another popular topic in numerical analysis of
multidimensional PDEs is concerned with the so-called greedy methods and their applications 
\cite{BrisMaday-greedy_pdes-2009,Binev-conv_rate_greedy-2011,Cances-greedy-2011,Sueli-greedy_FP-2011}, 
which became attractive since the simplicity of greedy algorithms: 
the main step usually includes either rank-$1$ corrections low-rank tensors in other
formats commonly used in practice.
Due to the page limits, these topics will not be further addressed in this paper.

\subsection{Hartree-Fock equation in electronic structure calculations}\label{ssec:HaFockeqn}

\subsubsection{Problem setting}\label{sssec:HFE_Probl}

The HF equation for determination
of the ground state energy of a  molecular system consisting of $M$ nuclei
and $N_{orb}$ electrons  (closed shell case) is given by the following 
nonlinear eigenvalue problem in $H^1(\mathbb{R}^3)$ \cite{LeBris1:05},
\begin{equation} \label{eqn:HaFockProbl}
  ({\cal F}  \phi_i )({x}) = \lambda_i \, \phi_i({{x}}),\quad
  \int_{\mathbb{R}^3}\phi_i({x})\phi_j({x}) \, \mathrm{d} {x}=\delta_{ij},
  \; i,j=1,...,N_{orb},
\end{equation}
where  the non-linear integral-differential Fock operator ${\cal F}$ is given by
\begin{equation} \label{eqn:FockOper}
{\cal F} :=-\frac{1}{2} \Delta -V_c  + V_H + {\cal V}_E  ,\quad 
V_c =\sum_{\nu =1}^{M} \frac{Z_{\nu}}{\|{x}-{A}_{\nu}\|},
\end{equation}
with the Hartree potential, $V_H(x)$,  and the nonlocal exchange operator, ${\cal V}_E$,
defined by
$$
V_H({x}):=  \int_{\mathbb{R}^3}
\frac{\tau({y},{y})}{\|{x} - {y}\|} \,\mathrm{d} \textbf{y},\quad \mbox{and} \quad
{\cal V}_E\phi:= - \frac{1}{2}\int_{\mathbb{R}^3}
\frac{\tau({x},{y})}{\|{x} - {y}\|} 
\phi({y})\,\mathrm{d} {y},
$$
respectively.
Here, $1/{\|  \cdot \|}: \mathbb{R}^3 \to  \mathbb{R}$ corresponds to
the Newton potential, and $Z_{\nu} \in \mathbb{R}_{+}$,
${A}_{\nu} \in \mathbb{R}^3$ ($\nu=1,...,M$) specify charges and positions
of $M$ nuclei. The electron density matrix
$\tau:\mathbb{R}^3\times\mathbb{R}^3  \to\mathbb{R} $,
is given by
$
\tau({x},{y}) =  2 \sum_{i=1}^{N_{orb}} \phi_i({x})\phi_i^*({y}),
$
specifying the electron density $\rho(x)=\tau({x},{x})$.


Notice that the Hartree-Fock equation is a nonlinear eigenvalue problem in a sense, that
one should solve it when the nonlinear part $V_H + {\cal V}_E$
of the governing operator  depending on the eigenvectors is unknown. 
This dependence is expressed by the 3D convolution transform with the Newton convolving 
kernel, while the electron density $\rho(x)$ contains multiple 
strong singularities corresponding to each nuclei location. 
Therefore solution of the  HF equation requires iterative solvers, with multiply repeated
recalculation of these convolution integrals.

Usually, the Hartree-Fock equation is approximated by
the standard Galerkin projection of the initial problem
(\ref{eqn:HaFockProbl}) posed in $H^1(\mathbb{R}^3)$ (see \cite{LeBris1:05} for more details).
For a given finite Galerkin basis set $\{g_\mu \}_{1\leq \mu \leq N_b}$,
$g_\mu\in H^1(\mathbb{R}^3) $, the
molecular orbitals $\phi_i$ are expanded (approximately) by
 \begin{equation}\label{expand}
\phi_i=\sum\limits_{\mu=1}^{N_b} C_{\mu i} g_\mu, \quad i=1,...,N_{orb},
\end{equation}
yielding the Galerkin system of nonlinear equations
for the coefficients matrix $C=\{c_{\mu i} \}\in \mathbb{R}^{N_b \times N_{orb} }$,
concatenating the eigenvectors $C_i \in \mathbb{R}^{N_b}$
(and the density matrix $D=2CC^\ast \in \mathbb{R}^{N_b\times N_b}$)
\begin{equation}\label{SCF_EVP}
  F(D) C = SC\Lambda, \quad \Lambda= diag(\lambda_1,...,\lambda_{N_b}), \quad
  C^T SC =  I_{N_b},
\end{equation}
where $S$ is the overlap (stiffness) matrix for $\{g_\mu \}_{1\leq \mu \leq N_b}$,
and the Fock matrix
\begin{equation}
\label{G_Fock}
 F(D)= H + J(D) + K(D),
\end{equation}
is a sum of 
the stiffness matrix $H=\{h_{\mu \nu}\}$ of the core Hamiltonian
${\cal H}=-\frac{1}{2} \Delta + V_c $ (the single-electron integrals),
\[
h_{\mu \nu}= \frac{1}{2} \int_{\mathbb{R}^3}\nabla g_\mu \cdot \nabla g_\nu dx +
\int_{\mathbb{R}^3} V_c(x) g_\mu g_\nu dx, \quad 1\leq \mu, \nu \leq N_b,
\]
and the two nonlinear terms
$J(D) + K(D)$,
representing the Galerkin approximation to the Hartree and
exchange operators. This is the main computational task, which is traditionally calculated  by using the 
two-electron integrals tensor $\textsf{\textbf{B}}=[b_{\mu \nu \kappa \lambda}]$,
defined as follows:
Given the finite basis set $\{g_\mu \}_{1\leq \mu \leq N_b}$, $g_\mu\in H^1(\mathbb{R}^3) $, the
associated fourth order two-electron integrals (TEI) tensor,
$\textsf{\textbf{B}}=[b_{\mu \nu \kappa \lambda}]$, is defined entrywise by
\begin{equation} \label{btensor}
b_{\mu \nu \kappa \lambda}= \int_{\mathbb{R}^3}\int_{\mathbb{R}^3}
\frac{g_\mu(x) g_\nu(x) g_\kappa(y)g_\lambda(y) }{\| x-y \|} dx dy, \quad
\mu, \nu, \kappa, \lambda \in \{1,...,N_b\}.
\end{equation}
In the straightforward implementation based on the analytically precomputed integrals,
the computational and storage complexity for the TEI tensor is of order $O(N_b^4)$, or even $O(N_b^5)$,
that becomes non-tractable already for $N_b$ of order of several hundred.

Given TEI tensor, the $N_b\times N_b$ matrices $J(D)$ and $K(D)$ can be represented by
\begin{equation}\label{C-K matr}
J(D)_{\mu \nu}= \sum\limits_{\kappa, \lambda=1}^{N_b}
b_{\mu \nu, \kappa \lambda}D_{\kappa \lambda},\quad
K(D)_{\mu \nu}= - \frac{1}{2} \sum\limits_{\kappa, \lambda=1}^{N_b}
b_{\mu \lambda, \nu \kappa}D_{\kappa \lambda},
\end{equation}
with the low-rank symmetric density matrix, $D= 2 C C^T \in \mathbb{R}^{N_b\times N_b}$,
such that 
$
rank(D) =N_{orb}\ll N_b.
$
Equations (\ref{C-K matr}) can be rewritten in terms of the TEI matrix 
$B=[b_{\mu \nu, \kappa \lambda}]\in \mathbb{R}^{N_b^2 \times N_b^2}$.

The total energy is computed as
\[
E_{HF}= 2 \sum\limits_{i=1}^{N_{orb}} \lambda_i - \sum\limits_{i=1}^{N_{orb}} \left( \widetilde{J}_{i}
- \widetilde{K}_{i} \right),
\]
where $\widetilde{J}_{i}=(\phi_i,V_H \phi_i)_{L^2}= \langle C_i, J C_i\rangle$ and
$\widetilde{K}_{i}=(\phi_i, K \phi_i)_{L^2} = \langle C_i, K C_i\rangle$,  $i=1,...,N_{orb}$,
are the  Coulomb and exchange integrals in the basis of orbitals $\phi_i $. 
The resulting ground state energy of a molecule, $E_0$, for the given geometry
of nuclei, includes the nuclei repulsion energy $E_{nuc}$,
\begin{equation}
\label{eqn:ener_E0}
 E_0= E_{HF} + E_{nuc},\quad \mbox{ where } \quad 
E_{nuc}= \sum^M_{k=1} \sum^M_{m <k} \frac{Z_k Z_m}{\|x_k- x_m\|}.
\end{equation}
The standard quantum chemical implementations are based on the analytically 
precomputed set of the  two-electron integrals (\ref{btensor}) 
in a naturally separable Gaussian basis 
 with the computational and storage complexity for the TEI tensor  
of order $O(N_b^4)$, or even $O(N_b^5)$,
that becomes non-tractable already for $N_b$ of order of several hundred.


\subsubsection{Grid-based tensor numerical methods}\label{sssec:HF_TensorMeth}

The tensor-structured  numerical methods, both the name and the concept,
appeared during the work on the 3D grid-based tensor approach to the solution of the 
Hartree-Fock equation.
They lead  to ``black-box'' numerical treatment of the Hartree-Fock 
problem based on the  low-rank representation of the basis functions 
in a volume box,  using the $n\times n\times n$
$3D$  Cartesian grid  positioned arbitrarily with respect to the atomic centers 
\cite{KhKhFl_Hart:09,KhorVBAndrae:12,KhVBSchn_TEI:12,VeKh_Diss:10}.
In 2008 \cite{KhKh3:08,VKh_Ex:09} it was shown that within the tensor-structured paradigm
the core Hamiltonian and the $3D$ convolutions with the Newton kernel, 
involved in the Coulomb and exchange operators, can calculated in $1D$ complexity
by the rank-structured tensor operations reduced to $1D$ convolutions, Hadamard and scalar
products \cite{Khor1:08,KhKh3:08,VKH_solver:13}. 
Due to  elimination  of the analytical integrability requirements 
it gives a choice to use rather general physically relevant basis sets 
represented on the grid. High accuracy is achieved due to 
grid-sizes up to the order of $n \simeq 10^6$, yielding a volume box of size $n^3 \simeq 10^{18}$. 
It corresponds to mesh  resolution up to $h = 10^{-5}$ $\stackrel{\circ}{A}$ 
(close to sizes of atomic radii)
in the volume box with the equal sizes of $ 20\, \stackrel{\circ}{A}$ 
for each spatial variable. 
Matlab on a Sun station is used for all algorithms, without parallelizations and supercomputing.

Chronologically, two different approaches have been developed
for the 3D grid-based tensor-structured solution of the HF equation. Both use the rank-structured 
calculation of the core Hamiltonian \cite{KhorVBAndrae:12,VKH_solver:13} for a given grid-based
basis set.
\begin{itemize}
 
\item The tensor solver I does not use the two-electron integrals. Instead, the Coulomb and exchange
operators are recomputed ``on the fly'' using the refined 3D grids and rank-structured (1D) operations. 
\cite{VKh_Ex:09,KhKhFl_Hart:09,VeKh_Diss:10}. This approach has low storage demand, but might be time
consuming. It can be applicable to the Kohn-Sham type models.

\item The  ``black-box`` solver II based on calculation of the TEI matrix $B$ by the truncated 
Cholesky decomposition 
and the redundancy-free factorization by the algebraically reduced product basis
yielding the reduced storage consumption $O(N_b^3)$ \cite{VeKhorMP2:13,VeKhSchn_QTTHF:11,VKH_solver:13}. 
Its performance in time and accuracy is compatible with the benchmark packages 
in quantum chemistry based on analytical pre-calculation of involved multidimensional integrals.
\end{itemize}

In the following, we briefly discuss the tensor algorithm for computation of 
TEIs used in the solver II.     
We suppose that all basis functions $\{g_\mu \}_{1\leq \mu \leq N_b}$, 
are supported by the finite volume  box $\Omega=[-b,b]^3 \in \mathbb{R}^3$,
and assume, for ease of presentation, that $rank(g_\mu)=1$. 
Introducing the $n \times n \times n$ Cartesian grid over $\Omega$ and using the standard 
collocation discretization in the volume by piecewise constant 
basis functions, we define a grid-based tensor representation of the initial basis 
set $g_\mu(x) \in \mathbb{R}^3$, $\mu=1,\ldots N_b$,
\[
 g_\mu(x)=g_\mu^{(1)}(x_1)g_\mu^{(2)}(x_2)g_\mu^{(3)}(x_3) \approx {\bf G}_\mu 
=G_\mu^{(1)}\otimes G_\mu^{(2)} \otimes G_\mu^{(3)}\in \mathbb{R}^{n \times n \times n}.
\]
Define the respective product-basis tensor  
$$
\textsf{\textbf{G}}=[{\bf G}_{\mu\nu}] \in 
\mathbb{R}^{N_b\times N_b\times n^{\otimes 3}}
\quad \mbox{with} \quad {\bf G}_{\mu\nu}= {\bf G}_\mu \odot {\bf G}_\nu 
\in \mathbb{R}^{n^{\otimes 3}},
$$ 
where $\mu, \nu \in \{1,...,N_b\}$,
then both the TEI tensor and matrix  are represented by tensor operations,
\begin{equation} \label{gridb}
\textsf{\textbf{B}}= 
\textsf{\textbf{G}}\times_{n^{\otimes 3}}\textsf{\textbf{P}} \ast_{n^{\otimes 3}} \textsf{\textbf{G}},
\quad 
 b_{\mu\nu,\kappa\lambda}=
\left\langle {\bf G}_{\mu\nu} , \textsf{\textbf{P}}
\ast {\bf G}_{\kappa\lambda} \right\rangle_{n^{\otimes 3}}.
\end{equation}
Here the rank-$R_{\cal N}$ canonical tensor 
$\textsf{\textbf{P}}=\sum\limits_{k=1}^{R_{\cal N}}
{P}^{(1)}_{k}\otimes {P}^{(2)}_{k}\otimes {P}^{(3)}_{k} \in \mathbb{R}^{n^{\otimes 3}}$
approximates the Newton potential $\frac{1}{\|x\|}$ 
(see \cite{BeHaKh:08,KhKh3:08} for more details), 
$\ast$ stands for the 3D tensor convolution,
and $\odot$ denotes the Hadamard product of tensors.

Though tensor methods reduce the multidimensional integration to 1D complexity operations, 
the direct tensor-structured
evaluation of (\ref{gridb}) needs a storage size of at least, $O(R_{\cal N} N_b^2 n)$, which can be 
prohibitive for large $N_b\sim 10^2$  and  $n\approx 10^5$.
We apply the  RHOSVD-type factorization \cite{KhKh3:08} to the $4$th order
tensor $\textsf{\textbf{G}}$
by approximating its site matrices, $G^{(\ell)}\in \mathbb{R}^{n \times N_b^2}$, ($\ell=1,2,3$)
in a ``squeezed'' factorized  form, $G^{(\ell)} \cong U^{(\ell)} {V^{(\ell)}}^T$, 
according to the chosen $\varepsilon$-truncation. This step can be implemented 
by the truncated SVD in combination with incomplete truncated Cholesky decomposition.

This provides the construction of dominating subspaces in the $x$-, $y$- and $z$- components in 
the product basis set defined by an $n\times R_\ell$  matrix 
$U^{(\ell)}$ (left orthogonal basis) and $N_b^2 \times R_\ell$ matrix $V^{(\ell)}$ (right basis).
Then for the TEI matrix $B \in \mathbb{R}^{N_b^2 \times N_b^2}$, 
we obtain a factorization \cite{KhVBSchn_TEI:12,VeKhorMP2:13},
\begin{equation}\label{eq:BRF}
B\cong B_{\varepsilon}:= \sum\limits^{R_{\cal N}}_{k=1}\odot^3_{\ell=1} 
V^{(\ell)}  M_k^{(\ell)} {V^{(\ell)}}^T,
\end{equation}
where $V^{(\ell)}$ is the corresponding right redundancy-free basis, $\odot$ denotes the point-wise (Hadamard) 
product of matrices, and
\begin{equation}\label{eq:MRF}
M_k^{(\ell)}= {U^{(\ell)}}^T (P_k^{(\ell)}\ast_n U^{(\ell)})\in \mathbb{R}^{R_\ell \times R_\ell}, 
\quad k=1,...,R_{\cal N}.
\end{equation}
Ultimately, the TEI matrix $B$ is approximated in a form of the truncated Cholesky 
factorization, $B\approx L L^T$, $L\in \mathbb{R}^{N_b^2  \times R_B}$ ($R_B=O(N_b)$), 
such that the required columns of $B$ are easily computed by  using (\ref{eq:BRF}).

Vectorizing matrices $\overline{J}=vec(J(D))$, $\overline{K}=vec(K(D))$,
$\overline{D}=vec(D)$, we arrive at the simple
matrix representations,
\begin{equation}  \label{J_BD}
 \overline{J}= B \overline{D} \approx L (L^T \overline{D}),\quad 
vec(K) = \overline{K}= \widetilde{B} \overline{D},
\end{equation}
where $\widetilde{B}=mat(\widetilde{\textsf{\textbf{B}}})$ is the matrix unfolding of the permuted tensor 
$\widetilde{\textsf{\textbf{B}}}= [\widetilde{b}_{\mu \nu \kappa \lambda}]$ such that
$\widetilde{b}_{\mu \nu \kappa \lambda}= {b}_{\mu  \kappa \nu \lambda}$.

The nonlinear eigenvalue problem (\ref{SCF_EVP}) is solved by the commonly used 
DIIS self-consistent iteration
which requires the update of both Hartree and exchange operators at each iterative step. 

\subsubsection{Numerical illustrations}\label{sssec:HF_Numerics}

Figure \ref{fig:ala_energy}, left, illustrates the convergence history for self-consistent 
iterations by tensor solver II compared with the output of a standard quantum chemical 
package MOLPRO based on analytical calculations \cite{MOLPRO} for the Glycine amino acid,
$C_2 H_5 N O_2$.
The basis set cc-pVDZ of 170 Gaussians is used for both analytical and 3D grid-based calculations.
Here TEI is calculated on the grids $n^3=131072^3$. For this iteration the core Hamiltonian
is taken from MOLPRO. Time for one iteration is about $6$ seconds in Matlab.
\begin{figure}[htbp]
\centering
\includegraphics[width=5.2cm]{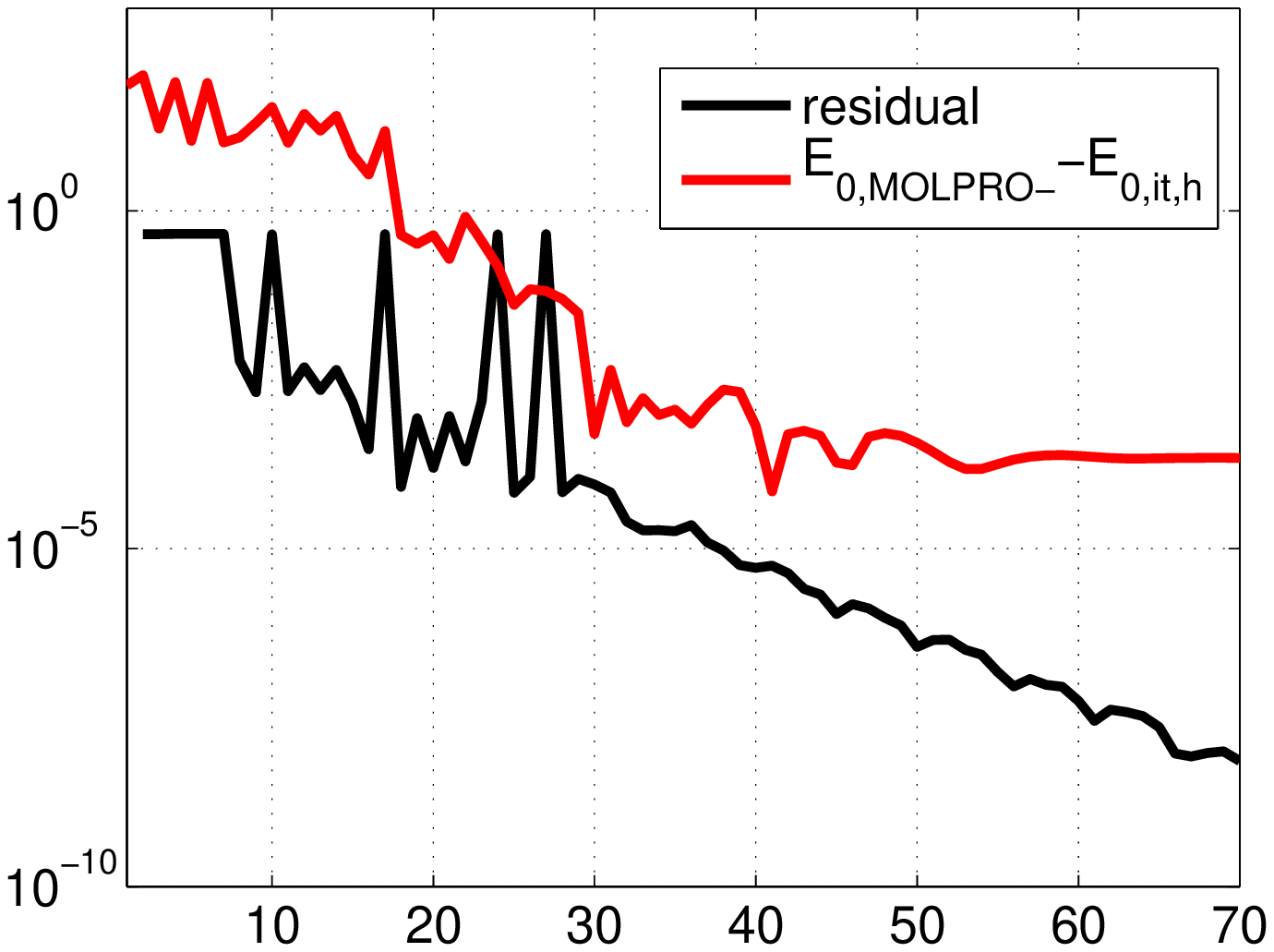} 
\includegraphics[width=5.0cm]{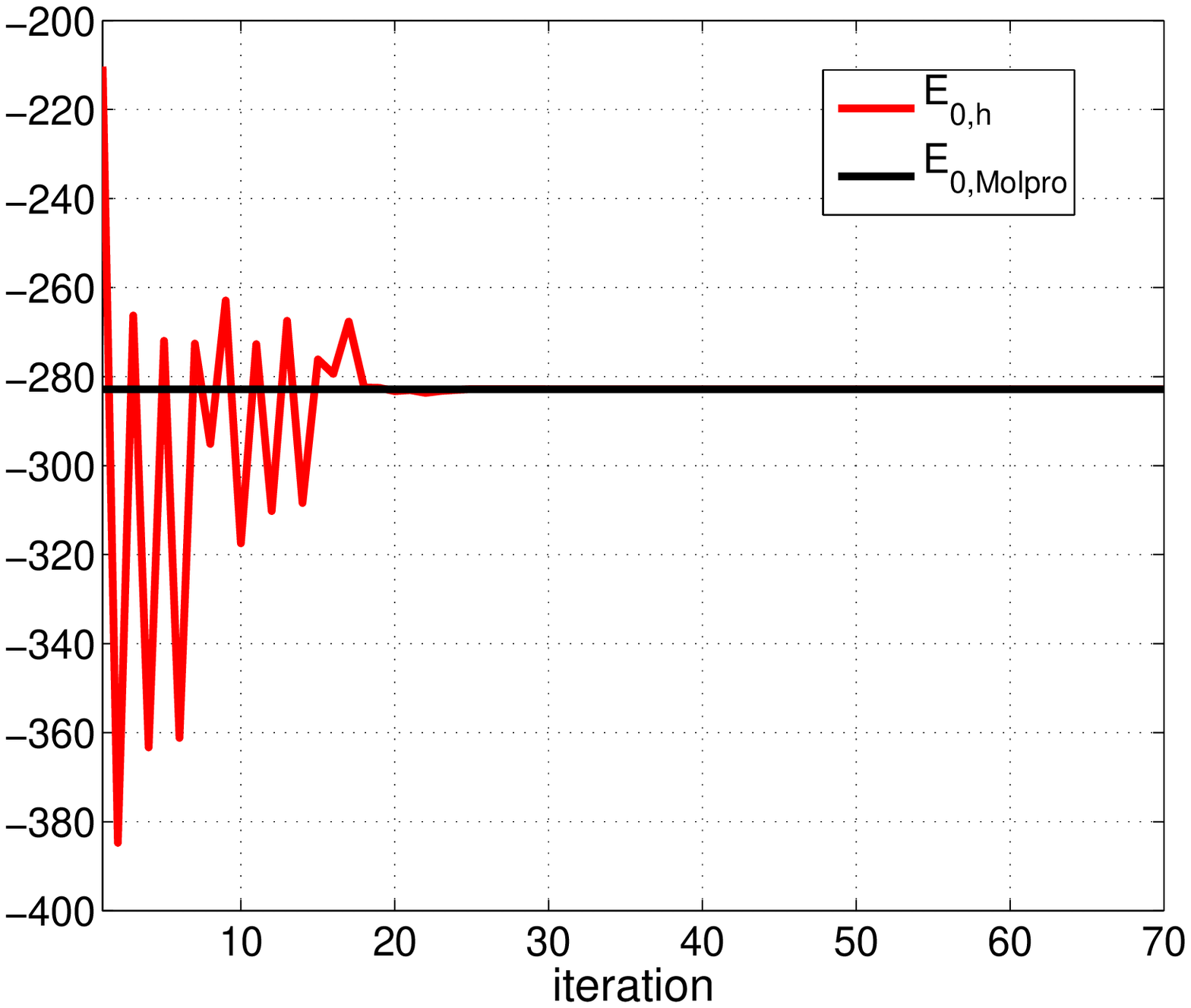}
\includegraphics[width=5.0cm]{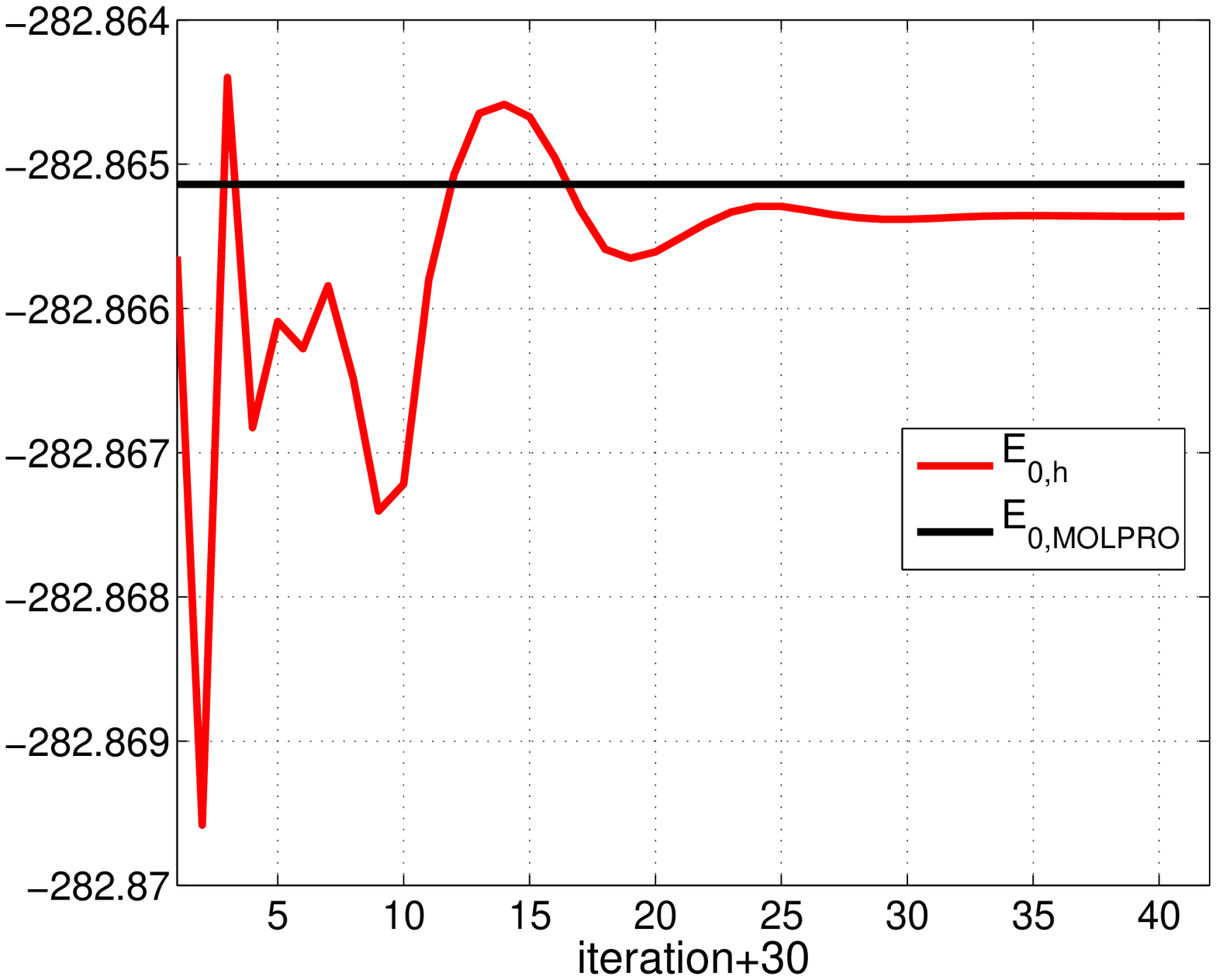}
 \caption{Left: iterations history for Glycine molecule with TEI calculated 
on the grids $n^3=131072^3$. Middle: convergence in energy. Right: a zoom 
for energy difference at last iteration.}
\label{fig:ala_energy}  
\end{figure}

We observe, that though the residual displays good convergence (in max-norm), 
the error with respect to
analytical calculations stagnates at $2\cdot 10^{-4}$ (relative error $< 7\cdot 10^{-7}$).
Figure \ref{fig:ala_energy} middle, shows the convergence in ground state energy $E_{0,it,h}$
while the right figure displays the zoomed difference of ground state energy for 
last iterations. Here the energy $E_{0,it,h}$ is computed by (\ref{eqn:ener_E0}) at each 
iteration step $it$ representing the convergence history.
The stagnation in the energy on lower than MOLPRO level (relative error $7\cdot 10^{-7}$)
may indicate the actual accuracy in computation of 3D convolution integrals in that code, 
and, beside, some possible instabilities of the grid-based algorithms 
applied on huge spatial grids ($n^3=131072^3$).
This topic needs further analysis to be conducted elsewhere. 

\begin{figure}[htbp]
\centering
\includegraphics[width=5.4cm]{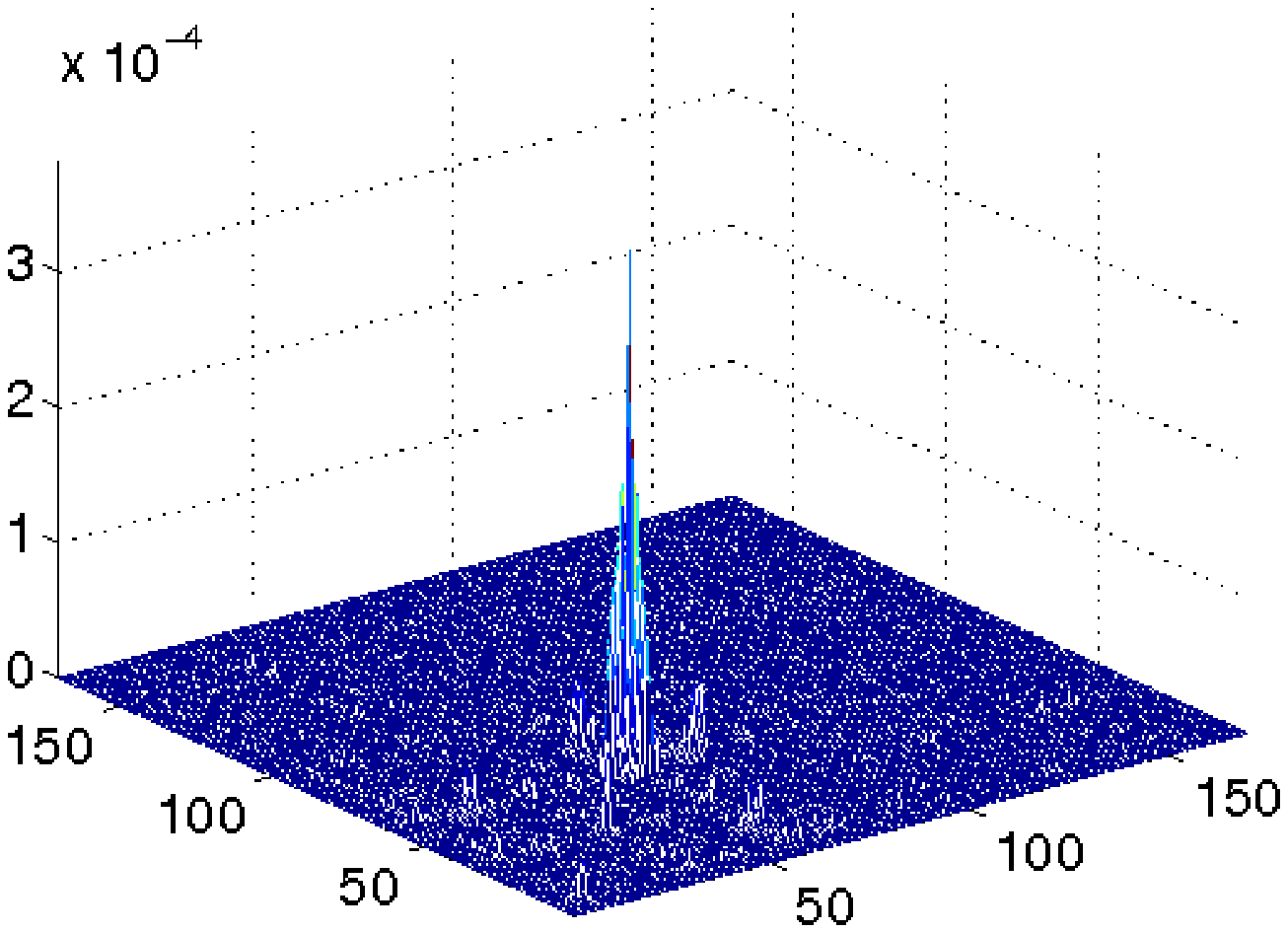} 
\includegraphics[width=5.4cm]{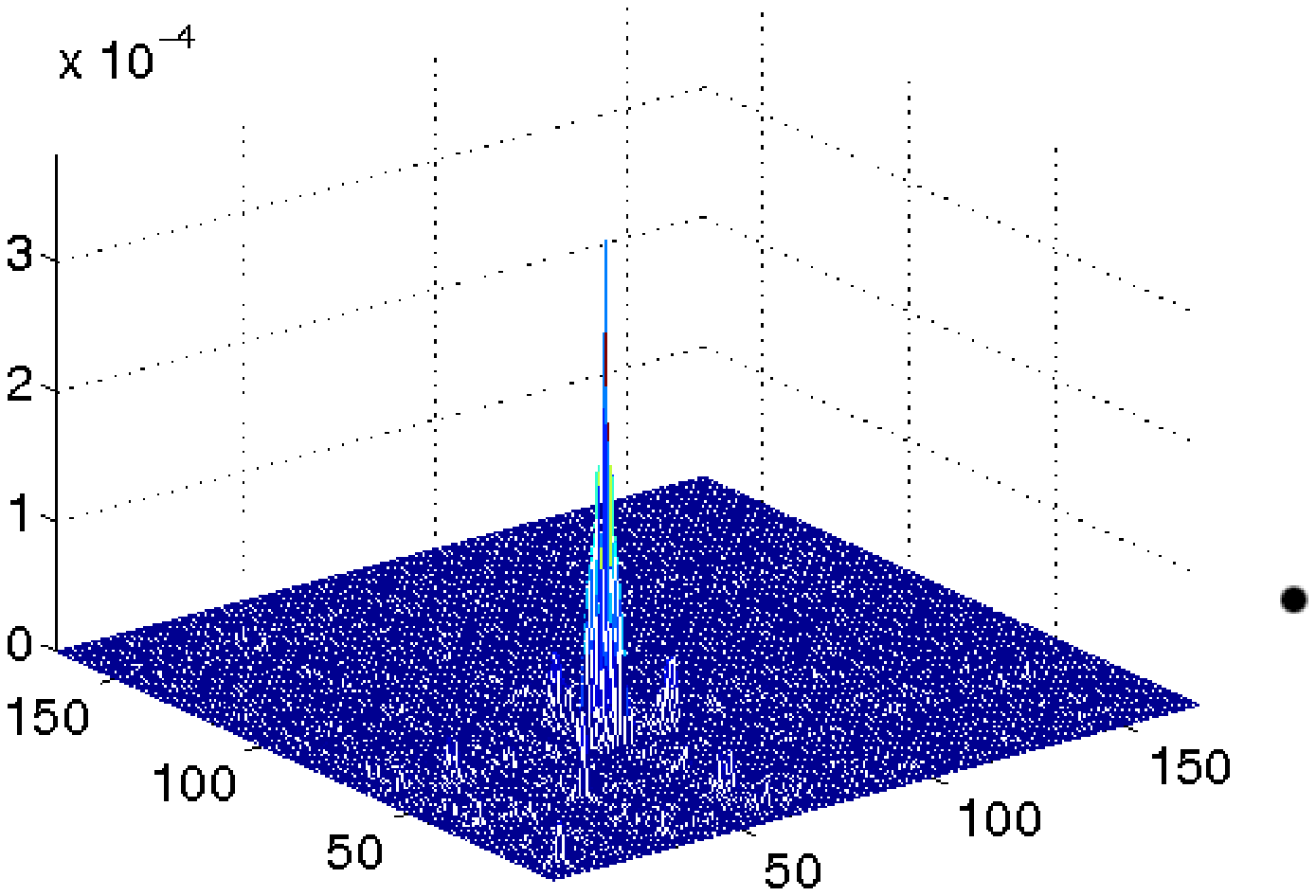}
\includegraphics[width=5.4cm]{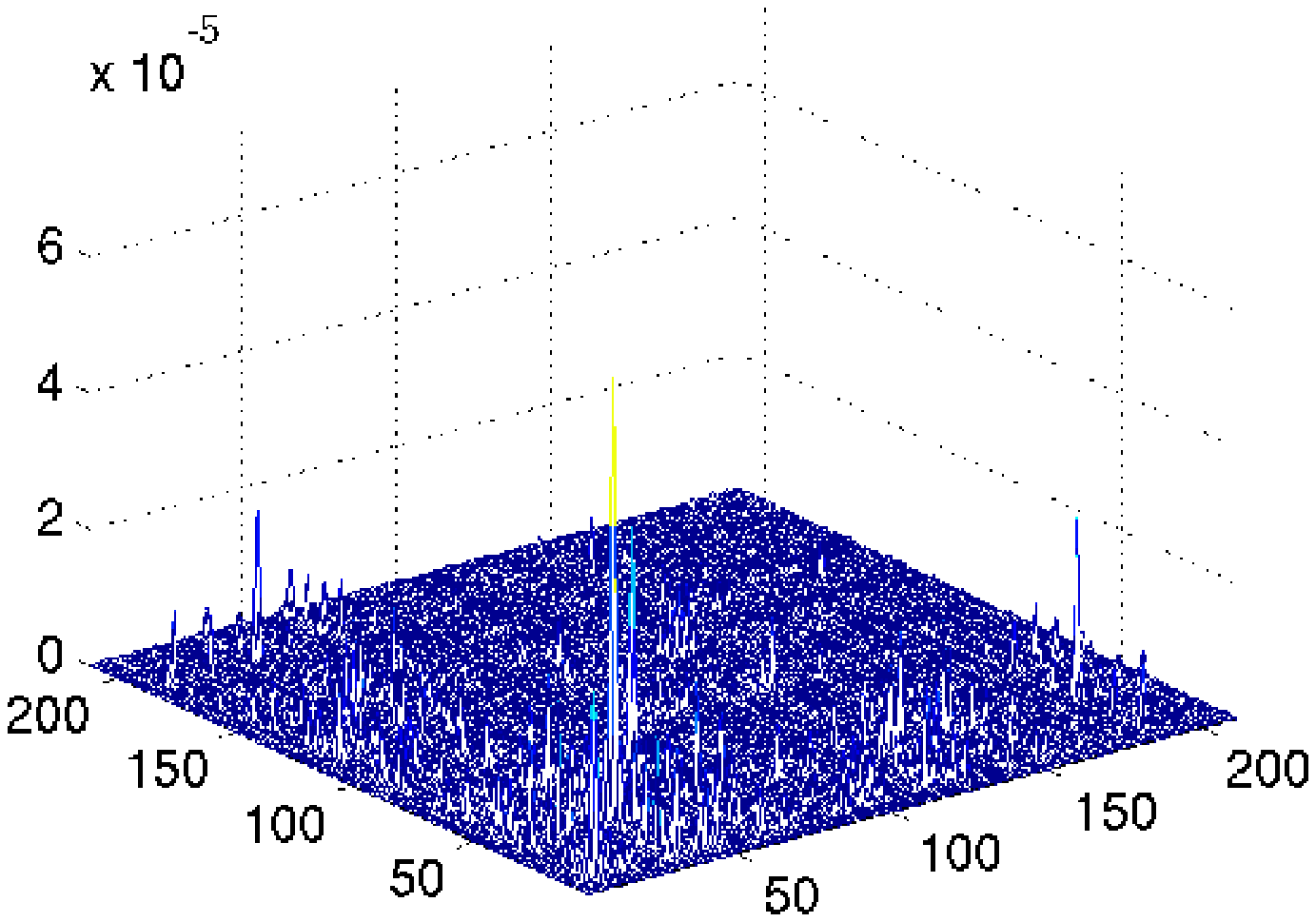}
\caption{Left: the error in density matrix resulting from calculations of TEI on
a grid of size $n^3=65536^3$. Middle: the same error for the grid size $n^3=131072^3$.
 Right: the error in density matrix for Alanine amino acid, TEI computed with $n^3=32768^3$.}
\label{fig:ala}  
\end{figure}

Figures \ref{fig:ala} (left, middle) show  the absolute error  (scaled by factor $10^{-4} - 10^{-5}$),
with respect to MOLPRO output,
of the tensor calculation of the density matrix for Glycine amino acid 
using TEI computed on the grids $n^3=65536^3$ (left) and $n^3=131072^3$ (middle).
Figure \ref{fig:ala} (right) shows the error of tensor calculations for Alanine amino acid,
$C_3 H_7 N O_2$,
using TEI computed on the grid $n^3=32768^3$, with $N_b^2=211^2$ basis functions.

The dominating part in the above tensor calculus resulted by rather large mode size $n$,
can be reduced to the logarithmic scale in $\log n$ by applying the QTT approximation.
Numerical illustrations on the QTT approximation of functions and operators 
arising in the solution of Hartree-Fock equation are given in Table \ref{2-fold-CH4}.
\begin{table}[h]
\begin{center}
\begin{tabular}{|c|c|c|c|c|}
\hline
$n^3$  & $2^{14\cdot 3}$ &  $2^{15\cdot 3}$ & $2^{16\cdot 3}$ &$2^{17\cdot 3}$  \\
\hline
$1/\|x\|$ & 3.15& 3.13 & 3.13 & 3.11 \\
\hline
$g_\mu g_\nu$ & 3.77 & 3.78  &  3.76 & 3.76 \\ 
 \hline
\end{tabular}
\end{center}
\vspace{0.2cm}
\caption{Average QTT-ranks for canonical vectors of the tensor $\textsf{\textbf{P}}$, 
and for the product basis  $\{g_\mu g_\nu\}$ designed for CH$_4$ molecule.}
\label{2-fold-CH4}
\end{table}
It indicates the low QTT-rank approximability of (a) the canonical vectors in low-rank 
decomposition $\textsf{\textbf{P}}$ of the Newton kernel $1/\|x\|$, in $\mathbb{R}^3$, 
see \cite{KhKh3:08},
(b) the product basis set designed for CH$_4$ molecule, both  discretized over large
$n \times n \times n$ spatial grids.
In all cases the  QTT approximation accuracy $\varepsilon=10^{-6}$ is achieved.
 We observe that QTT-ranks of canonical vectors for both the Newton  potential 
 and the product basis remain practically constant in $n$, ensuring 
 $O(\log \varepsilon^{-1}\log n)$ complexity scaling.

These results show that the grid-based tensor-structured Hartree-Fock solver II provides 
the accuracy and computation time compatible with the analytical calculations from MOLPRO.
More numerical results, including the complete grid-based calculations
with the grid-based core Hamiltonian and their comparison with MOLPRO are given 
in \cite{VKH_solver:13}. The Laplacian in core Hamiltonian is calculated on large 3D grids 
by using the QTT format.

We summarize  that tensor numerical methods described above are implemented in the 
Matlab program package Tensor-based Electronic Structure Calculation (TESC) by 
V. Khoromskaia and B. Khoromskij. TESC package allows the efficient grid-based solution of the
3D nonlinear Hartree-Fock equation discretized in a general set of basis functions 
characterized by the existence of low-rank separable representation.
All 3D and 6D integrals involved are approximated on large 
$n\times n \times n$ grids and computed by the 
black-box algorithms in the 1D complexity, $O(n)$, or even in $O(\log n)$ operations, 
which allows us the high resolution with the grid size up to $n=10^6$.

Further work should be focused, in particular, on the developments of the general 
type basis functions.
Now preliminary results are obtained for products of Gaussians with the plane waves.

\subsubsection{Tensor method for fast lattice summation}\label{sssec:EwaldSum}

The recent progress in tensor numerical methods for Hartree-Fock calculations is concerned with
the generalization of the above mentioned approach to the case of large lattice structured and periodic systems
\cite{VeBoKh:Ewald:14,VeKhorCorePeriod:14} arising in the modeling of cristalline, metallic and 
polymer type compounds. 

To fix the idea, we consider the electrostatic potential $V_c(x)$ in
(\ref{eqn:FockOper}) in the simplest case $M=1$.  Defined   
the scaled unit cell $\Omega_0=[-b/2,b/2]^3$,   of size $b\times b \times b$.
We consider a sum of interaction potentials in a symmetric box   
$$
\Omega_L =B\times B \times B,\quad \mbox{with} \quad B= \frac{b}{2}[- L  ,L ], \quad L\in \mathbb{N},
$$ 
consisting of a union of $L \times L \times L$ unit cells $\Omega_{\bf k}$,
obtained from $\Omega_0$ by a shift specified by the lattice vector $b {\bf k}$, where
${\bf k}=(k_1,k_2,k_3)\in \mathbb{Z}^3$, $-(L-1)/2  \leq k_\ell\leq (L-1)/2 $, 
($\ell=1,2,3$).
Here $L=1$ corresponds to a system in the unit cell. 
Recall that  $b=n h$, where $h>0$ is the mesh size that is the same for all spatial variables,
and $n$ is the number of grid points for each variable.
We also define the accompanying domain $\widetilde{\Omega}_L$
obtained by scaling of ${\Omega}_L$ with the factor of $2$,  
 $\widetilde{\Omega}_L= 2 {\Omega}_L$, and introduce the respective rank-$R$ master tensor
$\widetilde{\bf P}=\sum\limits_{q=1}^{R} 
\widetilde{P}^{(1)}_{q}\otimes \widetilde{P}^{(2)}_{q}\otimes \widetilde{P}^{(3)}_{q} $,
approximating $\frac{1}{\|{x} \|}$ in $\widetilde{\Omega}_L$ on tensor grid with mesh size
$h$.

In the case of extended system in a box the summation 
problem for the total potential $V_{c_L}$ is formulated in the domain 
$\Omega_L= \bigcup_{k_1,k_2,k_3=-(L-1)/2}^{(L-1)/2} \Omega_{\bf k}$ as well as in the 
accompanying domain. 
On each $\Omega_{\bf k}\subset \Omega_L$, the  target potential  
$v_{\bf k}(x)=(V_{c_L})_{|\Omega_{\bf k}}$, 
is obtained by summation over all unit cells in $\Omega_L$,
\begin{equation*}\label{eqn:EwaldSumE}
v_{\bf k}(x)=   \sum\limits_{k_1,k_2,k_3=-(L-1)/2}^{(L-1)/2} 
\frac{Z_0}{\|{x} - b {\bf k} \|}, \quad x\in \Omega_{\bf k}. 
\end{equation*}

This calculation is performed at each of $L^3$ elementary cells 
$\Omega_{\bf k}\subset \Omega_L$ simultaneously, which is implemented by the assembled 
tensor summation method described in \cite{VeBoKh:Ewald:14}. The resultant lattice sum is
presented by the canonical rank-$R$ tensor 
${\bf P}_{c_L} \in \mathbb{R}^{nL\times nL\times nL}$, 
\begin{equation}\label{eqn:EwaldTensorGl}
{\bf P}_{c_L}=  Z_0 
\sum\limits_{q=1}^{R}
(\sum\limits_{k_1=-(L-1)/2}^{(L-1)/2}{\cal W}_{{k_1}} \widetilde{P}^{(1)}_{q}) \otimes 
(\sum\limits_{k_2=-(L-1)/2}^{(L-1)/2}{\cal W}_{{k_2}} \widetilde{P}^{(2)}_{q}) \otimes 
(\sum\limits_{k_3=-(L-1)/2}^{(L-1)/2}{\cal W}_{{k_3}} \widetilde{P}^{(3)}_{q}),
\end{equation}
where ${\cal W}_{{k_\ell}}$ is the shift-and-windowing transform along the ${\bf k}$-grid.
The numerical cost and storage size are bounded by $O(R L N_L )$, 
and $O(R N_L)$, respectively (see \cite{VeBoKh:Ewald:14}, Theorem 3.1), where  $N_L= nL$,
and $n$ is the grid size in the unit cell. 

The lattice sum in (\ref{eqn:EwaldTensorGl}) converges only conditionally as $L\to \infty$.
This aspect was addressed in \cite{VeBoKh:Ewald:14}.

\begin{figure}[htbp]
\centering
\includegraphics[width=5.0cm]{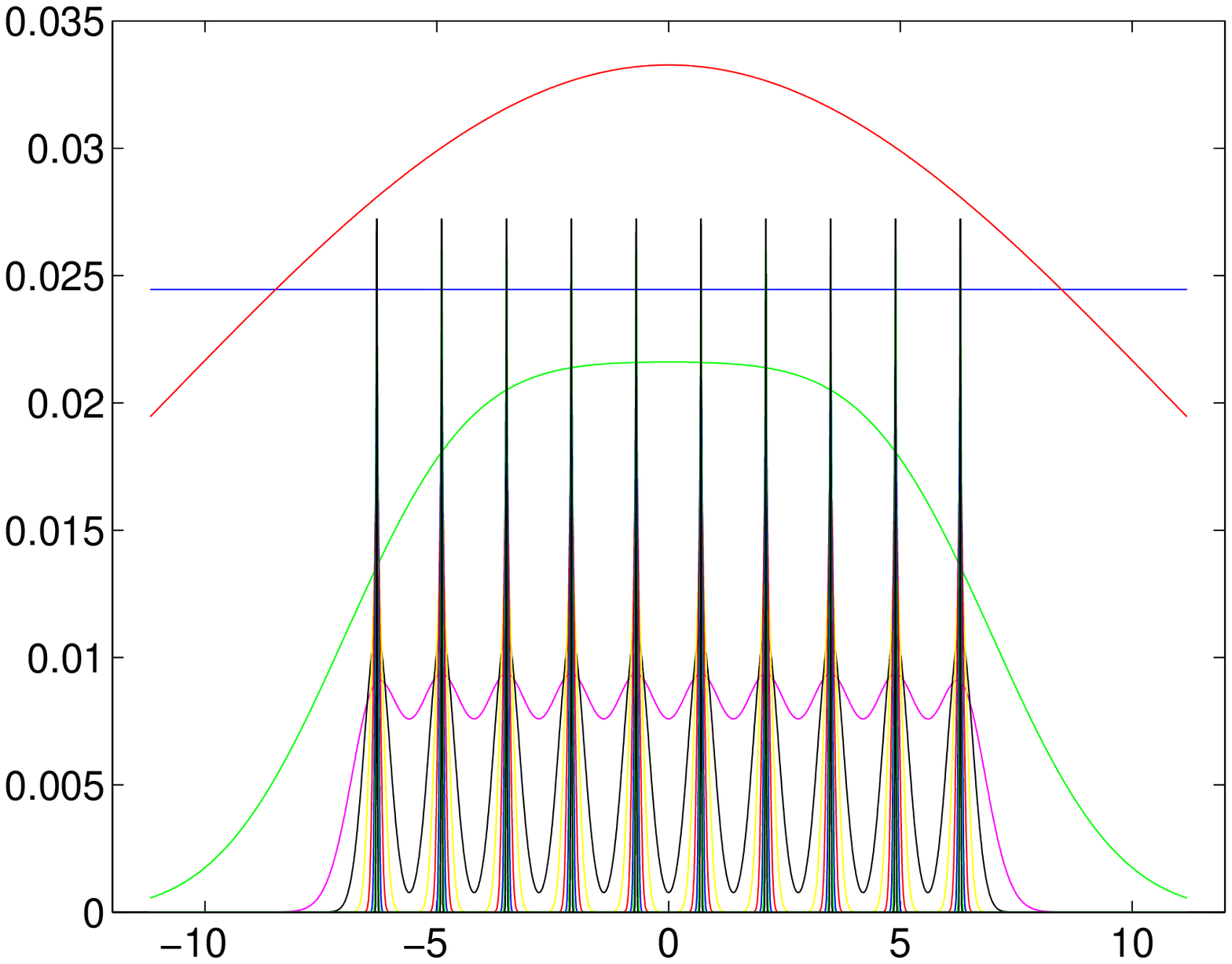}\quad
\includegraphics[width=5.0cm]{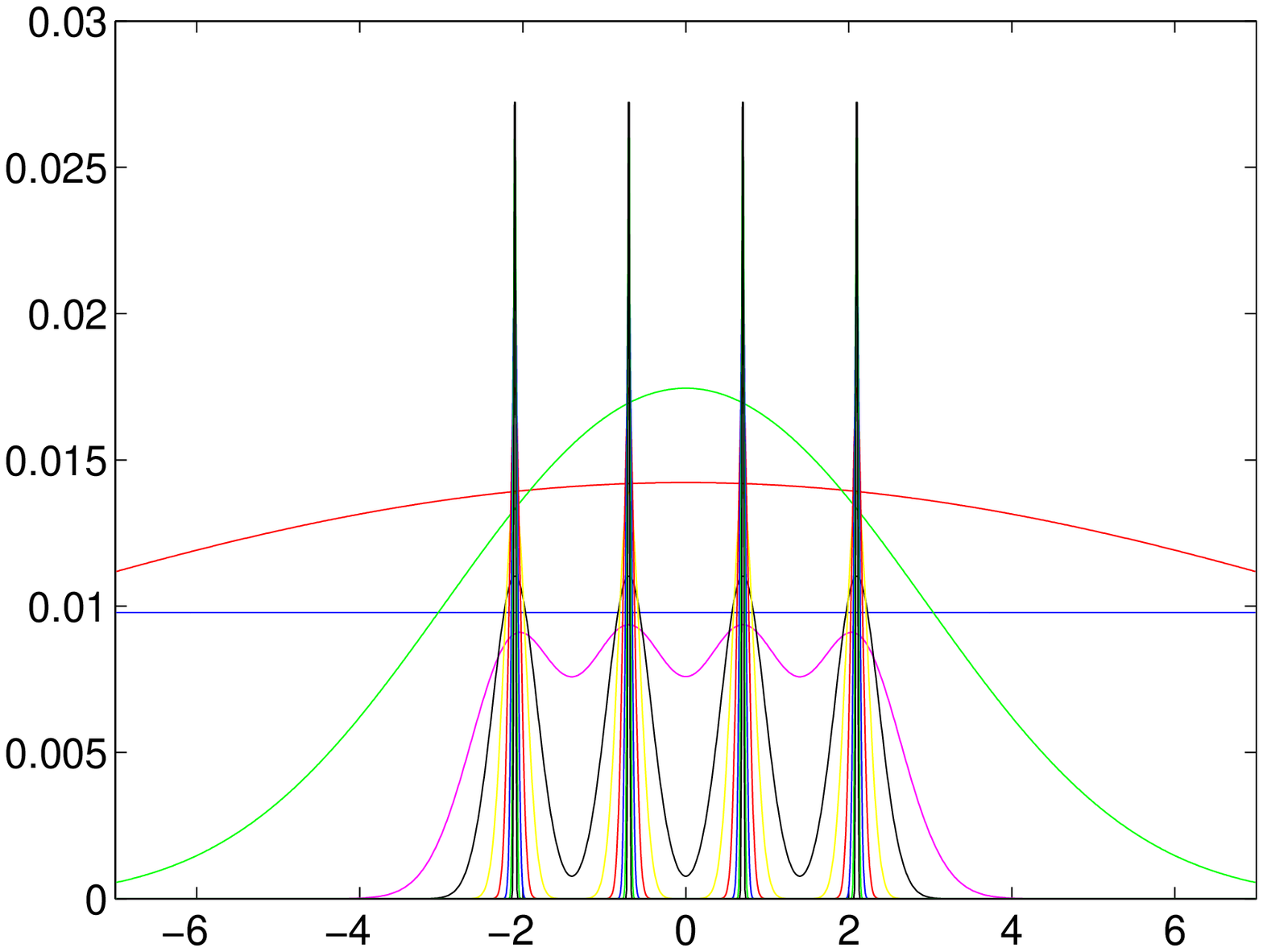}\quad
\includegraphics[width=5.0cm]{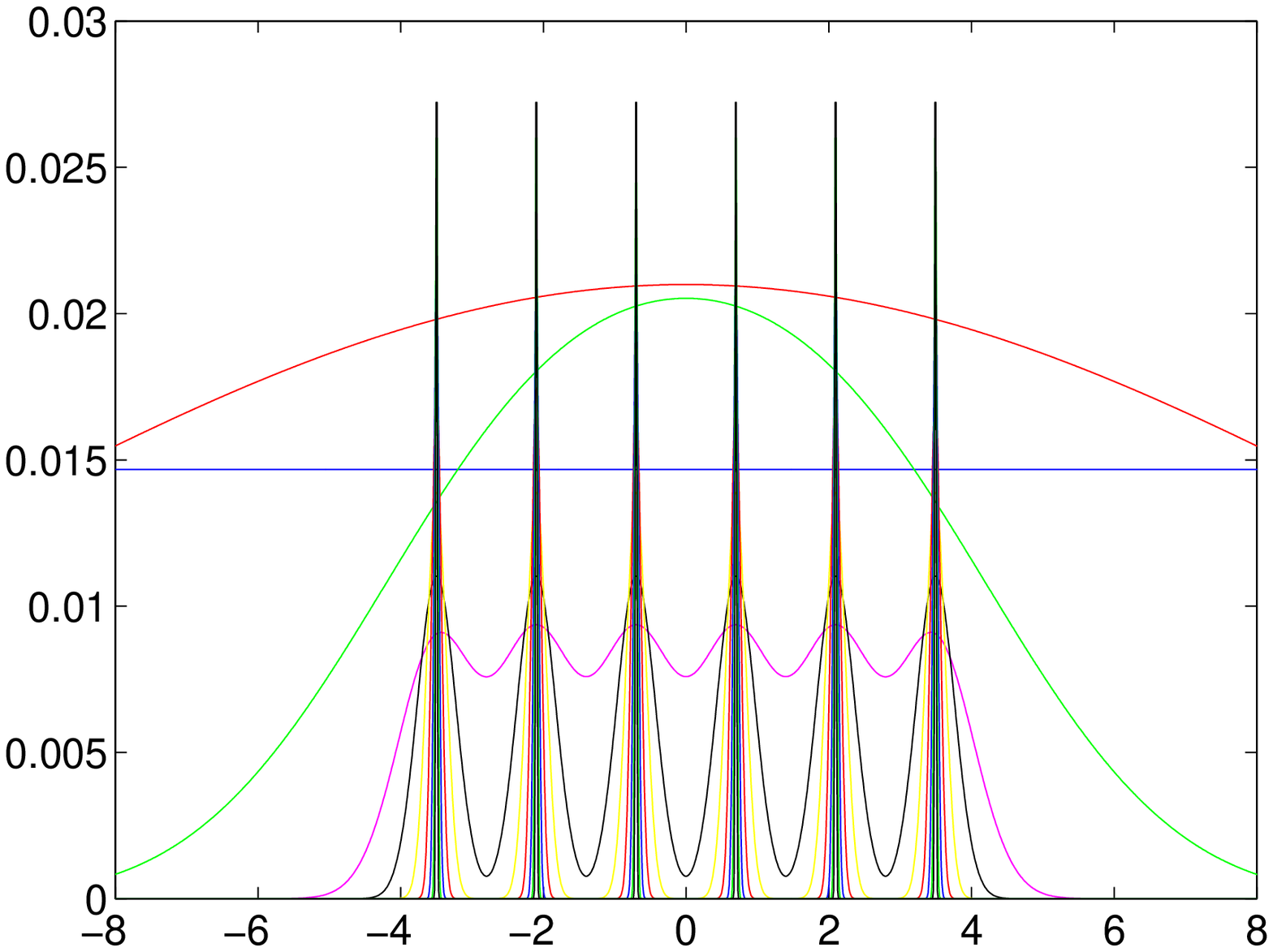}
 \caption{Canonical vectors in the assembled $10\times 4 \times 6$ lattice sum.}
\label{fig:ewald_sum}  
\end{figure}

Figure \ref{fig:ewald_sum} demonstrates the canonical vectors for assembled tensor sum
corresponding to $10\times 4 \times 6$ lattice.

\subsection{Real-time dynamics by parabolic equations}\label{ssec:dynamics}

\subsubsection{General introduction}\label{sssec:IntrodDynamics}

Let $\mathbb{W}$ be a complex Hilbert space and ${\cal H}$ be a self-adjoint
positive definite operator with the domain $D({\cal H})$ and the spectrum
$\Sigma({\cal H}) \in [\lambda_0, \infty), \lambda_0 >0$. 
Given $\sigma\in \{-1,i\}$, we consider the
following initial value problem 
\begin{equation}\label{eqn:1}
  \frac{\partial \psi}{\partial t}=\sigma{\cal H} \psi(t), \quad \psi(0)=\psi_0\in
  D({\cal H})\subset \mathbb{W}.
\end{equation}
The solution of (\ref{eqn:1}) is represented by using operator exponential,
$
\psi(t)= e^{\sigma {\cal H}t} \psi_0,
$
however, in general, the solution operator of this parabolic problem, $S(t)=e^{\sigma{\cal H}t}$, 
does not allow the accurate low-rank tensor approximation.  

In  quantum mechanics, equation like (\ref{eqn:1}) may 
represent the molecular or electronic Schr{\"o}dinger equation in $d$
dimensions that describes how the quantum state of a physical 
system evolves in time. 
In this case the many particle Hamiltonian ${\cal H}$ is given by a
sum of $d-$dimensional Laplacian and certain interaction potential, 
\cite{LubichBook:08}, 
   \begin{equation*}\label{eq_mol1}
      i \frac{\partial \psi}{\partial t} = {\cal H} \psi =
      (-\frac{1}{2}\Delta_d + V) \psi , \quad \psi(x,0) = \psi_0(x),\quad x\in
      \mathbb{R}^d,
    \end{equation*}
where $V:\mathbb{R}^d \to \mathbb{R}$ is (given) approximation to the potential energy surface (PES).

The interesting example of the real-time dynamics 
is given by the Fokker-Planck equation, which is usually high-dimensional.
It models the joint probability density distribution of noisy dynamical system configurations 
(e.g. positions of particles). 
The initial (stochastic) system of ODEs reads
\begin{equation*}
\dfrac{dx}{dt} = -A(x) + G\eta \in \mathbb{R}^d,
\label{rake:eqn:ode}
\end{equation*}
where the noise satisfies $<\eta>=0$, and $<\eta_i \eta_j> = \delta_{ij}$.
The probability to find configurations in some volume $x^*+dx$ is written as follows,
$$
P\left(x \in \mathbb{B}_{|dx|}(x^*)\right) = \psi(x^*) dx,
$$
and the \emph{deterministic} real-time parabolic PDE on the probability 
density, called the Fokker-Planck equation, reads as
\begin{equation*}
\psi(0)=\psi_0: \quad
\dfrac{\partial\psi}{\partial t} = \dfrac{\partial}{\partial x} \cdot \left(A(x) \psi \right) + 
\dfrac{1}{2}\dfrac{\partial}{\partial x} \cdot \left(D \dfrac{\partial\psi}{\partial x} \right),
\quad \mbox{where}\quad D = GG^T.
\label{rake:eqn:FPE}
\end{equation*}
Here $\psi:\mathbb{R}^d \rightarrow \mathbb{R} $, and 
${\bf v}: \mathbb{R}^d \rightarrow \mathbb{R}^d$ 
is a given velocity field. In many cases the computation of stationary solution 
$\psi(t)\to \psi_\ast: A \psi_\ast =0$, is the main target. 
We refer to \cite{DoKhOsel:11} for detailed discussion and numerical tests.

In Section  \ref{sssec:CME_QTTTuck} we discuss in more detail the tensor numerical scheme
for the solution of chemical master equation.
This model describes the dynamics of joint probability density ${\cal P}({\bf x},t)$, 
\begin{equation}\label{CME:eq1}
{\cal P}({\bf x},0) = {\cal P}_0,\quad 
\frac{d {\cal P}({\bf x},t)}{d t} = {\bf A} {\cal P}({\bf x},t),  
\quad  {\bf x}\in \mathbb{R}^{n_1\times ... \times n_d},
\end{equation}
where ${\cal P}({\bf x},t)$ is the joint probability of the numbers of molecules of species 
$S_1,...,S_d$, reacting in $M$ channels, to take particular values $x_1,...,x_d$ at time $t$.

\subsubsection{Rank bounds for Cayley transform in space-time tensor approximation}\label{sssec:CayleyDyn}

In \cite{GavlKh_Caley:11} the QTT-Cayley transform was proposed to compute
dynamics and spectrum of high-dimensional Hamiltonians with the focus 
on complex-time dynamics. Here, using the similar techniques, we analyze in more
detail the case of real-time dynamics, i.e. $\sigma=-1$ in (\ref{eqn:1}),
already sketched in \cite{GavlKh_Caley:11}. 
For the ease of presentation, we further assume that the self-adjoint Hamiltonian operator ${\cal H}$ 
has the complete eigenbasis, $\mathbb{W}=\operatorname{span}\{\phi_n\}_{n=0}^\infty$, with 
the real eigenvalues $ 0< \lambda_0\leq \lambda_1\leq ... $. 
An extension to the more general class of convection-diffusion operators as in (\ref{rake:eqn:FPE}) 
is possible and it will be addressed elsewhere.

The idea on separation of the time and space variables via Cayley transform 
is based on the series expansion for the solution operator  
\begin{equation}\label{eqn:2}
  e^{-{\cal H}t}=({\cal H}+ I)^{-1}
  \underset{p=0}{\overset{\infty}{\sum}}L_p (t){\cal C}^{p}({\cal H}),
\end{equation}
where
$
{\cal C}={\cal C}({\cal H})={\cal H}({\cal H}+ I)^{-1}
$ 
is the  Cayley transform of the operator ${\cal H}$, 
and $L_p(t) = L_p^{(0)}(t)$ is the Laguerre polynomial of degree $p$ \cite{szegoe,bateman}.
Expansion (\ref{eqn:2}) applies (convergence in $\mathbb{W}$ norm) 
to every initial vector $\psi_0\in D({\cal H})$, i.e.,
\begin{equation}\label{eq init x0}
\psi_0=\sum\limits_{k=0}^\infty a_k \phi_k, \quad \mbox{such that}\quad  
\sum\limits_{k=0}^\infty |a_k|^2 \lambda_k^2 < \infty.
\end{equation}


Separation of the time variable $t$ from the spatial part of the solution
is based on the observation that the solution of our initial value problem subject
(\ref{eq init x0}) can be represented as
\begin{equation}\label{3}
\psi(t)=\underset{p=0}{\overset{\infty}{\sum}}L_p (t)u_{p} 
\equiv ({\cal H}+ I)^{-1}
\underset{p=0}{\overset{\infty}{\sum}}L_p (t){\cal C}^{p} \psi_0,
\end{equation}
where the elements $u_p$ can be found from the recursion
\begin{equation*}\label{5}
u_0=({\cal H}+ I)^{-1}\psi_0: \quad u_{p+1}={\cal H}({\cal H}+ I)^{-1}u_p, \quad p=0,1,...
\end{equation*}
Now, as a computable approximation to the exact solution, we
consider the $m$-term truncated series representation
\begin{equation}\label{eqn:15}
\begin{split}
  \psi_m(t)&= ({\cal H}+ I)^{-1}
\underset{p=0}{\overset{m}{\sum}}L_p (t) {\cal C}^{p} \psi_0=
\psi_0 + {\cal H}\underset{p=0}{\overset{m}{\sum}}
  (L_{p+1} (t)-L_{p} (t))u_{p},
\end{split}
\end{equation}
that effectively separates space and time variables.

Let us show that approximation (\ref{eqn:15}) leads to an exponential convergence
rate in $m$ for the ${\cal H}$-analytical input data.
\begin{definition}\label{def B-anal}
A vector $f= \sum\limits_{k=0}^\infty a_k \phi_k \in D({\cal H})$ is called analytical for 
${\cal H}$ (${\cal H}$-analytic)
if there is a constant $C=C(f) >0$, such that 
  $$
 \|{\cal H}^n f\|= \sqrt{\sum\limits_{k=0}^\infty |a_k|^2 \lambda_k^{2n}}
  \leq C^n n! \quad \mbox{for all} \quad n=1,2,3,...
  $$
\end{definition}
Next theorem proves the exponential convergence of the $m$-term approximation (\ref{eqn:15}),
see  \cite{GavlKh_Caley:11}, Remark 2.9.
\begin{theorem}\label{thm:exp_conv_TGT}
Let $\psi_0$ be ${\cal H}$-analytic and let $r>0$ be 
the convergence radius of the series 
$\underset{k=0}{\overset{\infty}{\sum}}\frac{s^k}{k!}\|{\cal H}^k \psi_0\|$.
Then for every fixed $0< s<r$, and  $T> 0$, there exist $c,c_1 >0$ independent of $m$, 
such that for all $m\in \mathbb{N}$,
\begin{equation}\label{18}
\begin{split}
  &\|\psi(t)-\psi_m(t)\|\le 
c t^{\frac{1}{4}}e^{\frac{t}{2}}m^{-1/4}
  e^{-c_1\sqrt{m}}\|\psi_0\|_{s,{\cal H}},
  \quad t\in[0,T],
\end{split}
\end{equation}
where $\|\psi_0\|_{s,{\cal H}}:=\underset{k=0}{\overset{\infty}{\sum}}\frac{s^k}{k!}\|{\cal H}^k \psi_0\|$.
\end{theorem}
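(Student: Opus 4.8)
The plan is to identify the error $\psi(t)-\psi_m(t)$ with the tail of the Laguerre--Cayley series and to estimate that tail spectrally. From (\ref{3}) and (\ref{eqn:15}), since $\psi(t)$ is the full series and $\psi_m(t)$ its $m$-term partial sum,
\[
\psi(t)-\psi_m(t)=(\mathcal{H}+I)^{-1}\sum_{p=m+1}^{\infty}L_p(t)\,\mathcal{C}^{p}\psi_0,\qquad \mathcal{C}=\mathcal{H}(\mathcal{H}+I)^{-1}.
\]
Expanding $\psi_0=\sum_k a_k\phi_k$ in the eigenbasis of $\mathcal{H}$ and putting $z_k:=\lambda_k/(\lambda_k+1)\in[\lambda_0/(\lambda_0+1),1)$, I would reduce everything to the scalar Laguerre tail $R_m(t;z):=\sum_{p>m}L_p(t)z^p$, since
\[
\|\psi(t)-\psi_m(t)\|^{2}=\sum_k |a_k|^{2}\,\frac{|R_m(t;z_k)|^{2}}{(\lambda_k+1)^{2}}.
\]

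To estimate $R_m(t;z)$ I would use a Cauchy contour integral built from the Laguerre generating function $\sum_{p\ge0}L_p(t)w^p=(1-w)^{-1}e^{-tw/(1-w)}$: for $z<\rho<1$,
\[
R_m(t;z)=\frac{z^{m+1}}{2\pi i}\oint_{|w|=\rho}\frac{e^{-tw/(1-w)}}{(1-w)\,w^{m+1}(w-z)}\,dw.
\]
On $|w|=\rho$ one has $\mathrm{Re}\,\frac{w}{1-w}\ge-\frac{\rho}{1+\rho}$, the minimum being attained at $w=-\rho$, so that $|e^{-tw/(1-w)}|\le e^{t\rho/(1+\rho)}\le e^{t/2}$ for $t\ge0$. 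Choosing the near-optimal radius $\rho=\frac12(1+z)$ --- whence $1-\rho=\rho-z=\frac{1}{2(\lambda+1)}$ and $z^{m+1}/\rho^{m}\le e^{-m/(2(\lambda+1))}$ --- would give the key bound
\[
\frac{|R_m(t;z_k)|}{\lambda_k+1}\ \le\ 4\,(\lambda_k+1)\,e^{t/2}\,e^{-m/(2(\lambda_k+1))}.
\]

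Next I would convert the spectral exponential into an arbitrary negative power by $e^{-y}\le n!\,y^{-n}$: for every $l\in\mathbb{N}$, $(\lambda_k+1)^{2}e^{-m/(\lambda_k+1)}\le(2l)!\,(\lambda_k+1)^{2l+2}/m^{2l}$, and summing over $k$ this yields
\[
\|\psi(t)-\psi_m(t)\|\ \le\ C\,e^{t/2}\,\frac{\sqrt{(2l)!}}{m^{l}}\,\bigl\|(\mathcal{H}+I)^{l+1}\psi_0\bigr\|.
\]
Only at this point does the $\mathcal{H}$-analyticity enter: by the binomial theorem and $\|\mathcal{H}^{j}\psi_0\|\le\frac{j!}{s^{j}}\|\psi_0\|_{s,\mathcal{H}}$ (valid for $0<s<r$) one gets $\|(\mathcal{H}+I)^{l+1}\psi_0\|\le\frac{(l+1)!\,e^{s}}{s^{l+1}}\|\psi_0\|_{s,\mathcal{H}}$. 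Substituting, using Stirling's formula, and optimizing the free index as $l\sim\sqrt{sm}$ would collapse the right-hand side to $C(s)\,e^{t/2}\,m^{\beta}\,e^{-c_1\sqrt{m}}\,\|\psi_0\|_{s,\mathcal{H}}$ with $c_1\sim\sqrt{s}$ and a polynomial factor $m^{\beta}$ --- which is already the essential content of (\ref{18}).

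I expect two places to be delicate. The first, and the main obstacle, is the contour estimate for the high modes $z_k\to1$ (that is, $\lambda_k\to\infty$): there the contour must be pushed toward the unit circle and the factors $1-\rho$ and $\rho-z$ degenerate like $1/(\lambda_k+1)$; it is exactly this degeneracy, balanced against the moment order $l$, that forces $l\sim\sqrt{m}$ and hence the $\sqrt{m}$ in the exponent, and it is where the uniformity of the constants in the spectral parameter must be verified. The second is replacing the crude polynomial factor $m^{\beta}$ by the sharp prefactor $t^{1/4}m^{-1/4}$ of (\ref{18}): this requires evaluating the above contour integral by a saddle-point argument, equivalently using the refined Laguerre asymptotics $|L_p(t)e^{-t/2}|\lesssim(pt)^{-1/4}$ in the oscillatory regime and resumming over $p$, as carried out in \cite{GavlKh_Caley:11}, Remark~2.9.
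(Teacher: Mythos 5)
Your proposal is correct in its essential mechanism but follows a genuinely different technical route from the paper. The paper keeps the sum over $p$ and estimates termwise: it invokes the classical Laguerre asymptotics for $|L_p(t)|$ (producing the $(pt)^{-1/4}$-type prefactor), and for $\|u_p\|$ it inserts the factor $e^{-\lambda_k s}e^{\lambda_k s}$ into the eigenexpansion so that $u_{p+1}=\sum_n\frac{s^n}{n!}\,{\cal H}^n\bigl(\sum_k a_k\Phi_s(\lambda_k)\phi_k\bigr)$ with $\Phi_s(\lambda)=e^{-\lambda s}\bigl(\lambda/(\lambda+1)\bigr)^p$; elementary calculus gives $\max_\lambda|\Phi_s(\lambda)|\le c\,e^{-c_1\sqrt{p}}$ (maximum at $\lambda_*\asymp\sqrt{p/s}$), whence $\|u_{p+1}\|\le c\,e^{-c_1\sqrt{p}}\|\psi_0\|_{s,{\cal H}}$ and the tail sum finishes the proof. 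You instead sum over $p$ first, via the generating function and a Cauchy contour, and only then dispose of the spectral parameter by a moment/optimization argument. Both proofs hinge on exactly the same balance --- the geometric rate $1/(\lambda+1)$ of $(\lambda/(\lambda+1))^p$ traded against the spectral decay encoded in $\|\psi_0\|_{s,{\cal H}}$, worst modes at $\lambda\asymp\sqrt{m/s}$ --- so the exponent $e^{-c_1\sqrt{m}}$ with $c_1\sim\sqrt{s}$ emerges identically. The paper's insertion trick is slicker (one line, no Stirling, no free index to optimize); your contour argument is more self-contained (no Laguerre asymptotics are needed for the exponential rate) and makes the uniformity in the spectral parameter, which you rightly single out as the delicate point, fully explicit. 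Your factor-of-two slip between $e^{-m/(2(\lambda_k+1))}$ and $e^{-m/(\lambda_k+1)}$ only rescales $c_1$ and is harmless.

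On the prefactor $t^{1/4}m^{-1/4}$ that your main argument does not produce: the $m^{-1/4}$ can be absorbed by slightly decreasing $c_1$, so nothing is lost there. The $t^{1/4}$, however, cannot be correct as stated on all of $[0,T]$: since $L_p(0)=1$, one has $\psi(0)-\psi_m(0)={\cal C}^{m+1}\psi_0\neq 0$ in general, while the right-hand side of (\ref{18}) vanishes at $t=0$; consistently, the paper's own intermediate estimate carries $t^{-1/4}$ and is restricted to $t\in[\varepsilon,T]$. So your decision to flag the sharp prefactor as requiring the refined saddle-point/Laguerre analysis, rather than claim it, is if anything more careful than the source.
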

\begin{proof}
  First, we note that the asymptotic properties of Laguerre polynomials 
yield 
\begin{equation}\label{19-1}
\begin{split}
  &\|\psi(t)-\psi_m (t)\|\le c t^{-\frac{1}{4}}e^{\frac{t}{2}}
  \underset{p=m+1}{\overset{\infty}{\sum}}p^{-3/4}\|u_p\|,\quad \mbox{for}\quad t \in [\varepsilon, T],
\end{split}
\end{equation}
where the iterand 
$u_{p+1}= \sum\limits_{k=0}^\infty a_k \bigg(\frac{\lambda_k}{\lambda_k+1}\bigg)^{p} \phi_k$ 
admits the representation
\begin{equation}\label{eq B-anal err}
 \begin{split} 
  u_{p+1}
  &= \sum\limits_{k=0}^\infty a_k e^{-\lambda_k s}
 \bigg(\frac{\lambda_k}{\lambda_k+1}\bigg)^{p} 
\bigg( \sum\limits_{n=0}^\infty  \frac{\lambda_k^n s^n}{n!} \bigg) \phi_k,\\
&= \sum\limits_{k=0}^\infty a_k e^{-\lambda_k s}
 \bigg(\frac{\lambda_k}{\lambda_k+1}\bigg)^{p} 
\sum\limits_{n=0}^\infty \frac{s^n}{n!} {\cal H}^n \phi_k,\\
&= \sum\limits_{n=0}^\infty \frac{s^n}{n!} {\cal H}^n 
\bigg( \sum\limits_{k=0}^\infty a_k \Phi_s(\lambda_k) \phi_k\bigg),
\end{split}
\end{equation} 
with
$
\Phi_s(\lambda):= e^{-\lambda s} \left(\frac{\lambda}{\lambda+1}\right)^{p}. 
$
The simple variational analysis indicates that the function  
$\Phi_{s}(\lambda)$ takes its maximum at a point $\lambda_* \asymp \sqrt{p}$.
Hence, taking into account that 
\[
\|{\cal H}^n \bigg( \sum\limits_{k=0}^\infty a_k \Phi_s(\lambda_k) \phi_k\bigg) \|
\leq   \max\limits_{\lambda\in [\lambda_0,\infty)}
 \bigg| \Phi_s(\lambda) \bigg| \|{\cal H}^n  \psi_0\|,
\]
we arrive at the estimate 
$\|u_{p+1}\|\le c e^{-c_1 \sqrt{p}}\|\psi_0\|_{s,{\cal H}}$, implying 
\begin{equation*}\label{9-2-1}
\begin{split}
  \|\psi(t)-\psi_m(t)\|  & \le c
  t^{\frac{1}{4}}e^{\frac{t}{2}}\|\psi_0\|_{s,{\cal H}}
  \underset{p=m+1}{\overset{\infty}{\sum}}p^{-1/4}p^{-1/2}e^{-c_1\sqrt{p}} \\
  &\le c t^{\frac{1}{4}}e^{\frac{t}{2}}m^{-1/4}
  e^{-c_1\sqrt{m}}\|\psi_0\|_{s,{\cal H}}, 
\end{split}
\end{equation*}
which completes our proof.
\end{proof}
Notice that the constant $c_1 \approx s^{1/2}$ depends on $s$, while $c$ does not 
(see \cite{GavlKh_Caley:11} for the discussion in the complex case $\sigma=i$).

In the following discussion we consider the semi-discrete scheme 
(i.e., already discretized in space), such that the operators ${\cal H}$ and ${\cal C}$
are substituted by a matrix ${\bf H}$ and ${\bf C}$, respectively. 
Assume that $\psi_m(t)\in \mathbb{W}_{\bf n}$ 
represents a $d$th order tensor obtained by the truncated series 
representation (\ref{eqn:15}) composed of the discretized solutions $u_p$,
$p=0,1,...,m$, 
and let $t_0,...,t_{N_t}\in [0,T]$ be the uniform discretization grid in time
with a step size $\tau$.

Given the rank-truncation threshold $\varepsilon > 0$, then
similar to Lemma 3.4 in \cite{GavlKh_Caley:11}, we derive that
the choice $m=O(\log^2 \frac{1}{\varepsilon})$ implies that
the  $\varepsilon$ QTT-rank of a concatenated tensor 
$$
\textsf{\textbf{P}}_m=
  [\psi_m(t_0),...,\psi_m(t_{N_t})]_{k=0}^{N_t}\in
\mathbb{W}_{\bf n}\times \mathbb{R}^{N_t+1}, \quad t_k=k \tau,
$$
obtained by sampling of $\psi_m(t)$ on the time grid, is bounded by 
$$
rank_{QTT}(\textsf{\textbf{P}}_m) \leq \sum\limits_{p=0}^{m} 
(p+1)rank_{QTT} ({{\bf{C}}}^p \psi_0)
\leq C m^2 rank_{QTT}({{\bf{C}}}^m \psi_0). 
$$ 

Suppose that $rank_{QTT}({\bf{C}}^m \psi_0)$ is small, then
the block two-diagonal system of equations defined by the implicit Euler scheme,
\begin{equation}\label{x-t system}
\psi_0=\psi(0), \quad ({\bf{I}}+ \tau  {{\bf{H}}})\psi_{k+1}- \psi_k= 0, 
\quad k=0,1,...,N_t-1,
\end{equation}
where $\psi_k\in\mathbb{W}_{\bf n} $ will approximate the value 
of the true solution $\psi_m(t_k)$,
can be assumed to have a low QTT-rank solution represented by tensor $\textsf{\textbf{P}}$,
with $O(d \log N \log N_t)$ complexity scaling.
In turn, (\ref{x-t system}) can be solved in the QTT format as the global system of equations with
respect to the unknown space-time vector (tensor)
$$
\textsf{\textbf{P}}=[\psi_0,\psi_1,...,\psi_{N_t}]
\in \mathbb{W}_{\bf n}\times \mathbb{R}^{N_t+1}\approx
\textsf{\textbf{P}}_m.
$$
The solution of the global $(x,t)$ system (\ref{x-t system}) living in large
virtual dimension $D = d \log N \log N_t$ can be approached by
either tensor-truncated preconditioned or AMEn/DMRG type iteration 
with the asymptotic cost $O(d \log N \log N_t)$
(see \cite{DoKhOsel:11,DoKh_CME_NLAA:13,DoSav_AMEn:13} for more detail).

\subsubsection{Chemical master equation in the QTT-Tucker format}\label{sssec:CME_QTTTuck}

In this section, we discuss the tensor numerical scheme via the QTT-Tucker format
\cite{DoKh_CME_NLAA:13} for the solution of chemical master equation (CME).

Suppose that $d$ species $S_1,...,S_d$ react in $M$ reaction channels.
Denote the vector of their concentration $\mathbf{x}=(x_1,...,x_d)$, $x_i\in \{0, ..., N_i-1\}$.
Each channel is specified by a stoichiometric vector 
$\mathbf{z}^{m}\in \mathbb{Z}^d$, where $\mathbf{z}=(z_1,...,z_d)$,
and a propensity function $w^m(\mathbf{x}), m=1,...,M$.
Introduce the shift matrices
\begin{equation*}
J^{z} = \begin{bmatrix}
         0 & \cdots & 1 \\
           & \ddots & & \ddots \\
           & & \ddots & & 1 \\
           & & & \ddots & \vdots \\
         & &  &  & 0
        \end{bmatrix} \begin{matrix} \phantom{0} \\ \phantom{\ddots} \\ \leftarrow & \mbox{row} & N-z \\ \phantom{\ddots} \\ \leftarrow & \mbox{row} & N\phantom{-z,} \end{matrix},
        \quad \mbox{if}~z \ge 0;\qquad  J^{z} = (J^{-z})^\top, \mbox{if }  z<0.
\label{cme:eqn:shift_matrices}
\end{equation*}
Now  the finite state approximation (FSP) of \eqref{CME:eq1} can be written as a linear ODE,
the so-called CME, that  is a deterministic difference equation on the joint 
probability density $P(x,t)$:
\begin{equation*}
\dfrac{dP(t)}{dt} = A P(t), \quad 
A= \sum\limits_{m=1}^M (\mathbf{J}^{\mathbf{z^m}} - \mathbf{J}^0) \diag(w^m) P(t), 
\qquad P(t) \in \mathbb{R}_+^{{\prod_{i=1}^d N_i}},
\label{cme:eqn:cme-shift}
\end{equation*}
\begin{equation*}
 \mathbf{J}^{\mathbf{z}} = J^{z_1} \otimes \cdots \otimes J^{z_d},
\end{equation*}
$w^m = \{w^m(\mathbf{x})\}$ and $P(t) = \{P(\mathbf{x},t)\}$, 
$\mathbf{x} \in \bigotimes\limits_{i=1}^d \{0,\ldots,N_i-1\}$,
are the corresponding values of $w^m$ and $P$ stacked into vectors,
$\diag(w^m)$ is a diagonal matrix with the values of $w^m$ stretched along the diagonal,
and $\otimes$ means the rank-$1$ matrix format.

For discretization of the CME equation in time, we use the Crank-Nicolson scheme
with the step size $\tau$ and denote $t_k=\tau k $, $k=0,1,...,N_t$, $y_k=P_k \approx P(t_k)$,
and $f_k=f(t_k)$ in the case of nonzero right-hand side. We simplify the notations by setting
$N_i=N_x$ for $i=1,...,d$. Given $A, f_k, y_0$ in the TT/QTT format,
the system of discrete equations takes a form
\begin{equation*}\label{par:crank}
( I + \frac{\tau}{2} A ) y_{k+1} = ( I - \frac{\tau}{2} A) y_k + \frac{\tau}{2} (f_k+f_{k+1})=: F_{k+1},
\quad k=0,1,...,N_t.
 \end{equation*}

Two strategies suited for tensor methods can be applied:

{ (A)} Time stepping by DMRG-TT iteration for 
$$( I + \frac{\tau}{2} A )y_{k+1} =F_{k+1}.$$ 

{ (B)} Global {$O(\log N_x \log N_t)$} block solver in QTT format: 
$$
y_{k+1}-y_k + \frac{\tau}{2}A y_{k+1} + \frac{\tau}{2}A y_k = \frac{\tau}{2} (f_k+f_{k+1}).
$$

We follow the second approach that means
solving the huge global $N_x^d\times N_t$ system in QTT-Tucker format,
\begin{equation*}
\begin{bmatrix}
I + \frac{\tau}{2}A \\
-I + \frac{\tau}{2}A & I + \frac{\tau}{2}A \\
& \ddots & \ddots \\
& & -I + \frac{\tau}{2}A & I + \frac{\tau}{2}A
\end{bmatrix}
\begin{bmatrix}
y_1 \\ y_2 \\ \vdots \\ y_{N_t}
\end{bmatrix}
=
\begin{bmatrix}
\left(I - \frac{\tau}{2} A \right) y^0 \\ 0 \\ \vdots \\ 0
\end{bmatrix}
+ \frac{\tau}{2}
\begin{bmatrix}
f_0+f_1 \\ f_1+f_2 \\ \vdots \\ f_{N_t-1}+f_{N_t}
\end{bmatrix}.
\label{par:global_system}
\end{equation*}
The solution process can be based on either preconditioned iteration applied to symmetrized system
of equations or ALS-type iteration applied to the initial non-symmetric system.
Our numerical results are based on the second approach in the form of the so-called AMEn iteration
\cite{DoSav_AMEn:13}.

\vspace{-0.3cm}
\begin{center}
\begin{figure}[h!]
\begin{minipage}[t]{0.9\linewidth}
\centering
\caption{Cascade signaling network}
\label{fig:cascade}
\begin{tikzpicture}
\node [circle, draw] (s1) at (0,0) {$S_1$};
\node [circle, draw] (s2) at ($(s1)+(1.5,0)$) {$S_2$};
\node  (ccc) at ($(s2)+(1.2,0)$) {$\cdots$};
\node [circle, draw] (sd) at ($(ccc)+(1.2,0)$) {$S_d$};

\draw[sloped,semithick,->] (s1) to [out=60,in=120] (s2);
\draw[sloped,semithick,->] (s2) to [out=60,in=120] (ccc);
\draw[sloped,semithick,->] (ccc) to [out=60,in=120] (sd);
\draw[sloped,semithick,->] (s1) to [out=-135,in=180] ($(s1)+(0,-0.8)$) to [out=0,in=-45] (s1);
\draw[sloped,semithick,->] (s2) to [out=-135,in=180] ($(s2)+(0,-0.8)$) to [out=0,in=-45] (s2);
\draw[sloped,semithick,->] (sd) to [out=-135,in=180] ($(sd)+(0,-0.8)$) to [out=0,in=-45] (sd);
\end{tikzpicture}
\end{minipage}
%
\end{figure}
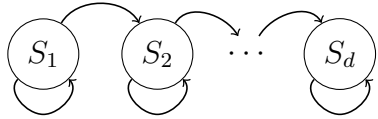
\end{center}
\vspace{-0.3cm}

We present numerical tests for high-dimensional cascade problem from 
\cite{Ammar-cme-2011,hegland-cme-2007}, 
which occurs when adjacent genes produce proteins which influence on the expression of a 
succeeding gene, see Fig. \ref{fig:cascade}. Tensor properties of this model, 
including tensor ranks of CME matrices,  
have been analyzed theoretically and numerically in \cite{DoKh_CME_NLAA:13}. 
\begin{itemize}
 \item $d=20$, $M=40$;
 \item for $m=1$: $w^{m}(\mathbf{x}) = 0.7$, $\mathbf{z}^{m} = -\delta_m$: generation 
 of the first protein;
 \item for $m=2,...,20$: $w^{m}(\mathbf{x}) = \dfrac{x_{m-1}}{5+x_{m-1}}$, 
 $\mathbf{z}^{m} = -\delta_m$: succeeding creation reactions;
 \item for $m=21,...,40$: $w^{m}(\mathbf{x}) = 0.07 \cdot x_{m-20}$, 
 $\mathbf{z}^{m} = \delta_{m-20}$: destruction reactions.
\end{itemize}
Here $\delta_m$ is the $m$-th identity vector,
$N_i=63$, hence the full grid problem size is about $64^{20}$.
The linear QTT format was used for state and time. The dynamical problem was solved 
until $T=400$ via the restarted global state-time solver after each $T_0=15$ time sub-intervals.
This means that the global state-time solver applies to  $(x,t)$ discretization 
successively on each coarse time interval of size $T_0=15$, so that number of intervals
is $T/T_0$. 
The initial state was chosen by $P(0)=\delta_1 \otimes \cdots \otimes \delta_1$, i.e. 
all copy numbers are zeros. The solution threshold in the discrete $L_2$norm 
for the QTT-rank truncation was chosen as $\varepsilon= 10^{-5}$.

Fig. \ref{fig:20d:meanconc} illustrates the $\log N_t$ scaling of the solution time 
for the varying number of time steps, $N_t=2^8,2^9,...,2^{14}$.
Fig. \ref{fig:20d:|Au|} shows the convergence of the transient solution to stationary one.
This confirms that the chosen time interval $T$ is large enough (long-time dynamics)
to catch the steady state with the required precision.

\begin{center}
\begin{figure}[h!]
\begin{minipage}[t]{0.4\linewidth}
\centering
\includegraphics[width=\linewidth]{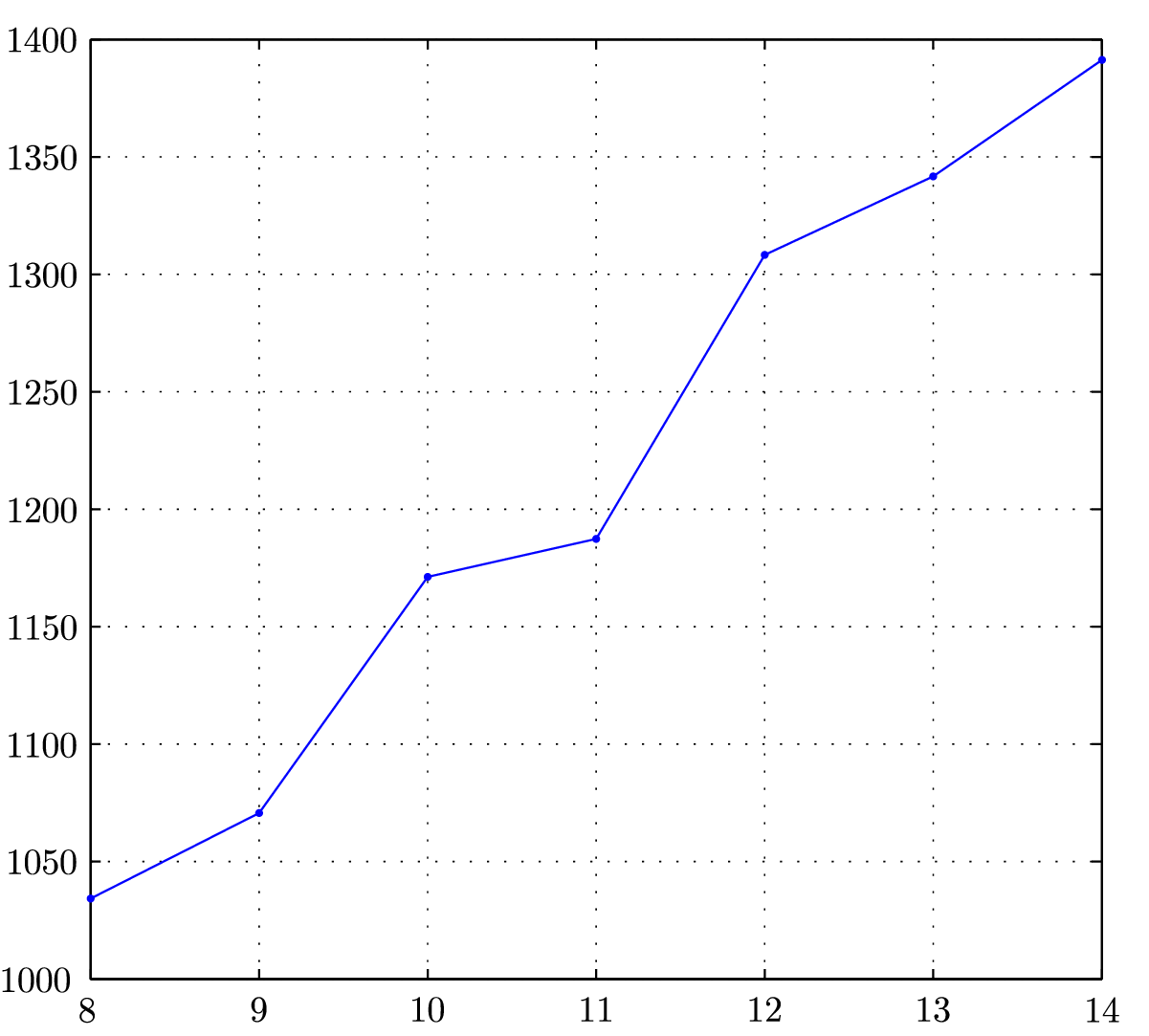}
\caption{\tiny CPU time (sec.) vs. $\log N_t$.}
\label{fig:20d:meanconc}
\end{minipage}
\quad \quad
\begin{minipage}[t]{0.4\linewidth}
\centering
\includegraphics[width=\linewidth]{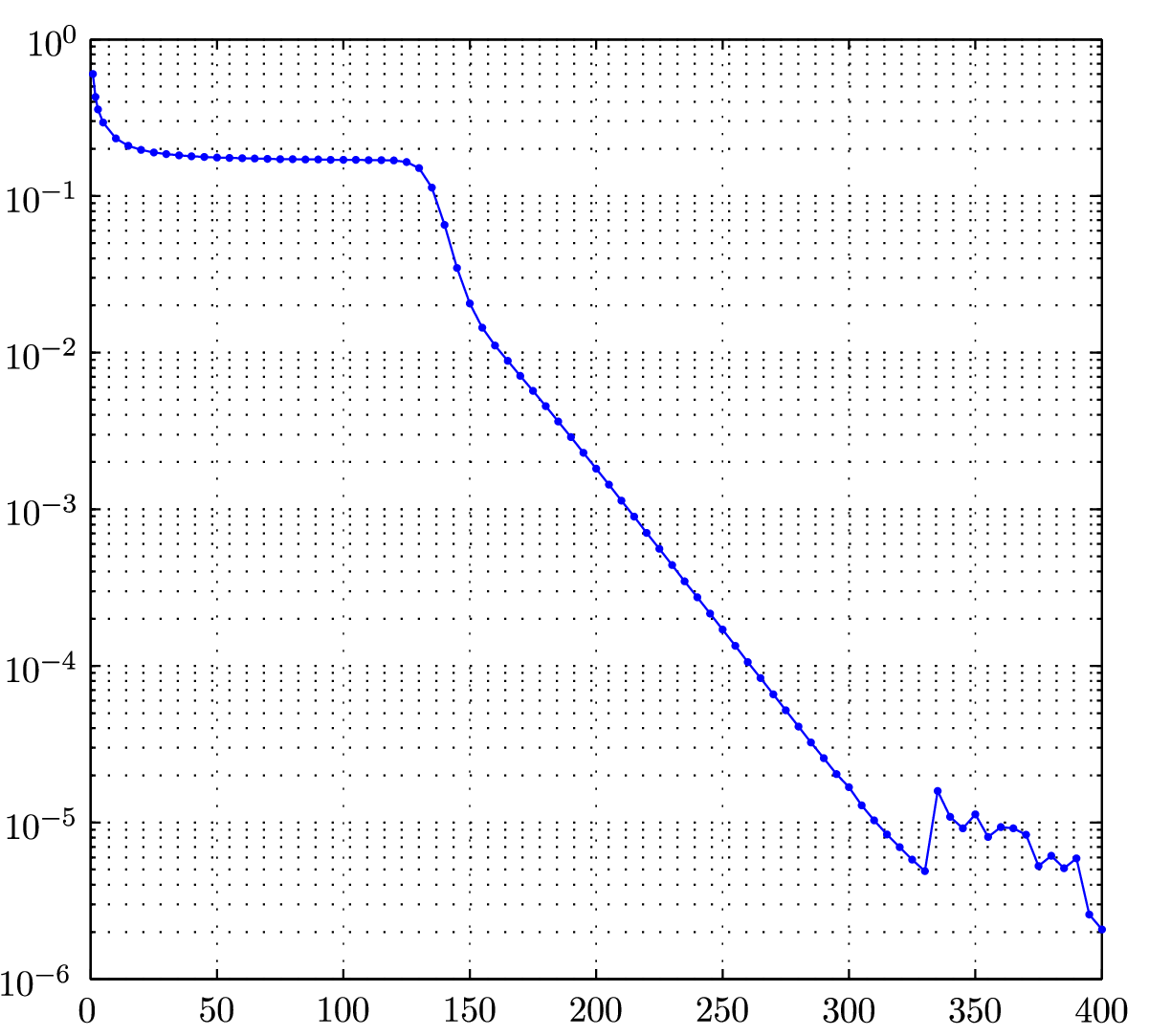}
\caption{\tiny Closeness to the kernel $\dfrac{||AP||}{||P||}(t)$}
\label{fig:20d:|Au|}
\end{minipage}
\end{figure}
\end{center}

We conclude that presented results demonstrate the high performance of QTT- and QTT-Tucker
formats in tensor computations for multi-dimensional CMEs.
For many interesting cases the theoretical rank bounds for the CME matrices have been proven
\cite{DoKh_CME_NLAA:13}.
Numerical tests confirm the logarithmic complexity scaling of the global space-time
tensor schemes in both time and space discretization parameters,
indicating the potential advantages of this approach for high-dimensional dynamical simulations.

\section{Conclusions}\label{sec:Conclusion}

The main ingredients and favorable features of the modern {\it tensor numerical methods} in applications to 
multidimensional PDEs have been discussed.
We focused on the recent finding of  quantized tensor approximation
that allows to represent discrete multivariate functions and operators on 
large $d$-dimensional grids of size $N^d$, with log-volume complexity, $O(d \log N)$.
We show how this approach can be applied to the 
low-parametric representation of functions and operators 
on the examples of nonlinear Hartree-Fock and real-time dynamical master equations.

Other examples on successful applications of tensor numerical methods
to parametric/stochastic PDEs, time-dependent parabolic equations, 
multidimensional eigenvalue problems, the fast lattice summation schemes, superfast data 
transforms and to integration of singular and highly oscillating functions can be found in 
references provided.


\vspace{0.5cm}
{\bf Acknowledgements.} The author gratefully acknowledge Dr. V. Khoromskaia 
(MPI MiS Leipzig) for useful comments on Section \ref{sec:TS-formats} 
and \S\ref{ssec:HaFockeqn} which have led to the substantial improvement 
of the revised manuscript.

%
%


\end{document}